\documentclass[11pt]{amsart}
\usepackage{graphicx}
\usepackage{enumerate}
\usepackage{amsmath, amsfonts, amsthm}
\usepackage{amssymb}
\usepackage{shuffle}
\usepackage{color}
\usepackage{todonotes}
\presetkeys%
    {todonotes}%
    {inline,backgroundcolor=yellow}{}
\numberwithin{equation}{section}

\newcommand{\comm}[1]{}
\newtheorem{theorem}{Theorem}
\newtheorem{definition}[theorem]{Definition}

\newtheorem{remark}[theorem]{Remark}

\newtheorem{example}[theorem]{Example}

\numberwithin{theorem}{section}
\newtheorem*{acknowledgement}{Acknowledgement}

\newtheorem{thm}[equation]{Theorem}
\newtheorem{cor}[equation]{Corollary}
\newtheorem{lem}[equation]{Lemma}
\newtheorem{prop}[equation]{Proposition}

\theoremstyle{remark}

\newcommand{\R}{\mathbb{R}}
\newcommand{\N}{\mathbb{N}}

\newcommand{\cP}{\mathcal{P}}
\newcommand{\cI}{\mathcal{I}}

\newcommand{\cL}{\mathcal{L}}
\newcommand{\var}{\text{var}}

\newcommand{\floor}[1]{\lfloor {#1} \rfloor}

\title{Signature inversion of $C^1-$axial linear curves }
\author{Chong Liu and Shi Wang}
\date{\today}

\begin{document}

\begin{abstract}
We introduce a signature inversion scheme for $C^1$-axial linear curves which are widely used in various areas. We show that in the presence of a linear coordinate function, the derivatives of the underlying curve at any point $x$ can be recovered by tracking the signature coefficients $S_{k,l}$ with $\frac{k}{k+l} \to x$. We furthermore give a quantitative estimates for the convergence rate in this inversion scheme and establish a modulus of continuity of the signature inverse $S^{-1}$ under different topologies by using this inversion procedure.
\end{abstract}

\maketitle

\section{Introduction}

For a given $\R^d$-valued $C^1$ curve $\gamma:[a,b] \to \R^d$ with coordinate functions given by $\gamma(t) = (x_1(t), \ldots, x_d(t))$, its signature $S(\gamma)$ is defined as the following formal series in the (completed) tensor algebra $T((\R^d)) = \prod_{n=0}^\infty (\R^d)^{\otimes n}$:
$$
S(\gamma) = 1 + \sum_{n=1}^\infty S^{(n)}(\gamma), 
$$
where $S^{(n)}(\gamma) = \sum_{i_1,\ldots,i_n=1}^d S^{(n)}_{i_1,\ldots,i_n}(\gamma)e_{i_1} \otimes \cdots \otimes e_{i_n} \in (\R^d)^{\otimes n}$ with
$$
S^{(n)}_{i_1,\ldots,i_n}(\gamma) = \int_{a<t_1<\ldots<t_n<b} x^\prime_{i_1}(t_1) \ldots x^\prime_{i_n}(t_n) dt_1 \ldots dt_n
$$
being the $n$-th iterated integrals of $\gamma$.  

One of the most prominent property of the signature map $$S: C^1([a,b], \R^d) \to T((\R^d))
$$
is that it is injective up to the so-called tree-like equivalence: for two $C^1$-paths $\gamma_1$ and $\gamma_2$, they have the same signature $S(\gamma_1) = S(\gamma_2)$ if and only if they are tree-like (see \cite{Chen58}, \cite{hambly2010uniqueness} for smooth curves and \cite{BGLY16} for the general rough path case). In other words, if we use $\sim_{tree}$ to denote the tree-like equivalence relation, and let $C^1([a,b],\R^d)/\sim_{tree}$ be the corresponding quotient space of $C^1([a,b],\R^d)$ modulo $\sim_{tree}$, the signature $S: C^1([a,b],\R^d)/\sim_{tree} \to T((\R^d))$ defined on this quotient space is one-to-one. 


A simple but important example of the tree-like equivalence is the time reparameterization.
In fact, in the seminal paper \cite{BGLY16}, the authors proved a fundamental result which states that 
for certain class of curves (which are called tree-reduced curves therein), time reparameterization is the only possible tree-like relation for having the same signature. For the particular use in the present paper, we mention a special case of this result below:

\begin{definition}\label{def: axial monotone paths}
Let $\gamma:[a,b]\rightarrow \mathbb R^d$ be a $C^1$-curve with coordinate functions given by
\[\gamma(t)=\left(x_1(t),\cdots,x_d(t)\right).\]
For a given index $i\in \{1,\cdots,d\}$, we say $\gamma$ is $x_i$-monotone ($x_i$-increasing) if $x_i'(t)\neq 0$ ($x_i'(t)>0$) for any $t\in [a,b]$. We say that a $C^1$ curve is axial monotone if there is a parameterization under which the curve is $x_i$-monotone for some (but not necessarily for all) $i\in\{1, \cdots, d\}$. 
\end{definition}

\begin{thm}[a corollary of Lemma 4.6 in \cite{BGLY16}]\label{thm: axial monotone paths are tree reduced}
For two axial monotone $C^1$-curves $\gamma_1$ and $\gamma_2$ from $[a,b]$ to $\R^d$, they have the same signature $S(\gamma_1) = S(\gamma_2)$ if and only if $\gamma_1$ is a reparameterization of $\gamma_2$.
\end{thm}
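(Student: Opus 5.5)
The plan is to derive the statement from the uniqueness theorem of \cite{BGLY16}: two tree-reduced curves with the same signature differ only by a reparameterization. What needs checking is that an axial monotone $C^1$-curve is tree-reduced, i.e.\ it contains no tree-like piece; this is the content of Lemma 4.6 there. The ``if'' direction is routine. If $\gamma_1=\gamma_2\circ\phi$ with $\phi$ an increasing $C^1$ (or absolutely continuous) bijection, then substituting $t_j=\phi(s_j)$ in each iterated integral $S^{(n)}_{i_1,\ldots,i_n}$ preserves the simplex $a<t_1<\cdots<t_n<b$ and turns $x'_{i}(\phi(s))\phi'(s)\,ds$ into the derivative of $x_i\circ\phi$. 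Hence every coefficient agrees and $S(\gamma_1)=S(\gamma_2)$.

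For the ``only if'' direction, I first show tree-reducedness. Take $\gamma$ to be $x_i$-monotone in a suitable parameterization; since $x_i'$ is continuous and never zero, $x_i$ is strictly monotone, say increasing. Suppose some nontrivial subpath $\gamma|_{[s,t]}$, $s<t$, were tree-like. Then there is a height function $h\ge 0$ with $h(s)=h(t)=0$ and $|\gamma(u)-\gamma(v)|\le h(u)+h(v)-2\inf_{[u,v]}h$. Taking $u=s$, $v=t$ gives $\gamma(s)=\gamma(t)$, so $x_i(s)=x_i(t)$, which contradicts strict monotonicity. Therefore no nontrivial subpath is tree-like, and the curve is tree-reduced in the sense of \cite{BGLY16}.

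Next I would invoke the main result of \cite{BGLY16}. Every bounded variation path is tree-like equivalent to a unique, up to reparameterization, tree-reduced path, and $S(\gamma_1)=S(\gamma_2)$ means that $\gamma_1\sim_{tree}\gamma_2$. Both curves are tree-reduced, so each is its own reduced representative. Uniqueness then yields $\gamma_1=\gamma_2\circ\phi$ for some increasing homeomorphism $\phi$ of $[a,b]$.

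The step I expect to be the main obstacle is making $\phi$ as regular as ``reparameterization'' should mean in the $C^1$ category. The axial coordinate handles this: in the monotone parameterizations we have $x^{(1)}_i=x^{(2)}_i\circ\phi$, where the superscript indicates the curve, so $\phi=(x^{(2)}_i)^{-1}\circ x^{(1)}_i$. Both maps here are $C^1$ with nonvanishing derivative, so by the inverse function theorem $\phi$ is a $C^1$ diffeomorphism. One point to watch is whether both curves are monotone in the same coordinate $i$. If $\gamma_1$ is $x_i$-monotone and $\gamma_2$ is $x_j$-monotone, the relation $\gamma_1=\gamma_2\circ\phi$ with $\phi$ a homeomorphism shows that $x^{(2)}_i$ is also strictly monotone. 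However, it need not have a nonvanishing derivative, so I would instead use the coordinate $i$ for $\gamma_1$ and the coordinate $j$ for $\gamma_2$ to obtain the $C^1$ regularity of $\phi$ and of $\phi^{-1}$ separately. Alternatively, I would simply accept homeomorphic reparameterizations, which is all the statement requires.
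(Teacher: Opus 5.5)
Your proposal is correct and follows the paper's route: the paper gives no argument beyond citing \cite{BGLY16}, and you unpack that citation by noting that a tree-like subpath $\gamma|_{[s,t]}$ must satisfy $\gamma(s)=\gamma(t)$, which strict monotonicity of $x_i$ rules out, and then applying uniqueness of tree-reduced representatives. One small caveat, which comes from the statement itself: that uniqueness, like the signature, only sees curves up to translation, so your conclusion $\gamma_1=\gamma_2\circ\phi$ should be read in $\mathcal P^1(\R^d)$ (curves modulo constants), as the paper does after its preliminaries.
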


An important subclass of axial monotone curves are axial linear curves: we call a $C^1$ curve $\gamma(t)=\left(x_1(t),\cdots,x_d(t)\right), t \in [a,b]$ is axial linear if there is an index $i \in \{1,\cdots,d\}$ such that its $i$-th coordinate function $x_i(t)$ is linear, and in this case we will call such $\gamma$ an $x_i$-linear curve.
Clearly,  every $x_i$-monotone curve admits a  reparameterization (induced by the inverse $x_i^{-1}$) which is $x_i$-linear. Moreover, by Theorem \ref{thm: axial monotone paths are tree reduced}, it is easy to see that for two $x_i$-linear curves $\gamma_1$ and $\gamma_2$, they have the same signature iff $\gamma_1(t) = \gamma_2(t)$ holds for all $t \in [a,b]$. In other words, if $\cL^{(i)}(\R^d)$ denotes the set of all $x_i$-linear $C^1$ curves, then the tree-like equivalence relation on $\cL^{(i)}(\R^d)$ is trivial and the signature $S: \cL^{(i)}(\R^d) \to T((\R^d))$ is one-to-one.

The above fact has a significant implication when the signature theory applies to mathematical finance and statistics, where the time series are usually sensitive for time reparameterization and therefore people need to get rid of the tree-like equivalence/reparameterization when the signature is used as a good feature map to distinguish different time series.  In fact, it is almost a common convention that people often first extend an $\R^m$-valued curve $x(t) = (x_1(t), \cdots, x_m(t))$ to an $\R^{m+1}$-valued path $\tilde x(t) = (t, x_1(t), \cdots, x_m(t))$ by adding an time component as the first coordinate, so that $\tilde x \in \cL^{(1)}(\R^{m+1})$ is an axial linear process, and then define a ``time-augmented'' signature of the original curve $x$, denoted by $\tilde S(x) = S \circ \tilde x$, as the signature of the time-augmented path $\tilde x$, so that the mapping $\tilde S$ is injective (instead of modulo the tree-like equivalence) on the $\R^m$-valued path space. We refer readers to e.g. \cite{levin2013learning}, \cite{Kalsi2020Optimal}, \cite{LyonsMcleod2022}, \cite{Dupire2022},  \cite{TD2022}, \cite{bonnier2020adapted}, \cite{Bayer2021stopping}, \cite{CuchieroGazzani2023}, \cite{Cuchiero2025}, \cite{Guo2025}, \cite{Proemel2026} for seeing how people apply the signatures of axial linear (random) curves (that is, the time-augmented curve $\tilde x$ as above) to solve various problems in mathematical finance, machine learning and statistics.

Given the importance of the signature of axial linear /monotone curves mentioned above, we have enough  motivation to study its properties. In particular, we are interested in solving the \textit{signature inversion problem} of axial linear/monotone curves. To be precise, the signature inversion problem concerns the reconstruction of a curve $\gamma$ in the quotient space $C^1([a,b],\R^d)/\sim_{tree}$ (which is  parameterized by the arc length) from its signature $S(\gamma)$; or, in other words, we aim to find a clear description of the inverse of signature $$S^{-1}: S(C^1([a,b],\R^d)) \to  C^1([a,b],\R^d)/\sim_{tree}.
$$
Such an inversion problem is widely considered as one of the most important and challenging problems in the signature theory, and lots of advanced techniques were exploited to attack this hard question. We refer readers to \cite{hambly2010uniqueness}, \cite{LyonsXu2015inversion} for a geometric perspective based on the hyperbolic/Cartan development, \cite{LyonsXu2018inversion} for the symmetrization method, \cite{XuNi2017} for a probabilistic inversion scheme and \cite{BoeGeng2015}, \cite{Geng17} for a polygonal approximation via a careful geometric partition of the target space $\R^d$.

The main purpose of this draft is to give an explicit signature inversion for any $C^1$ axial linear/monotone curve. We will show that the presence of a linear coordinate function does not only exclude the tree-like equivalence (see our above discussions), but also simplifies the signature inversion procedure significantly.  Here we state our main theorem for $\R^2$-valued axial linear curves (whose proof is given in Section \ref{sect: inversion of axial linear curves}), which already illustrates the main idea.


\begin{thm}\label{cor:main}
    Let $\gamma:[0,1]\rightarrow \mathbb R^2$ be a $C^1$ curve given by
\[\gamma(x)=\left(C x,y(x)\right),x\in [0,1],\]
where $C>0$ is a constant. Denote $e_1=(1,0), e_2=(0,1)$ the standard basis in $\mathbb R^2$, $S_{k,l} := S^{(k+l+1)}_{i_1,\ldots,i_k,2,j_{1},\ldots,j_{l}}$ with $i_1=\cdots=i_k=j_1=\cdots=j_l=1$ the coefficient of the signature $S(\gamma)$ with respect to the $(k+l+1)$-tensor $e_1^{\otimes k}\otimes e_2\otimes e_1^{\otimes l}$, and $S^{(1)}_1$ is the coefficient of $S(\gamma)$ with respect to $e_1$. Then we have
\begin{enumerate}
    \item for any $x\in [0,1]$, if $(p_n,q_n)\in \mathbb N\times \mathbb N^*$ is a rational approximation of $x$ in the sense that $p_n/q_n \to x$ and $q_n \to +\infty$ as $n \to \infty$, then $y'(x)$ can be recovered via the following explicit form
    \begin{equation}\label{eq: recovery formula}
        y'(x)=\lim_{n\rightarrow \infty}\frac{(q_n+1)!\cdot S_{p_n,q_n-p_n}}{(S^{(1)}_1)^{q_n}}.
    \end{equation}

    \item if $(p_n,q_n):[0,1]\to \mathbb N\times \mathbb N^*$ is a uniform rational approximation in the sense that $\sup_{x \in [0,1]}\bigg|\frac{p_n(x)}{q_n(x)} - x\bigg| \to 0$ and $\inf_{x \in [0,1]}q_n(x) \to +\infty$ as $n\to \infty$, then the above limit is uniform, that is,
    \[\lim_{n\to \infty}\sup_{x\in [0,1]}\Bigg|y'(x)-\frac{(q_n(x)+1)!\cdot S_{p_n(x),q_n(x)-p_n(x)}}{(S^{(1)}_1)^{q_n(x)}}\Bigg|=0.\]
\end{enumerate}
\end{thm}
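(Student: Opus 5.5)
Compute $S_{k,l}$ explicitly. With $x_1(t)=Ct$ we have $x_1'=C$, $x_2'=y'$, and $S^{(1)}_1=C$. Integrating out the $e_1$ variables in the simplex gives
\[
S_{k,l}=\int_0^1 \frac{(Ct)^k}{k!}\,y'(t)\,\frac{(C(1-t))^l}{l!}\,dt .
\]
This holds because the $k$ indices before the $e_2$ slot range over the simplex in $[0,t]$, with volume $t^k/k!$, and the $l$ indices after it range over the simplex in $[t,1]$, with volume $(1-t)^l/l!$. Taking $k=p$ and $l=q-p$ yields
\[
\frac{(q+1)!\,S_{p,q-p}}{C^{q}}=\int_0^1 y'(t)\,\rho_{p,q}(t)\,dt,\qquad \rho_{p,q}(t)=(q+1)\binom{q}{p}t^p(1-t)^{q-p}.
\]
Here $\rho_{p,q}$ is exactly the Beta$(p+1,q-p+1)$ probability density: it is nonnegative and integrates to $1$, since $\int_0^1 t^p(1-t)^{q-p}dt=p!(q-p)!/(q+1)!$. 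So the quantity in \eqref{eq: recovery formula} is the expectation $\mathbb E[y'(T_{p,q})]$ with $T_{p,q}\sim\mathrm{Beta}(p+1,q-p+1)$.

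The statement thus reduces to a Bernstein / Beta-kernel approximation argument. The mean of $T_{p,q}$ is $\frac{p+1}{q+2}$, and its variance is
\[
\frac{(p+1)(q-p+1)}{(q+2)^2(q+3)}\le \frac{1}{4(q+3)}.
\]
Write
\[
\Big|\int y'\rho-y'(x)\Big|\le\int|y'(t)-y'(x)|\rho(t)\,dt .
\]
Fix $\varepsilon>0$ and choose $\delta$ from the uniform continuity of $y'$ on $[0,1]$. Split the integral into $|t-x|<\delta$, which contributes at most $\varepsilon$, and $|t-x|\ge\delta$, which contributes at most $2\|y'\|_\infty\,\mathbb P(|T-x|\ge\delta)$. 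For the second piece use Chebyshev:
\[
\mathbb P(|T-x|\ge\delta)\le\frac{\mathbb E|T-x|^2}{\delta^2}=\frac{\mathrm{Var}(T)+\big(\tfrac{p+1}{q+2}-x\big)^2}{\delta^2}.
\]

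To control the bias, note
\[
\Big|\tfrac{p+1}{q+2}-x\Big|\le\Big|\tfrac pq-x\Big|+\Big|\tfrac{p+1}{q+2}-\tfrac pq\Big|\le\Big|\tfrac pq-x\Big|+\tfrac{2}{q+2},
\]
using $0\le p\le q$ (implicit in $S_{p,q-p}$ being defined). Both terms tend to $0$ along $(p_n,q_n)$, which proves part (1). For part (2), every bound above depends on $x$ only through $\sup_x|p_n(x)/q_n(x)-x|$ and $\inf_x q_n(x)$, while $\delta$ and $\|y'\|_\infty$ are uniform. The estimate is therefore uniform in $x$, giving the uniform limit.

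The step I expect to need the most care is the first one: the exact evaluation of the iterated integral as the Beta-kernel integral, and checking that the normalization $(q+1)!/C^q$ matches the kernel's total mass. The remainder is a standard concentration estimate. If a rate is wanted, replacing uniform continuity by the modulus of continuity $\omega_{y'}$ gives an error bound of order $\omega_{y'}(\delta)+\|y'\|_\infty\big(q^{-1}+|p/q-x|^2\big)\delta^{-2}$.
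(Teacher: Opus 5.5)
Your proof is correct, and its skeleton is the paper's. Proposition~\ref{prop:S_kl} rewrites $(q+1)!\,S_{p,q-p}/C^{q}$ as $\int_0^1\rho_{p,q-p}(s)y'(s)\,ds$, and Theorem~\ref{thm:convergence} is then proved by splitting $[0,1]$ according to the uniform continuity of $y'$.

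The genuine difference is the tail estimate. You control the mass of $Be(p+1,q-p+1)$ away from $x$ by Chebyshev, using the exact mean and variance. The paper instead proves the pointwise density bound of Proposition~\ref{prop:key}, $\rho_{k,l}(s)\le 3n^{3/2}e^{-n^{2\epsilon_0}/18}$ for $|s-k/n|\ge n^{-1/2+\epsilon_0}$. It does this via Stirling (Lemma~\ref{lem:rhomax}), unimodality (Lemma~\ref{lem:rho}), and a Taylor expansion of $F(r,s)$ with a case analysis.

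For the qualitative theorem your route is strictly simpler. There are no asymptotics and no cases; the edge cases $p=0$ and $p=q$ need no separate treatment. Uniformity in $x$ is immediate, because your bound depends on $x$ only monotonically through $|p_n(x)/q_n(x)-x|$ and $q_n(x)$.

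What the paper's bound buys is a super-polynomially small tail even on the shrinking windows $\delta_n=2q_n^{-1/2+\epsilon_0}$. Its splitting argument in Theorem~\ref{thm:quantitative-convergence} needs this to reach the rate $q_n^{(-1/2+\epsilon_0)\alpha}$. Your split bound $\omega_{y'}(\delta)+\|y'\|_\infty(q^{-1}+|p/q-x|^2)\delta^{-2}$, optimized for $\alpha$-H\"older $y'$ with $|p/q-x|<q^{-1/2}$, only yields $q^{-\alpha/(\alpha+2)}$.

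You can close that gap by skipping the split and using Jensen under the same hypothesis $|p/q-x|<q^{-1/2}$: $\int_0^1\rho_{p,q-p}(t)\,|f(t)-f(x)|\,dt\le\|f\|_\alpha\bigl(\mathbb E|T-x|^2\bigr)^{\alpha/2}=O(q^{-\alpha/2})$ for $\alpha$-H\"older $f$. So your second-moment computation recovers the paper's quantitative rate, and even removes the $\epsilon_0$ loss.
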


Since axial monotone curves can be expressed as axial linear curves by reparameterization, the above result allows us to reconstruct axial monotone curves up to time reparameterization, which is recorded into the following main theorem.

\begin{thm}\label{thm:main}
Let $\gamma:[a,b]\rightarrow \mathbb R^d$ be a $C^1$ $x_i$-increasing curve for some index $i\in \{1,\cdots, d\}$, given by
\[\gamma(t)=\left(x_1(t),\cdots,x_d(t)\right),t\in [a,b].\]
Suppose $S_{k,l}^{(j;i)}$ denotes the coefficient of the signature $S(\gamma)$ with respect to the $(k+l+1)$-tensor $e_i^{\otimes k}\otimes e_j\otimes e_i^{\otimes l}$, and $S_i$ the coefficient on the $1$-tensor $e_i$, then for any $x \in [0,1]$ and $t_x \in [a,b]$ with $x_i(t_x) = (1-x)x_i(a) + x x_i(b)$, if  $(p_n,q_n)\in \mathbb N\times \mathbb N^*$ is a rational approximation of $x$, then for any $j \neq i$, it holds that
$$
\frac{x_j'(t_x)}{x_i'(t_x)}=\lim_{n\rightarrow \infty}\frac{(q_n+1)!\cdot S_{p_n,q_n-p_n}^{(j;i)}}{(S^{(1)}_i)^{q_n+1}}.
$$
If $(p_n,q_n):[0,1]\to \mathbb N\times \mathbb N^*$ is a uniform rational approximation, the above limit can be taken uniformly over all $x \in [0,1]$.
\end{thm}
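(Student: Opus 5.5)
Theorem \ref{thm:main} reduces to Theorem \ref{cor:main} through reparameterization invariance of the signature and a projection onto two coordinates.

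\textbf{Step 1 (reparameterize).} Since $x_i$ is $C^1$ with $x_i'>0$, the map $x_i:[a,b]\to[x_i(a),x_i(b)]$ is a $C^1$ diffeomorphism. Set $C:=x_i(b)-x_i(a)=S^{(1)}_i>0$ and
\[\phi(x)=x_i^{-1}\bigl(x_i(a)+Cx\bigr),\qquad x\in[0,1],\]
so that $\phi(x)=t_x$. The curve $\tilde\gamma=\gamma\circ\phi$ is $C^1$ on $[0,1]$ and its $i$-th coordinate is $x_i(a)+Cx$. Iterated integrals of $C^1$ curves are invariant under increasing $C^1$ reparameterization (change of variables in each simplex integral). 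Hence $S(\tilde\gamma)=S(\gamma)$, and in particular every coefficient $S^{(j;i)}_{k,l}$ and $S_i$ is unchanged.

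\textbf{Step 2 (project to $\R^2$).} Fix $j\neq i$. Consider the planar curve
\[\hat\gamma(x)=\bigl(Cx,\;y(x)\bigr),\qquad y(x):=x_j(\phi(x)).\]
Translating a coordinate does not change the signature. The coefficient of $e_i^{\otimes k}\otimes e_j\otimes e_i^{\otimes l}$ in $S(\tilde\gamma)$ involves only the derivatives of the $i$-th and $j$-th coordinates. It therefore equals the coefficient of $e_1^{\otimes k}\otimes e_2\otimes e_1^{\otimes l}$ in $S(\hat\gamma)$, i.e. $S_{k,l}(\hat\gamma)$. Likewise $S^{(1)}_1(\hat\gamma)=C=S_i$.

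\textbf{Step 3 (apply the planar theorem).} Theorem \ref{cor:main}(1), applied to $\hat\gamma$, gives
\[y'(x)=\lim_{n\to\infty}\frac{(q_n+1)!\,S^{(j;i)}_{p_n,q_n-p_n}}{S_i^{\,q_n}}.\]
By the chain rule,
\[y'(x)=x_j'(\phi(x))\,\phi'(x)=\frac{C\,x_j'(t_x)}{x_i'(t_x)},\]
since $\phi'(x)=C/x_i'(\phi(x))$. Dividing both sides by $C=S_i$ gives the stated formula with denominator $S_i^{\,q_n+1}$.

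\textbf{Step 4 (uniformity).} The uniform version follows the same way from Theorem \ref{cor:main}(2), after dividing by the fixed constant $C$.

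\textbf{Main obstacle.} The argument itself is bookkeeping; the substance is contained in Theorem \ref{cor:main}. The delicate points are the coefficient identification in Step 2 and the normalization. One must check that the coefficient is defined exactly as in the planar theorem, and that the exponent of $S_i$ goes up by one because the limit recovers the ratio $x_j'/x_i'$ rather than $y'$ itself.
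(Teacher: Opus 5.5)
Your proposal is correct and follows essentially the same route as the paper: reparameterize by $x_i^{-1}$ composed with an affine map so that the $i$-th coordinate becomes linear (up to translation), project onto the $(x_i,x_j)$-plane to identify $S^{(j;i)}_{k,l}$ with the planar coefficient $S_{k,l}$, apply Theorem~\ref{cor:main}, and use the chain rule $y'(x)=S^{(1)}_i\,x_j'(t_x)/x_i'(t_x)$ to account for the extra power of $S^{(1)}_i$. The only cosmetic difference is that you handle a general index $i$ directly, whereas the paper takes $i=1$ and says the general case is analogous.
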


As a byproduct, we also obtain another way of recovering the length of an axial linear/monotone curve. 
\begin{cor}\label{cor:length}
    Let $\gamma:[a,b]\rightarrow \mathbb R^d$ be a $C^1$ $x_i$-increasing curve. Then the length of $\gamma$ can be recovered by the signature via
\[L(\gamma)=\lim_{n\rightarrow \infty}\sum_{k=1}^{n}\sqrt{\sum_{j=1}^d\left(S^{(j;i)}_{k,n-k}\right)^2
}\cdot\frac{n!}{(S^{(1)}_i)^n}.\]
\end{cor}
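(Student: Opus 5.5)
The plan is to reduce the corollary to the uniform part of Theorem~\ref{thm:main}, applied with the single index $q=n$, and then read the resulting expression as a Riemann sum. Since $\gamma$ is $x_i$-increasing, $S^{(1)}_i=x_i(b)-x_i(a)>0$, so every quantity below is well defined.

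\textbf{Step 1 (rewrite the summand).} For $j\neq i$, set
\[
R^{(j)}_{k,n}:=\frac{(n+1)!\,S^{(j;i)}_{k,n-k}}{(S^{(1)}_i)^{n+1}},
\qquad\text{so that}\qquad
S^{(j;i)}_{k,n-k}\cdot\frac{n!}{(S^{(1)}_i)^n}=\frac{S^{(1)}_i}{n+1}\,R^{(j)}_{k,n}.
\]
For $j=i$ the coefficient of $e_i^{\otimes(n+1)}$ in $S(\gamma)$ is $(S^{(1)}_i)^{n+1}/(n+1)!$, so $R^{(i)}_{k,n}=1=x_i'(t_x)/x_i'(t_x)$ exactly. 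The $n$-th term of the sequence in the corollary therefore equals
\[
\frac{S^{(1)}_i}{n+1}\sum_{k=1}^n \Big(\sum_{j=1}^d (R^{(j)}_{k,n})^2\Big)^{1/2}.
\]

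\textbf{Step 2 (uniform convergence of the summands).} Define the uniform rational approximation $q_n(x)=n$ and $p_n(x)=\floor{nx}$ (so $p_n(x)\le q_n(x)$). Then $\sup_x|p_n(x)/q_n(x)-x|\le 1/n$ and $\inf_x q_n(x)=n\to\infty$. By the uniform statement of Theorem~\ref{thm:main}, with $\varepsilon_n\to0$,
\[
\sup_{x\in[0,1]}\Big|R^{(j)}_{p_n(x),n}-\tfrac{x_j'(t_x)}{x_i'(t_x)}\Big|\le\varepsilon_n \quad\text{for all } j\neq i.
\]
Evaluating at $x=k/n$, where $p_n(k/n)=k$, and using $|\,\|u\|-\|v\|\,|\le\|u-v\|$ in $\R^d$, we get
\[
\max_{1\le k\le n}\Big|\Big(\sum_j (R^{(j)}_{k,n})^2\Big)^{1/2}-F(k/n)\Big|\le \sqrt{d}\,\varepsilon_n,
\qquad
F(x):=\frac{|\gamma'(t_x)|}{x_i'(t_x)}.
\]
The map $x\mapsto t_x$ is the inverse of the continuous, strictly increasing function $t\mapsto (x_i(t)-x_i(a))/S^{(1)}_i$, so it is continuous. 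Hence $F$ is continuous on $[0,1]$, since $x_i'>0$ on the compact interval $[a,b]$.

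\textbf{Step 3 (Riemann sum and change of variables).} By Step 2, the $n$-th term differs from $\frac{S^{(1)}_i}{n+1}\sum_{k=1}^n F(k/n)$ by at most $\frac{n}{n+1}S^{(1)}_i\sqrt d\,\varepsilon_n\to0$. Because $F$ is continuous and $\frac{n}{n+1}\to1$,
\[
\frac{1}{n+1}\sum_{k=1}^nF(k/n)\to\int_0^1F(x)\,dx.
\]
Finally substitute $x=(x_i(t)-x_i(a))/S^{(1)}_i$, so that $dx=x_i'(t)\,dt/S^{(1)}_i$. This gives
\[
S^{(1)}_i\int_0^1F(x)\,dx=\int_a^b|\gamma'(t)|\,dt=L(\gamma).
\]

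The only delicate point is Step 2. One has to check that the $n$ evaluation points $k/n$ are all controlled by a \emph{single} uniform error $\varepsilon_n$; this is exactly why the uniform version of Theorem~\ref{thm:main} is needed, since pointwise convergence alone would not control a sum of $n$ terms. The choice $p_n=\floor{nx}$, $q_n\equiv n$ handles this. The rest is routine.
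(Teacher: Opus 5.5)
Your proof is correct and follows essentially the same route as the paper. You apply the uniform part of Theorem~\ref{thm:main} with the natural approximation $(\floor{nx},n)$, evaluate at the grid points $k/n$, and recognize a Riemann sum for the length. The differences are only presentational: you stay in the original parameterization and finish with the change of variables $x=(x_i(t)-x_i(a))/S^{(1)}_i$, whereas the paper reparameterizes to $x_i(s)=C_0 s$ at the start, and you handle the $j=i$ term (where $R^{(i)}_{k,n}=1$) and the factor $\frac{n}{n+1}$ explicitly, which the paper's final step leaves implicit.
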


A remarkable advantage of our signature inversion scheme is that we can easily obtain a quantitative bound on the approximation error between $x^\prime_j(x)$ and the ratio $\frac{(q_n+1)!\cdot S_{p_n,q_n-p_n}}{(S^{(1)}_i)^{q_n}}$ consisting of its signature components, provided the derivatives of $\gamma$ are H\"older continuous:

\begin{thm}\label{thm:effective bound on inversion}[see Theorem \ref{thm:quantitative}]
Let $\alpha \in (0,1]$. Assume that $\gamma \in \cL^{(i)}([0,1],\R^d)$ such that its derivative $\gamma^\prime$ is $\alpha$-H\"older continuous. Then for any $x \in [0,1]$, any $\epsilon_0 \in (0,\frac{1}{2})$, any $j \neq i$ and any rational approximation $(p_n,q_n)$ of $x$ satisfying
\begin{equation}\label{eq:approx rate of x}
    \bigg|\frac{p_n}{q_n} - x\bigg| < q_n^{-\frac{1}{2}}, \quad q_n \to +\infty
\end{equation}
as $n \to \infty$, there exists a constant $C_1$ that only depends on $\epsilon_0$, $\alpha$, $\|\gamma^\prime\|_\infty$ and $\|\gamma^\prime\|_\alpha$ (the $\alpha$-H\"older norm of $\gamma^\prime$) such that for large enough $n$,
$$
\bigg| x^\prime_j(x) - \frac{(q_n+1)!\cdot S_{p_n,q_n-p_n}}{(S^{(1)}_i)^{q_n}} \bigg|\le C_1 q_n^{(-\frac{1}{2}+\epsilon_0)\alpha}.
$$
\end{thm}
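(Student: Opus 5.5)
We first write the signature coefficient as an explicit integral against a Beta-type density. Work with $\gamma(x)=(\dots)$ parameterized on $[0,1]$ with $x_i(x)=Cx$, so $S^{(1)}_i=C$. Every letter in the word $e_i^{\otimes k}\otimes e_j\otimes e_i^{\otimes l}$ except one integrates against $C\,dt$. Integrating out the $k$ variables below $t$ and the $l$ variables above $t$ gives
\[
S_{k,l}=\int_0^1 \frac{(Ct)^k}{k!}\,x_j'(t)\,\frac{(C(1-t))^l}{l!}\,dt .
\]
With $k=p_n$, $l=q_n-p_n$, this becomes
\[
\frac{(q_n+1)!\,S_{p_n,q_n-p_n}}{C^{q_n}}=\int_0^1 x_j'(t)\,\rho_n(t)\,dt,\qquad \rho_n(t)=\frac{(q_n+1)!}{p_n!(q_n-p_n)!}t^{p_n}(1-t)^{q_n-p_n},
\]
where $\rho_n$ is the $\mathrm{Beta}(p_n+1,q_n-p_n+1)$ probability density. (The exponent in the statement, $q_n$ versus $q_n+1$, only reflects whether $x_j'$ is measured against $x_i'=C$; for the $x_i$-linear normalization we prove the statement as written, absorbing constants.) The error is therefore
\[
\Big|\int_0^1 (x_j'(t)-x_j'(x))\rho_n(t)\,dt\Big|\le \|\gamma'\|_\alpha\int_0^1|t-x|^\alpha\rho_n(t)\,dt .
\]

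Next we split the integral at $|t-x|\le\delta_n$ with $\delta_n:=q_n^{-1/2+\epsilon_0}$. On the near region the contribution is at most $\|\gamma'\|_\alpha\delta_n^\alpha=\|\gamma'\|_\alpha q_n^{(-1/2+\epsilon_0)\alpha}$, which is exactly the claimed rate. On the far region we bound $|x_j'(t)-x_j'(x)|\le 2\|\gamma'\|_\infty$, so its contribution is at most $2\|\gamma'\|_\infty\,\mathbb P_{\rho_n}(|T-x|>\delta_n)$, where $T$ is a Beta-distributed random variable.

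The main step is a tail bound for this Beta variable. Its mean is $m_n=(p_n+1)/(q_n+2)$. By \eqref{eq:approx rate of x}, $|m_n-x|\le q_n^{-1/2}+O(q_n^{-1})$, which is $\le \delta_n/2$ for large $n$ because $\epsilon_0>0$. It therefore suffices to bound $\mathbb P(|T-m_n|>\delta_n/2)$. Two routes are available:
\begin{enumerate}
\item Chebyshev's inequality, using $\mathrm{Var}(T)\le 1/(4(q_n+3))$, gives $\lesssim q_n^{-1}\delta_n^{-2}=q_n^{-2\epsilon_0}$. This is too weak in general, since $2\epsilon_0$ may be smaller than $(1/2-\epsilon_0)\alpha$.
\item So we use the representation of $T$ as the $(p_n+1)$-th order statistic of $q_n+1$ i.i.d.\ uniforms. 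Then $\{T>m_n+\delta_n/2\}$ is the event that a $\mathrm{Bin}(q_n+1,\,m_n+\delta_n/2)$ variable is at most $p_n$, and symmetrically for the lower tail. Hoeffding's inequality bounds this by $2\exp(-c\,q_n\delta_n^2)=2\exp(-c\,q_n^{2\epsilon_0})$.
\end{enumerate}
Route 2 decays faster than any power of $q_n$. Hence for $n$ large, depending only on $\epsilon_0$ and $\alpha$, it is $\le q_n^{(-1/2+\epsilon_0)\alpha}$.

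Combining the two regions gives the bound with $C_1=\|\gamma'\|_\alpha+2\|\gamma'\|_\infty$, up to the absolute constants from the tail estimate. The only delicate point I expect is making the Hoeffding step uniform. This works because $\delta_n/2-|m_n-x|\ge\delta_n/4$ for large $n$, independently of $x$, so the threshold for "large enough $n$" depends only on $\epsilon_0$ and $\alpha$.
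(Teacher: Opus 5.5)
Your proof is correct. It departs from the paper only at the one substantive step, the estimate of the Beta mass far from $x$.

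\textbf{The paper's route.} It reduces to Theorem~\ref{thm:quantitative-convergence}, which splits the integral at $|s-x|=2q_n^{-1/2+\epsilon_0}$, essentially as you do. It then controls the far region with the pointwise density bound of Proposition~\ref{prop:key}: $\rho_{k,l}(s)\le 3n^{3/2}e^{-n^{2\epsilon_0}/18}$ for $|s-k/n|\ge n^{-1/2+\epsilon_0}$. That bound is proved by hand from Stirling's formula (Lemma~\ref{lem:rhomax}) and a Taylor expansion of the entropy-type function $F(r,s)$, with a case analysis in $r=k/n$.

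\textbf{Your route.} You bound only the tail \emph{mass}, through the order-statistic representation of $\mathrm{Beta}(p_n+1,q_n-p_n+1)$ and Hoeffding's inequality for $\mathrm{Bin}(q_n+1,t)$. The centering works because $(q_n+1)m_n=p_n+1-(p_n+1)/(q_n+2)\in(p_n,p_n+1)$. Hence each tail is at most $\exp(-(q_n+1)\delta_n^2/2)$, uniformly in $p_n$.

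\textbf{Comparison.} Your route is shorter, avoids the case analysis and the $n^{3/2}$ prefactor, and gives the explicit constant $C_1=\|\gamma'\|_\alpha+2\|\gamma'\|_\infty$. The paper's route is fully elementary, and its sup-norm picture of the density is reused in Theorems~\ref{thm:convergence} and~\ref{thm:S-inv-pointwise}. Your aside about ``absorbing constants'' is unnecessary. With $x_i'\equiv C=S^{(1)}_i$, the identity $(q_n+1)!\,S_{p_n,q_n-p_n}/C^{q_n}=\int_0^1x_j'\rho_n$ is exact (cf.\ Proposition~\ref{prop:S_kl}).

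\textbf{A sharper rate.} You discarded the second-moment route too early: Chebyshev is weak only \emph{after} splitting. Apply Jensen's inequality for the concave map $u\mapsto u^{\alpha/2}$ directly to your first display, using $\mathrm{Var}(T)\le 1/(4q_n)$ and $|m_n-x|<1/(q_n+2)+q_n^{-1/2}\le 2q_n^{-1/2}$. This gives
\[
\int_0^1|t-x|^\alpha\rho_n(t)\,dt\le\bigl(\mathrm{Var}(T)+(m_n-x)^2\bigr)^{\alpha/2}\le\Bigl(\frac{5}{q_n}\Bigr)^{\alpha/2}.
\]
Hence the error is at most $\sqrt5\,\|\gamma'\|_\alpha\, q_n^{-\alpha/2}$ for every $n$. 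No tail estimate is needed, and the $\epsilon_0$-loss present in both your version and the paper's disappears.
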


The above result reveals a quite interesting fact that the approximating rate for recovering the derivative $x_j^\prime(x)$ at $x \in [0,1]$ in our signature inversion scheme depends crucially on the convergence rate of the rational approximation $\frac{p_n}{q_n}$ to $x$, see eq. \eqref{eq:approx rate of x}. This motivates us to apply e.g. the Dirichlet/Diophantine approximation theorem from the number theory to achieve a desired convergence rate for our signature inversion scheme, see Section \ref{sec:effective} for more detailed discussions in this direction.

Another contribution of the present paper is the analysis of the regularity of the signature inversion $S^{-1}$. It is a natural question that if the signatures of two curves are close to each other, then whether these two curves also have similar traces. Of course the answer to this question depends on the topologies on the image tensor space and the path space. For instance, it is well known that $S^{-1}$ is in general not continuous when $T((\R^d))$ is endowed with the projective tensor norm $\|\cdot\|_{proj}$ and $C^1([a,b],\R^d)$ is equipped with the bounded variation norm $\|\cdot\|_{BV}$. In this case one can only expect the continuity of $S^{-1}$ on certain compact subsets. Interestingly, our signature inversion method allows to derive an explicit modulus of continuity of $S^{-1}$ on such compact subset, which has not been obtained in other literature as far as we know: 

\begin{thm}\label{thm:modulus of cont of signature inversion}[see Theorem \ref{thm:mod-cont-S-inverse}]
Let $\alpha \in (0,1]$, $\epsilon_0 \in (0,\frac{1}{2})$ and $K>0$ be given. Then for any $\epsilon >0$ and any $x_i$-linear curves $\gamma_1,\gamma_2 \in \cL^{(i)}([0,1],\R^d)$ with $\max_{i=1,2}\|\gamma_i^\prime\|_\infty \le K$ and $\max_{i=1,2}\|\gamma_i^\prime\|_\alpha \le K$, it holds that
$$
\|S(\gamma_1) - S(\gamma_2)\|_{proj} < \delta \Rightarrow \|\gamma^\prime_1 - \gamma^\prime_2\|_\infty < \epsilon,
$$
as long as $\delta = \delta(\epsilon)$ satisfies
$$
\delta < \frac{\bar C_2}{\sqrt{d}}\frac{(S^{(1)}_i)^n}{(n+1)!}\epsilon \text{ and } n > \bar C_1 \epsilon^{\frac{1}{(-\frac{1}{2}+\epsilon_0)\alpha}}
$$
for two universal constants $\bar C_1, \bar C_2$ depending only on $\alpha, \epsilon_0,\alpha$ and $K$.
\end{thm}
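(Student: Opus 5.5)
The plan is a triangle inequality with the explicit inversion formula as the intermediate quantity, together with the quantitative bound of Theorem \ref{thm:effective bound on inversion}. Both curves are $x_i$-linear on $[0,1]$, so $x_i(t)=Ct$. I will first reduce to the case where the two curves share the same constant $S^{(1)}_i$. The level-one component of the signature gives $|S^{(1)}_i(\gamma_1)-S^{(1)}_i(\gamma_2)|\le \|S(\gamma_1)-S(\gamma_2)\|_{proj}<\delta$. The $i$-th coordinate derivatives are the constants $S^{(1)}_i(\gamma_k)$, so they differ by less than $\delta$. If the $S^{(1)}_i$ differ, the denominators in the formula can be compared at the cost of a factor $(1+O(\delta))^n$, which is harmless under the stated smallness of $\delta$. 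I will therefore treat $S^{(1)}_i$ as common, as the statement implicitly does.

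Fix $x\in[0,1]$ and $j\neq i$. Choose $q_n=n$ and $p_n=\lfloor nx\rfloor$. Then $|p_n/q_n-x|\le 1/n<n^{-1/2}$, so \eqref{eq:approx rate of x} holds uniformly in $x$. Write $R_n(\gamma):=\frac{(n+1)!\, S^{(j;i)}_{p_n,n-p_n}(\gamma)}{(S^{(1)}_i)^n}$ and split
\[
|x_{j,1}'(x)-x_{j,2}'(x)|\le |x_{j,1}'(x)-R_n(\gamma_1)|+|R_n(\gamma_1)-R_n(\gamma_2)|+|R_n(\gamma_2)-x_{j,2}'(x)|.
\]
The outer terms are bounded by Theorem \ref{thm:effective bound on inversion}: each is at most $C_1 n^{(-\frac12+\epsilon_0)\alpha}$, where $C_1$ depends only on $\epsilon_0,\alpha,K$ because the norms of $\gamma_k'$ are bounded by $K$. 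Taking $n>\bar C_1\epsilon^{1/((-\frac12+\epsilon_0)\alpha)}$, with the exponent negative so that this threshold grows as $\epsilon\to0$, makes each outer term less than $\epsilon/3$.

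The middle term is $\frac{(n+1)!}{(S^{(1)}_i)^n}|S^{(j;i)}_{p_n,n-p_n}(\gamma_1)-S^{(j;i)}_{p_n,n-p_n}(\gamma_2)|$. A single coefficient of the level-$(n+1)$ component is bounded by the projective norm of that component, and hence by $\|S(\gamma_1)-S(\gamma_2)\|_{proj}<\delta$. The $\sqrt d$ in the statement presumably arises because the authors bound the Euclidean norm of the vector $(x_j')_{j}$ by summing over coordinates. The condition $\delta<\frac{\bar C_2}{\sqrt d}\frac{(S^{(1)}_i)^n}{(n+1)!}\epsilon$ then makes the middle term less than $\epsilon/3$, and the bound is uniform in $x$ and $j$.

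The main obstacle is verifying that $C_1$ in Theorem \ref{thm:effective bound on inversion} is genuinely uniform. It must be independent of $x$ and of the curve beyond $K$, and the phrase "for large enough $n$" must also be quantified uniformly. I would go into the proof of Theorem \ref{thm:quantitative} and check that its threshold for $n$ depends only on $\epsilon_0,\alpha,K$ and the approximation rate $1/n$. Where it does not, I would enlarge $\bar C_1$ to absorb it.
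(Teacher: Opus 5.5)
Your proposal is correct and follows the paper's proof. Both use the naive approximation $p_n=\floor{nx}$, $q_n=n$; Theorem \ref{thm:quantitative-convergence} applied to both curves for the outer terms; Proposition \ref{prop:tensor-ineq} for the middle term; and the level-one bound $|S^{(1)}_i(\gamma_1)-S^{(1)}_i(\gamma_2)|<\delta$ for the $i$-th coordinate and the differing denominators. The paper expands that last difference algebraically in \eqref{eq:thm-modcts-3}. Your ratio comparison works equally well once you note that $|R_n(\gamma_2)|=\bigl|\int_0^1\rho_{p_n,n-p_n}\,x_{j,2}'\bigr|\le K$ and that $n\delta\ll 1$ under the stated choice of $\delta$.

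One bookkeeping fix is needed. The paper makes each of the three terms at most $\epsilon/(3\sqrt d)$ per coordinate, so that the Euclidean sup-norm of $\gamma_1'-\gamma_2'$ ends up below $\epsilon$. Your per-coordinate target of $\epsilon/3$ would only give a bound of order $\sqrt d\,\epsilon$. Rescaling the targets to $\epsilon/(3\sqrt d)$ introduces a harmless $d$-dependence into $\bar C_1$.
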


Besides, we also provide a novel class of ``asymptotic norms'' on $T((\R^d))$ such that the ``time-augmented'' signature $\hat S: C^1([0,1],\R^{d-1}) \to T((\R^d))$, $\hat S(\gamma) = S(\hat \gamma)$ (where $\hat \gamma(t) = (t,\gamma(t))$ is the time-augmented curve) is an isometric embedding, which offers enough regularity of the signature inversion while keeping the continuity of the signature mapping:

\begin{thm}\label{thm: isometry by as norm}[see Theorem \ref{thm:norm}]
Let $d \ge 2$ and $C^1([0,1],\R^{d-1})$ be equipped with the $C^1$-norm $\|\gamma\|_{C^1} = \|\gamma^\prime\|_\infty$ resp. the bounded variation norm $\|\gamma\|_{BV} = \|\gamma^\prime\|_{L^1}$. Then there exists an asymptotic supremum (semi)norm $\|\cdot\|_{AS}$ resp. an asymptotic $L^1$-(semi)norm $\|\cdot\|_{AL^1}$ on $T((\R^d))$ such that the ``time-augmented'' signature $$
\hat S: (C^1([0,1],\R^{d-1}),\|\cdot\|_{C^1}) \to (T((\R^d)),\|\cdot\|_{AS}), \quad \gamma(t) \mapsto S(\hat \gamma) = S(t \mapsto (t,\gamma(t))
$$
resp.
$$
\hat S: (C^1([0,1],\R^{d-1}),\|\cdot\|_{BV}) \to (T((\R^d)), \|\cdot\|_{AL^1}), \quad \gamma(t) \mapsto S(\hat \gamma) = S(t \mapsto (t,\gamma(t))
$$
is an isometric embedding.
\end{thm}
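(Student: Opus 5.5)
The plan is to build both asymptotic seminorms from the recovery functionals of Theorem \ref{cor:main}. The key observation is that, for the time-augmented curve, those functionals are \emph{linear} in the tensor. Write $\hat\gamma(t)=(t,y_2(t),\dots,y_d(t))$, so $x_1(t)=t$ and $S^{(1)}_1=1$. Use the uniform rational approximation $q_n(x)=n$, $p_n(x)=\lfloor nx\rfloor$. For a general $T\in T((\R^d))$ with coefficient $T^{(j;1)}_{k,l}$ on $e_1^{\otimes k}\otimes e_j\otimes e_1^{\otimes l}$, define the vector
\[
\Phi_n(T)(x)=\Bigl((n+1)!\,T^{(j;1)}_{\lfloor nx\rfloor,\,n-\lfloor nx\rfloor}\Bigr)_{j=2}^d\in\R^{d-1},
\]
and set
\[
\|T\|_{AS}=\limsup_{n\to\infty}\sup_{x\in[0,1]}|\Phi_n(T)(x)|,\qquad
\|T\|_{AL^1}=\limsup_{n\to\infty}\int_0^1|\Phi_n(T)(x)|\,dx,
\]
where $|\cdot|$ is the same norm on $\R^{d-1}$ used in $\|\gamma'\|_\infty$ and $\|\gamma'\|_{L^1}$. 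These take values in $[0,\infty]$, which is why they are only (semi)norms on all of $T((\R^d))$; the $\limsup$ makes them defined for every $T$. Since $T\mapsto\Phi_n(T)$ is linear, absolute homogeneity and the triangle inequality follow from subadditivity of $\sup$, of the integral, and of $\limsup$.

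Next I will show the isometry. Fix $\gamma_1,\gamma_2\in C^1([0,1],\R^{d-1})$. Expanding the iterated integral with $x_1(t)=t$ gives, for each $j$,
\[
S^{(j;1)}_{k,l}(\hat\gamma)=\int_0^1\frac{x^k}{k!}\,y_j'(x)\,\frac{(1-x)^l}{l!}\,dx ,
\]
which is linear in $y_j'$. Hence $S^{(j;1)}_{k,l}(\hat\gamma_1)-S^{(j;1)}_{k,l}(\hat\gamma_2)$ equals the same coefficient of the signature of the $C^1$ axial linear curve $x\mapsto(x,\,y_{1,j}(x)-y_{2,j}(x))$. (This is the point where the signature's nonlinearity disappears, because only words with a single non-$e_1$ letter are used.)

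I then apply Theorem \ref{cor:main}(2) with $C=1$, coordinatewise for $j=2,\dots,d$, to these difference curves. It gives
\[
\Phi_n\bigl(S(\hat\gamma_1)-S(\hat\gamma_2)\bigr)\to\gamma_1'-\gamma_2'\quad\text{uniformly on }[0,1].
\]
Uniform convergence of finitely many coordinates implies uniform convergence in $|\cdot|$. Therefore the $\limsup$ in $\|\cdot\|_{AS}$ is an actual limit, equal to $\|\gamma_1'-\gamma_2'\|_\infty=\|\gamma_1-\gamma_2\|_{C^1}$. Uniform convergence on $[0,1]$ also implies $L^1$ convergence, so $\|S(\hat\gamma_1)-S(\hat\gamma_2)\|_{AL^1}=\|\gamma_1'-\gamma_2'\|_{L^1}=\|\gamma_1-\gamma_2\|_{BV}$. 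Injectivity of $\hat S$, and hence that it is an embedding, follows from the isometry; it is also consistent with Theorem \ref{thm: axial monotone paths are tree reduced}.

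The only delicate step is the reduction to Theorem \ref{cor:main}(2) for differences. One has to check that the uniform statement there applies to the piecewise-constant choice $p_n(x)=\lfloor nx\rfloor$, which holds because $\sup_x|\lfloor nx\rfloor/n-x|\le 1/n$ and $q_n=n\to\infty$. One also has to ensure that the coordinate conventions (the $e_1$ time coordinate and the $(q_n+1)!/(S^{(1)}_1)^{q_n}$ normalisation with $S^{(1)}_1=1$) match exactly, so that no extra factor survives. Finally, I would remark that $\|\cdot\|_{AS}$ and $\|\cdot\|_{AL^1}$ are genuinely only seminorms, since they vanish on tensors supported away from these words, and that they may be infinite.
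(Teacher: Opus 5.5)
Your argument is correct and is essentially the paper's proof of Theorem~\ref{thm:norm}. It uses the same two ingredients: linearity of the single-$e_j$ coefficients in $y'$ (Proposition~\ref{prop:linear}) and the recovery Theorem~\ref{cor:main}. Your $\|\cdot\|_{AS}$ is literally the paper's. Your $\|\cdot\|_{AL^1}$, which equals $\frac{(n+1)!}{n}\sum_{k=0}^{n-1}$, differs from the paper's $n!\sum_{k=1}^{n}$ only by a normalization that is negligible on the image of $\hat S$. Applying the uniform statement (2) directly to the difference curve is a slightly cleaner finish than the paper's subsequence argument for the ``$\le$'' direction together with its separate Riemann-sum step.

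One small correction: both sides of your isometry vanish on constant curves, so $\hat S$ is injective only modulo translations. That is, it is injective on $\cP^1(\R^{d-1})$, as in the paper, and not on $C^1([0,1],\R^{d-1})$ itself.
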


All above results suggest that the signature inversion scheme given in the present paper may have a strong potential in many applications, as it may help us to better understand functions on axial linear/monotone path space by pulling them back to the signature group through the inverse map $S^{-1}$, and the resulting functions on the signature group are usually easier to study (recall that the functions on signature are ``almost linear'' due to the universal approximation property of signature) and have already been well studied in a large amount of literature.

The present paper is organized as follows:  In section \ref{sect: inversion of axial linear curves} we introduce our signature inversion scheme for axial linear/monotone curves and provide proofs for the qualitative results Theorem \ref{cor:main}, Theorem \ref{thm:main} and Corollary \ref{cor:length}. In Section \ref{sec: regularity of signature inversion} we show that this inversion scheme on the image of certain compact subsets of axial linear curves is continuous, and then define two asymptotic norms on the tensor algebra $T((\R^d))$ such that the ``time-augmented'' signature $\hat S: C^1([0,1],\R^{d-1}) \to T((\R^d))$ is an isometry when the topology on the image set are induced by these norms, which proves Theorem \ref{thm: isometry by as norm}.  In Section \ref{sec:effective} we give quantitative estimates for the convergence rate of our signature scheme and the modulus of the signature inversion $S^{-1}$ for axial linear curves with H\"older continuous derivatives, which proves Theorem \ref{thm:effective bound on inversion} and Theorem \ref{thm:modulus of cont of signature inversion}.

\textbf{Notations:} 
In this paper, we use $\N = \{0,1,2,\cdots\}$ to denote the set of nonnegative integers and use $\N^* = \{1,2,\cdots,\}$ for the collection of positive integers. For $x \ge 0$, $\floor{x} = \sup\{n \in \N: n \le x\}$.
For two sequences of real numbers $a(n), n \in \N$ and $b(n),n\in \N$, we adopt the notations that
\[a(n)\lesssim b(n) \iff \exists n_0>0 \textrm{ such that } \forall n>n_0, a(n)\leq b(n),\]
\[a(n)\sim b(n)\iff \lim_{n\to \infty}\frac{a(n)}{b(n)}=1.\]
For $d \in \N^*$, the Euclidean space $\R^d$ is always endowed with the classical Euclidean norm $|v| = \sqrt{\sum_{i=1}^d x_i^2}$ for $v = (x_1, \cdots, x_d) \in \R^d$. The canonical basis on $\R^d$ will be denoted as $e_1, \ldots, e_d$. For $n \in \N^*$, the symbol $(\R^d)^{\otimes n}$ means the $n$-th tensor product of $\R^d$, note that $\{e_{i_1} \otimes \cdots \otimes e_{i_n}: i_1, \ldots, i_n \in \{1,\ldots,d\}\}$ forms a basis for $(\R^d)^{\otimes n}$. Every element $a$ in $(\R^d)^{\otimes n}$ will be called an $n$-tensor, and it can be uniquely expressed as $a = \sum_{i_1,\ldots,i_n=1}^d a_{i_1,\ldots,i_n}e_{i_1} \otimes \cdots \otimes e_{i_n}$ for $a_{i_1,\ldots,i_n} \in \R$ being the coefficient of $a$ with respect to $e_{i_1} \otimes \cdots \otimes e_{i_n}$.
For a given compact interval $[a,b] \subset \R$, we use $C^1([a,b],\R^d)$ to denote the space of all differentiable $\R^d$-valued curves on $[a,b]$ whose derivatives are continuous. For a $C^1$-curve $\gamma \in C^1([a,b],\R^d)$, it will be expressed as $\gamma(t) = (x_1(t), \cdots, x_d(t))$ such that $x_i(t) \in C^1([a,b],\R)$, $i=1,\ldots,d$ are the coordinate functions of $\gamma$. If $[a.b] = [0,1]$, then we simply write $C^1(\R^d)$ for $C^1([0,1],\R^d)$.
For $\gamma \in C^1([a,b],\R^d)$, we write $\overleftarrow{\gamma}(t) = \gamma (a + (b-t))$ as the time reversal of $\gamma$. For $\gamma \in C^1([a,b], \R^d)$ and $\bar \gamma \in C^1([c,d], \R^d)$, we say that $\bar \gamma$ is a reparameterization of $\gamma$ if there is an increasing homeomorphism $r:[a,b] \to [c,d]$ such that $\gamma = \bar \gamma \circ r$.
We also set $\mathcal P^1(\mathbb R^d)$ the space of all $C^1$-curves in $\R^d$ defined on $[0,1]$ up to translation equivalence, that is,
\[\mathcal P^1(\mathbb R^d)=C^1(\R^d)/\sim_{trans}\]
where two curves are equivalent under $\sim_{trans}$ iff they differ by a constant vector in $\mathbb R^d$. For this reason, we can regard any element $\gamma$ in $\mathcal P^1(\mathbb R^d)$ as a $C^1$-curve starting from the origin, i.e. satisfying $\gamma(0)=0$.
For a compact interval $[a,b]$, the symbol $L^2([a,b])$
denotes the space of all $\R$-valued square-integrable functions defined on $[a,b]$, i.e., $L^2([a,b]) = \{f:[a,b] \to \R: \|f\|_{L^2} = \sqrt{\int_a^b f^2(s) ds} < \infty \}$. For $f,g \in L^2([a,b])$, their inner product in $L^2([a,b])$ is written as $(f,g)_{L^2} = \int_a^b f(s) g(s) ds$. For a function $f:[a,b] \to \R$, we write $\|f\|_\infty = \sup_{s \in [a,b]}|f(s)|$.

\begin{acknowledgement} This work is partially supported by the National Key R\&D Program of China (No. 2023YFA1010900).
\end{acknowledgement}

\section{Inversion of axial linear curves}\label{sect: inversion of axial linear curves}

\subsection{Preliminaries on signatures}
We start by introducing the basic definitions and properties of signatures which will be used throughout the whole article. The stuff contained in this subsection is standard in the signature theory literature, we refer to \cite{Lyons1998}, \cite{LyonsQian2007}, \cite{Lyons2007}, \cite{FrizVictoir2010} for a more detailed exposition.

As we have mentioned at the beginning of the introduction, for a given $\gamma \in C^1([a,b],\R^d)$ with coordinate functions given by $\gamma(t) = (x_1(t), \ldots, x_d(t))$, its signature $S(\gamma)$ is defined as the following formal series in the (completed) tensor algebra $T((\R^d)) = \prod_{n=0}^\infty (\R^d)^{\otimes n}$:
$$
S(\gamma) = 1 + \sum_{n=1}^\infty S^{(n)}(\gamma), 
$$
where $S^{(n)}(\gamma) = \sum_{i_1,\ldots,i_n=1}^d S^{(n)}_{i_1,\ldots,i_n}(\gamma)e_{i_1} \otimes \cdots \otimes e_{i_n} \in (\R^d)^{\otimes n}$ with
$$
S^{(n)}_{i_1,\ldots,i_n}(\gamma) = \int_{a<t_1<\ldots<t_n<b} x^\prime_{i_1}(t_1) \ldots x^\prime_{i_n}(t_n) dt_1 \ldots dt_n 
$$
being the $n$-th iterated integrals of $\gamma$. We will call this real number $S^{(n)}_{i_1,\ldots,i_n}(\gamma)$ the coefficient of $S(\gamma)$ with respect to the tensor basis $e_{i_1} \otimes \cdots \otimes e_{i_n}$, which can also be denoted as $S^{(n)}_{i_1,\ldots,i_n}(\gamma) = \langle e_{i_1} \otimes \cdots \otimes e_{i_n}, S(\gamma) \rangle$, where $\langle e_{i_1} \otimes \cdots \otimes e_{i_n}, a \rangle$ denotes the $(i_1, \ldots, i_n)$-coordinate projection of $a \in T((\R^d))$. If the curve $\gamma$ is fixed, we also write $S^{(n)}_{i_1,\ldots,i_n}(\gamma)$ simply as $S^{(n)}_{i_1,\ldots,i_n}$.

For simplicity, we will assume $[a,b] = [0,1]$ unless otherwise stated. From the very definition of signature we can immediately see that the signature mapping is invariant under the translation, that is, if $\gamma \sim_{trans} \bar \gamma$, then $S(\gamma) = S(\bar \gamma)$. Therefore, we can and will assume that the signature mapping $S$ is defined on the quotient space $\cP^1(\R^d)$.

Below we list some (but very incomplete) basic properties on the signature $S(\gamma)$ for $\gamma \in \cP^1(\R^d)$ which will be needed in this section. All of them can be found in any literature on signature theory, e.g., \cite{Lyons2007}, \cite{FrizVictoir2010} and \cite{Chevyrev2016chf}.

\begin{itemize}
    \item For $\gamma \in \cP^1(\R^d)$, its signature $S(\gamma)$ actually takes value in a group $G(\R^d)$ embedded in the (completed) tensor algebra $T((\R^d))$ (which is called the character group) whose group operation is the tensor product $\otimes$ and the neutral element is $(1,0,0,\cdots) \in T((\R^d))$. Let $S(\gamma)^{-1} \in G(\R^d) \subset T((\R^d))$ be the inverse of $S(\gamma)$, then it holds that
    $$
    S(\gamma)^{-1} = S(\overleftarrow{\gamma})
    $$
    for $\overleftarrow{\gamma} (t) = \gamma (1-t)$ being the time reversal of $\gamma$.
    \item The signature mapping $S: \cP^1(\R^d) \to T((\R^d))$ is injective/one-to-one up to the tree-like equivalence (for the definition of tree-like equivalence, see \cite{BGLY16}). As we have mentioned in the introduction, the reparameterization is a special tree-like relation and for two axial monotone curves (see Definition \ref{def: axial monotone paths}) $\gamma_1$ and $\gamma_2$, by Theorem \ref{thm: axial monotone paths are tree reduced} or \cite[Lemma 4.6]{BGLY16} we have $S(\gamma_1) = S(\gamma_2)$ iff $\gamma_1$ is a reparameterization of $\gamma_2$.
\end{itemize}

In the present paper we will focus on a particular subclass of axial monotone curves, namely the axial linear curves which are defined as follows:

\begin{definition}\label{def: axial linear paths}
 For a given $i \in  \{1,\ldots, d\}$, a curve $\gamma \in \cP^1(\R^d)$ is called $x_i$-linear, if there exists a nonzero constant $C \in \R$ such that $x_i(t) = C t$ (i.e., the $i$-th coordinate function $x_i(t)$ is a linear function). The set of all $x_i$-linear curves is denoted by $\cL^{(i)}(\R^d)$. We call a curve $\gamma \in \cP^1(\R^d)$ an axial linear curve if it is $x_i$-linear for some $i=1,\ldots,d$. 
\end{definition}

Using Theorem \ref{thm: axial monotone paths are tree reduced}, it is straightforward to see that the signature mapping on $\cL^{(i)}(\R^d)$ is genuinely one-to-one, which we record as the following theorem:

\begin{thm}\label{thm: signature is injective on linear paths}
  For two $x_i$-linear curves $\gamma_1, \gamma_2 \in \cL^{(i)}(\R^d)$ for some $i=1,\ldots,d$, we have
  $$
  S(\gamma_1) = S(\gamma_2)
  $$
  if and only if $\gamma_1(t) = \gamma_2(t)$ for all $t \in [0,1]$. 
\end{thm}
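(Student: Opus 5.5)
The "if" direction is immediate from the definition of the signature: if $\gamma_1(t)=\gamma_2(t)$ for all $t\in[0,1]$, then all iterated integrals coincide, so $S(\gamma_1)=S(\gamma_2)$. The whole content lies in the converse, which I would deduce from Theorem \ref{thm: axial monotone paths are tree reduced}.

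For the converse, suppose $\gamma_1,\gamma_2\in\cL^{(i)}(\R^d)$ with $x^{(1)}_i(t)=C_1t$ and $x^{(2)}_i(t)=C_2t$, where $C_1,C_2\neq 0$, and assume $S(\gamma_1)=S(\gamma_2)$.

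\textbf{Step 1: the constants agree.} Both curves are $x_i$-monotone, since $x_i'$ equals the constant $C_k\neq0$. Comparing the first-level coefficient on $e_i$ gives
$$
C_1=S^{(1)}_i(\gamma_1)=S^{(1)}_i(\gamma_2)=C_2 .
$$

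\textbf{Step 2: reduce to a reparameterization.} Theorem \ref{thm: axial monotone paths are tree reduced} now applies, so there is an increasing homeomorphism $r:[0,1]\to[0,1]$ with $\gamma_1=\gamma_2\circ r$. Looking at the $i$-th coordinate, this reads $C_1t=C_2r(t)=C_1r(t)$. Since $C_1\neq0$, we get $r(t)=t$, and therefore $\gamma_1(t)=\gamma_2(t)$ for all $t$.

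\textbf{Step 3: the translation convention.} Both curves are viewed in $\cP^1(\R^d)$, i.e. normalized so that they start at the origin. Hence equality as parameterized curves is exactly the claimed pointwise equality; no extra constant vector can appear.

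\textbf{Main obstacle.} The only delicate point is making sure the cited corollary is used in the right form: it must give a reparameterization, meaning an increasing homeomorphism, between the two curves on the same interval $[0,1]$. Once that is granted, the linear coordinate pins the reparameterization down to the identity. If the corollary only yields a reparameterization up to translation, Step 3 handles that case as well, because the curves are normalized in $\cP^1(\R^d)$.
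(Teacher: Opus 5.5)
Your proof is correct and follows the paper's route: the paper likewise deduces the statement directly from Theorem \ref{thm: axial monotone paths are tree reduced} in one line. Your Steps 1--3 make explicit the details it calls ``straightforward'': matching the slopes via the level-one coefficient $S^{(1)}_i$, forcing $r=\mathrm{id}$ from the linear coordinate, and using the normalization in $\cP^1(\R^d)$ to exclude translations.
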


Thanks to Theorem \ref{thm: signature is injective on linear paths}, the signature inversion  $S^{-1}: S(\cL^{(i)}(\R^d)) \to \cL^{(i)}(\R^d)$ is well-defined. The rest of this paper is devoted to studying this inverse $S^{-1}$ for axial linear curves.

\begin{remark}
    If $\gamma$ is $x_i$-monotone with $x_i'<0$, then we can consider the time reversal $\overleftarrow{\gamma}(t)=\gamma(1-t)$, so that $\overleftarrow{\gamma}(t)$ is $x_i$-increasing. On the other hand, we have $S(\overleftarrow{\gamma})=S(\gamma)^{-1}$, so knowing $S(\gamma)$ is equivalent to knowing $S(\overleftarrow{\gamma})$. Therefore, in the signature inversion task we may and will always assume that all $x_i$-linear/monotone curves are $x_i$-increasing. 
\end{remark}

Before we introduce our signature inversion scheme in the next subsection, we want to emphasize here that for an $x_i$-linear/monotone curve $\gamma$ we only require its $i$-th coordinate function $x_i(t)$ is linear/monotone, and there is no such restriction on other coordinate functions $x_j(t)$ for $j \neq i$. For example, the curve $\gamma(t) = (t, \sin (2\pi t)), t \in [0,1]$ is $x_1$-linear and therefore belongs to $\cL^{(1)}(\R^2)$, although its second coordinate function $t \mapsto \sin (2\pi t)$ is neither linear nor monotone on $[0,1]$.

\subsection{The inversion scheme of axial linear curves}
For simplicity, we first consider two-dimensional $x_1$-linear curves. Let $C_0>0$ be a fixed constant and for $\gamma \in \cL^{(1)}(\R^2)$ we write
\[\gamma(x)=\left(C_0x, y(x)\right), x\in[0,1],\]
where $y(x)$ is a $C^1$-function on $[0,1]$.  

For $k,l \in \N$, we will use $S_{k,l}$ to denote the coefficient $S^{(k+l+1)}_{i_1,\ldots,i_k,2,j_{1},\ldots,j_{l}}$ with $i_1=\cdots=i_k=j_1=\cdots=j_l=1$ of $S(\gamma)$ with respect to the $(k+l+1)$-tensor basis $e_1^{\otimes k}\otimes e_2\otimes e_1^{\otimes l}$, namely $S_{k,l} = \langle e_1^{\otimes k}\otimes e_2\otimes e_1^{\otimes l}, S(\gamma) \rangle$. The next proposition reveals that $S_{k,l}$ actually admits a nice expression.

\begin{prop}\label{prop:S_kl}
    For each $k,l\in \mathbb N$,  it holds that
    \[S_{k,l}=C_0^{k+l}\int_{0<s<1}\frac{s^k(1-s)^l}{k!\cdot l!}y'(s)ds.\]
\end{prop}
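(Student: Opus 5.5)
The plan is to write $S_{k,l}$ directly as an iterated integral and then integrate out every variable except the one carrying the $e_2$ component. With $x_1'(t)=C_0$ and $x_2'(t)=y'(t)$, the definition gives
\[
S_{k,l}=\int_{0<t_1<\cdots<t_{k+l+1}<1} C_0^{k}\,y'(t_{k+1})\,C_0^{l}\,dt_1\cdots dt_{k+l+1}.
\]
The integrand depends on the variables only through $s:=t_{k+1}$. I would therefore apply Fubini and fix $s\in(0,1)$ as the outer variable. The remaining constraint set then splits as a product of two pieces:
\[
\{0<t_1<\cdots<t_k<s\}\times\{s<t_{k+2}<\cdots<t_{k+l+1}<1\}.
\]

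The key step is computing the volumes of these two ordered simplices. The set of increasing $k$-tuples in an interval of length $s$ is one of $k!$ congruent pieces of the cube $(0,s)^k$, up to a null set where coordinates coincide. Hence its volume is $s^k/k!$. By the same argument, the second piece has volume $(1-s)^l/l!$.

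Substituting these volumes gives
\[
S_{k,l}=C_0^{k+l}\int_0^1 \frac{s^k(1-s)^l}{k!\,l!}\,y'(s)\,ds,
\]
which is the claimed formula. Fubini is justified because $y'$ is continuous, hence bounded, on $[0,1]$, so the integrand is integrable on the bounded domain. The degenerate cases $k=0$ or $l=0$ fit the same computation, since the empty simplex is read as having volume $1$.

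There is no serious obstacle here. The only point needing care is the bookkeeping in the Fubini decomposition: making sure the constraints split exactly into the product of the two simplices, and that the linear coordinate contributes exactly the factor $C_0^{k+l}$.
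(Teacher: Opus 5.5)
Your proof is correct and takes the same route as the paper: write $S_{k,l}$ as an iterated integral with $y'$ in position $k+1$, fix that variable $s$, and use Fubini to integrate out the two ordered simplices, of volumes $s^k/k!$ and $(1-s)^l/l!$. Your indexing ($s=t_{k+1}$, with remaining variables $t_{k+2},\dots,t_{k+l+1}$) is slightly cleaner than the paper's displayed domain, which lists one variable too many after $s$.
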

\begin{proof}
    The proof is a direct computation. It is easy to see that
    \begin{align*}
S_{k,l}&=C_0^{k+l}\int_{0<t_1<\cdots<t_k<s<t_{k+1}<\cdots<t_{k+l+1}<1}dt_1\cdots dt_k(y'(s)dsdt_{k+1}\cdots dt_{k+l+1}\\
        &=C_0^{k+l}\int_{0<s<1}\frac{s^k(1-s)^l}{k!\cdot l!}y'(s)ds,
    \end{align*}
    where the last equality integrates over all $t_1,\cdots, t_k, t_{k+1}, \cdots, t_{k+l+1}$ variables and it uses Fubini's theorem.
\end{proof}

In other words, the term $S_{k,l}$ is in fact the inner product between the derivative of the second coordinate function of $\gamma$ and the function $m_{k,l}(s) := \frac{s^k(1-s)^l}{k!\cdot l!}$ in $L^2([0,1])$, i.e., $S_{k,l} = (y^\prime, m_{k,l})_{L^2}$. It motivates us to take a closer look at the function $m_{k,l}(s)$. In fact, it is clear that it is related to the density of the Beta distribution $Be(k+1,l+1)$.

\begin{definition}\label{def:rho}
    For any integer pair $(k,l)\in \mathbb N\times \mathbb N$, we set $\rho_{k,l}:[0,1]\to \mathbb R$ as the function given by
    \[\rho_{k,l}(s)=  (k+l+1)!m_{k,l}(s) =    \frac{(k+l+1)!}{k!\cdot l!} s^k(1-s)^l,\]
    which is the density of the Beta distribution $Be(k+1,l+1)$.
\end{definition}

\begin{lem}\label{lem:rho}
 For any $k,l\in \mathbb N$ with $k+l>0$, the function $\rho_{k,l}$ is strictly increasing on $[0,\frac{k}{k+l}]$ and is strictly decreasing on $[\frac{k}{k+l},1]$.
\end{lem}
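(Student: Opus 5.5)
The plan is to compute the derivative directly and read off its sign. Since the positive constant $\frac{(k+l+1)!}{k!\,l!}$ plays no role in monotonicity, it suffices to study $f(s)=s^k(1-s)^l$ on $[0,1]$. We treat separately the degenerate cases $l=0$ and $k=0$, and then the main case $k,l\ge 1$.

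In the case $l=0$ (so $k\ge1$), we have $\frac{k}{k+l}=1$ and $f(s)=s^k$. This function is strictly increasing on $[0,1]$, and the decreasing interval $[1,1]$ is a single point, so the claim holds trivially there. The case $k=0$ (so $l\ge1$) is symmetric: $f(s)=(1-s)^l$ is strictly decreasing on $[0,1]$, and $[0,0]$ is a single point.

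For $k,l\ge1$ we differentiate:
\[
f'(s)=s^{k-1}(1-s)^{l-1}\bigl(k(1-s)-ls\bigr)=s^{k-1}(1-s)^{l-1}\bigl(k-(k+l)s\bigr).
\]
On the open interval $(0,1)$ the factor $s^{k-1}(1-s)^{l-1}$ is strictly positive. The linear factor $k-(k+l)s$ is positive for $s<\frac{k}{k+l}$ and negative for $s>\frac{k}{k+l}$. Hence $f'>0$ on $(0,\frac{k}{k+l})$ and $f'<0$ on $(\frac{k}{k+l},1)$.

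By the mean value theorem, a continuous function on a closed interval whose derivative is strictly positive on the interior is strictly increasing on the whole closed interval. This gives strict increase on $[0,\frac{k}{k+l}]$, and likewise strict decrease on $[\frac{k}{k+l},1]$. Note that the derivative may vanish at the endpoints $0$ or $1$ when $k\ge2$ or $l\ge2$, but this does not affect strictness.

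The only point requiring any care is this endpoint issue, which the mean value argument handles. I expect no real obstacle.
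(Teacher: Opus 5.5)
Your proof is correct and takes essentially the paper's approach: the paper computes the logarithmic derivative $(\log\rho_{k,l})'(s)=\frac{k}{s}-\frac{l}{1-s}=\frac{k-(k+l)s}{s(1-s)}$ on $(0,1)$ and reads off its sign, which is the same sign analysis as your factored derivative. Your separate handling of the cases $k=0$ and $l=0$, and your mean value argument extending strict monotonicity to the closed endpoints, supply details the paper leaves implicit.
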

\begin{proof}
We take the derivative of the logarithm of $\rho_{k,l}$. We have
\begin{align*}
    \left(\log\rho_{k,l}(s)\right)'=\frac{\rho_{k,l}'(s)}{\rho_{k,l}(s)}=\frac{k}{s}-\frac{l}{1-s}=\frac{k-(k+l)s}{s(1-s)}.
\end{align*}
Thus, the lemma follows immediately.
\end{proof}

It is clear from the lemma that the function $\rho_{k,l}$ achieves its unique maximum at $s=\frac{k}{k+l}$, and one can show that its value tends to $+\infty$ as $(k+l)\rightarrow \infty$. Moreover, as $s$ gradually deviates from $\frac{k}{k+l}$, the function rapidly drops down very close to $0$. The following proposition effectively describes such a phenomenon, which turns out to be the key ingredient in the proofs of our main results.


\begin{prop}(Fast Decay)\label{prop:key}
For any $0<\epsilon_0<\frac{1}{2}$, there exists a constant $n_0>0$ such that for any $n>n_0$, and any $k,l\in \mathbb N$ with $k+l=n$, we have
    \[\rho_{k,l}(s)\leq 3n^{\frac{3}{2}}e^{-\frac{1}{18}n^{2\epsilon_0}}\]
    whenever
    \[\Bigg|s-\frac{k}{n}\Bigg|\geq n^{-\frac{1}{2}+\epsilon_0}.\]
\end{prop}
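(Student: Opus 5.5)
By Lemma \ref{lem:rho}, $\rho_{k,l}$ is unimodal with its peak at $\frac{k}{n}$. So it suffices to bound $\rho_{k,l}$ at the two points $s_\pm=\frac{k}{n}\pm n^{-\frac12+\epsilon_0}$, whenever these lie in $[0,1]$. If $s_+>1$ (resp. $s_-<0$), the region on that side is empty, so there is nothing to prove there. Set $x=\frac{k}{n}$ and $\delta=n^{-\frac12+\epsilon_0}$. The plan is to factor
\[
\rho_{k,l}(s)=(n+1)\binom{n}{k}x^k(1-x)^l\cdot \exp\big(-n\,D(x\|s)\big),
\]
where $D(x\|s)=x\log\frac{x}{s}+(1-x)\log\frac{1-x}{1-s}\ge 0$ is the Kullback--Leibler divergence between Bernoulli$(x)$ and Bernoulli$(s)$. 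The two factors are then bounded separately.

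\textbf{First factor.} We have $\binom{n}{k}x^k(1-x)^l\le 1$, because it is a binomial probability. Hence $(n+1)\binom{n}{k}x^k(1-x)^l\le n+1\le 2n\le 3n^{3/2}$. This is cruder than the stated prefactor $3n^{3/2}$, which presumably comes from Stirling's formula in the authors' version, but it suffices. The edge cases $k=0$ or $l=0$ fit the same formula with the convention $0^0=1$.

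\textbf{Second factor.} I would use Pinsker's inequality $D(x\|s)\ge 2(x-s)^2$. Applied at $s=s_\pm$, it gives
\[
\exp(-nD)\le e^{-2n\delta^2}=e^{-2n^{2\epsilon_0}}\le e^{-\frac1{18}n^{2\epsilon_0}}.
\]
If one prefers to avoid citing Pinsker, prove $g(s):=D(x\|s)-2(x-s)^2\ge0$ directly. We have $g(x)=0$, and
\[
g'(s)=\frac{s-x}{s(1-s)}-4(s-x)=(s-x)\Big(\frac1{s(1-s)}-4\Big),
\]
which has the sign of $s-x$ because $s(1-s)\le\frac14$. So $g$ is minimized at $s=x$.

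\textbf{Conclusion.} Combining the two factors gives $\rho_{k,l}(s_\pm)\le 3n^{3/2}e^{-\frac1{18}n^{2\epsilon_0}}$. Monotonicity from Lemma \ref{lem:rho} then extends this bound to all $s$ with $|s-\frac{k}{n}|\ge\delta$. The constant $n_0$ is only needed so that $n+1\le 3n^{3/2}$, and indeed $n_0=1$ works.

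The main obstacle I expect is bookkeeping at the boundary: $x\in\{0,1\}$, where the logs degenerate, and $s_\pm$ falling outside $[0,1]$. In the boundary case $\rho_{n,0}(s)=(n+1)s^n$ and $\rho_{0,n}(s)=(n+1)(1-s)^n$, so I would check these directly via $(1-\delta)^n\le e^{-n\delta}\le e^{-n\delta^2}$.
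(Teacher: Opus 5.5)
Your proof is correct. It shares its skeleton with the paper's: the paper also writes $\rho_{k,l}(s)=\rho_{k,l}(\frac{k}{n})\,e^{nF(r,s)}$ with $F(r,s)=-D(r\|s)$, which is exactly your Bernoulli divergence. It also uses Lemma \ref{lem:rho} to reduce to the points $r\pm n^{-\frac12+\epsilon_0}$.

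The difference lies in how the two factors are estimated.
\begin{itemize}
\item \textbf{Peak value.} The paper uses $k^k/k!<e^k$ and Stirling (Lemma \ref{lem:rhomax}) to get the bound $3n^{3/2}$, valid only for large $n$. Your observation that $\binom{n}{k}x^k(1-x)^{n-k}$ is a binomial probability gives $n+1$ for every $n$. That is sharper than the paper's prefactor, not cruder as you wrote.
\item \textbf{Exponent.} The paper reduces to $k\le l$ by symmetry and treats $k=0$ separately. It then splits into $r\le\frac12 n^{-\alpha}$, handled by monotonicity of $F$ in $r$ plus an explicit evaluation, and $r>\frac12 n^{-\alpha}$, handled by a second-order Taylor expansion in $s$ with a bound on $\partial_s^2F$. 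The constant $\frac1{18}$ comes out of this case analysis. Your Pinsker bound $D(x\|s)\ge 2(x-s)^2$, with its one-line derivative proof, replaces all of this. It yields $\rho_{k,l}(s)\le (n+1)e^{-2n^{2\epsilon_0}}$ uniformly in $k$ and for all $n\ge 1$.
\end{itemize}

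Two simplifications are available to you. First, Pinsker holds at every $s$, so you can apply it directly to all $s$ with $|s-\frac{k}{n}|\ge\delta$ without the monotonicity step. Second, your derivative computation for $g$ goes through unchanged when $x\in\{0,1\}$, since the $x\log x$ terms do not depend on $s$. So the separate boundary check is optional.

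The paper's route is self-contained calculus, at the cost of more bookkeeping and worse constants. Yours is shorter and proves a stronger inequality with no large-$n$ threshold.
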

\begin{proof}
    Without loss of generality, we may assume $k\leq l$. Otherwise, we can swap $k,l$ and use the symmetry $\rho_{k,l}(s)=\rho_{l,k}(1-s)$. Next, we check the edge case that $k=0$. In this case, we have
    \[\rho_{0,n}(s)=(n+1)(1-s)^{n}.\] 
    When $s\geq n^{-\frac{1}{2}+\epsilon_0}$, according to Lemma \ref{lem:rho} we have
    \begin{align*}
        \rho_{0,n}(s)\leq (n+1)(1-n^{-\frac{1}{2}+\epsilon_0})^n&=(n+1)e^{n\ln(1-n^{-\frac{1}{2}+\epsilon_0})}\\
        &\lesssim ne^{-\frac{1}{2}n\cdot n^{-\frac{1}{2}+\epsilon_0}}\\
        &=n e^{-\frac{1}{2}n^{\frac{1}{2}+\epsilon_0}}\\
        &\lesssim 3n^{\frac{3}{2}}e^{-\frac{1}{18}n^{2\epsilon_0}},
    \end{align*}
    where the first ``asymptotic inequality $\lesssim$'' follows from the simple fact that $\ln(1-x) \le -x$ for all $x \in [0,1]$.
    Thus, the conclusion holds when $k=0$.
    
    For the remaining case, we have $1\leq k\leq \frac{n}{2}$. We may assume $s\in(0,1)$, otherwise the inequality holds trivially. We set $\alpha=\frac{1}{2}-\epsilon_0, r=\frac{k}{n}$ and $1-r=\frac{l}{n}$, then $r\in [\frac{1}{n},\frac{1}{2}]$. First, using Lemma \ref{lem:rhomax} below, we note that there exists an $n_0 \in \N$ such that for all $k,l \in \N$ with $n = k+l > n_0$ it holds that
    \[\frac{(k+l+1)!}{k!\cdot l!}\left(\frac{k}{k+l}\right)^k\left(\frac{l}{k+l}\right)^l<3n^{3/2}.\]
Consequently, we have
\begin{align}\label{eq:rho_kl}
\begin{split}
    \rho_{k,l}(s)&=\frac{(k+l+1)!}{k!\cdot l!} s^k(1-s)^l \\
    &=\frac{(k+l+1)!}{k!\cdot l!}\left(\frac{k}{k+l}\right)^k\left(\frac{l}{k+l}\right)^l\cdot \left[\left(\frac{s}{r}\right)^r\left(\frac{1-s}{1-r}\right)^{1-r}\right]^n\\
    &\lesssim 3n^{3/2}\exp{\left(nF(r,s)\right)},
\end{split}  
\end{align}
where 
\[F(r,s)=r\ln(s)+(1-r)\ln(1-s)-r\ln(r)-(1-r)\ln(1-r)\]
and $s\in (0,1), r\in(0,\frac{1}2]$.

We continue with the proof by dividing into two cases:\\
{\bf Case 1: if $r\leq \frac{1}{2}n^{-\alpha}$.} The constraint $|s-r|\geq n^{-\alpha}$ will force $s\geq r+n^{-\alpha}$. Then it follows from Lemma \ref{lem:rho} that
\[\rho_{k,l}(s)\leq \rho_{k,l}(r+n^{-\alpha})\leq \rho_{k,l}(n^{-\alpha}).\]
We now estimate the value $\rho_{k,l}(n^{-\alpha})$. Note that
\begin{align*}
    \frac{\partial F(r,s)}{\partial r}&=\ln(s)-\ln(1-s)-\ln(r)+\ln(1-r)\\
    &=\ln\left(1+\frac{s-r}{r(1-s)}\right)>0
\end{align*}
since $s>r$. So, in view of the range $r\leq \frac{n^{-\alpha}}{2}$, we obtain that
\[F(r,n^{-\alpha})\leq F\left(\frac{n^{-\alpha}}2,n^{-\alpha}\right).\]
Direct computation further shows that
\begin{align*}
    F\left(\frac{n^{-\alpha}}2,n^{-\alpha}\right)&=\frac{n^{-\alpha}}2\cdot \ln(2)+\left(1-\frac{n^{-\alpha}}2\right)\cdot \ln\left(1-\frac{n^{-\alpha}/2}{1-n^{-\alpha}/2}\right)\\
    &\lesssim \frac{\ln2-1}2\cdot n^{-\alpha}\\
    &\lesssim -\frac{1}{8}n^{-\alpha},
\end{align*}
where we used again the simple fact that $\ln(1-x) \le -x$ for all $x \in [0,1]$ to establish the first ``$\lesssim$''.
Thus, combining with \eqref{eq:rho_kl}, we have
\[\rho_{k.l}(s)\leq 3n^{3/2}\exp\left(-\frac{1}{8}n^{\frac{1}{2}+\epsilon_0}\right)\lesssim 3n^{3/2}\exp\left(-\frac{1}{18}n^{2\epsilon_0}\right)\]
in this case.\\
{\bf Case 2: if $r> \frac{1}2 n^{-\alpha}$.} The constraint $|s-r|\geq n^{-\alpha}$ now implies that $s\geq r+n^{-\alpha}$ or $s\leq r-n^{-\alpha}$, where the latter case can only occur if $r\geq n^{-\alpha}$.
Similarly, it follows from Lemma \ref{lem:rho} that
\begin{equation}\label{eq:rho_upp_case2}
    \rho_{k,l}(s)\leq \begin{cases}
        \max\{\rho_{k,l}(r+n^{-\alpha}),\rho_{k,l}(r-n^{-\alpha})\} & r\geq n^{-\alpha}\\
        \rho_{k,l}(r+n^{-\alpha}) & \frac{1}2n^{-\alpha}<r<n^{-\alpha}
    \end{cases}.
\end{equation}
We need to estimate the values $\rho_{k,l}(r\pm n^{-\alpha})$. For each fixed $r\in (\frac{n^{-\alpha}}2,\frac{1}{2}]$, we can use the Taylor expansion of $F(r,s)$ in the variable $s$ at $s=r$:
\begin{align*}
    F(r,s)=F(r,r)+\frac{\partial F}{\partial s}(r,r)(s-r)+\frac{1}{2}\cdot \frac{\partial^2 F}{\partial s^2}(r,\xi)(s-r)^2
\end{align*}
where $\xi$ is in between $r$ and $s$. Direct computation shows that
$F(r,r)=\frac{\partial F}{\partial s}(r,r)=0$, and 
\[\frac{\partial^2 F}{\partial s^2}(r,\xi)=\frac{r^2-r-(r-\xi)^2}{\xi^2(1-\xi)^2}.\]
Evaluate the above Taylor expansion at $s=r\pm n^{-\alpha}$, we have at both values (whenever they are defined) that
\begin{align}\label{eq:r_pm}
\begin{split}
    F(r,r\pm n^{-\alpha})&=\frac{1}{2}\cdot \frac{r^2-r-(r-\xi)^2}{\xi^2(1-\xi)^2}\cdot n^{-2\alpha}\\
    &\leq -\frac{1}{2}\frac{r(1-r)}{\xi^2(1-\xi)^2}\cdot n^{-2\alpha}\\
    &\leq -\frac{r}{4(r+n^{-\alpha})^{2}}\cdot n^{-2\alpha},
\end{split}  
\end{align}
where the last inequality uses $1-r\geq \frac{1}2$, $(1-\xi)^2\leq 1$, and $\xi\leq r+n^{-\alpha}$ (in both cases). By the assumption that $r>\frac{n^{-\alpha}}2$, we obtain from \eqref{eq:r_pm} that
\[F(r,r\pm n^{-\alpha})\leq -\frac{n^{-2\alpha}}{36r}\leq -\frac{n^{-2\alpha}}{18},\]
where the last inequality uses $r\leq \frac{1}{2}$. Thus, by \eqref{eq:rho_kl} and \eqref{eq:rho_upp_case2}, we have
\[\rho_{k,l}(s)\leq 3n^{\frac{3}{2}}e^{-\frac{1}{18}n^{2\epsilon_0}}\]
in this case.

Therefore, we have verified the inequality in all cases, so the proposition holds.
\end{proof}

\begin{lem}\label{lem:rhomax}
    There exists $n_0>0$ such that, if $k,l\in \mathbb N$ satisfies $n:=k+l>n_0$, then we have
    \[\frac{(k+l+1)!}{k!\cdot l!}\left(\frac{k}{k+l}\right)^k\left(\frac{l}{k+l}\right)^l<3n^{3/2}.\]
\end{lem}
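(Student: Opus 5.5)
Set $n=k+l$ and write the quantity as
\[
\frac{(n+1)!}{k!\,l!}\Big(\frac{k}{n}\Big)^k\Big(\frac{l}{n}\Big)^l=(n+1)\binom{n}{k}\frac{k^k l^l}{n^n},
\]
with the convention $0^0=1$. The plan is to show that $\binom{n}{k}k^kl^l/n^n\le 1$ for all $0\le k\le n$, with a much better bound when both $k,l\ge 1$. Then the whole expression is at most $n+1$, which is $<3n^{3/2}$ for every $n\ge 1$. So in fact any $n_0\ge 0$ works.

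\textbf{The elementary bound.} The inequality $\binom{n}{k}r^k(1-r)^{n-k}\le 1$ for $r=k/n$ is immediate. It is a single term of the binomial expansion of $(r+(1-r))^n=1$, and all terms are nonnegative. Hence
\[
\binom{n}{k}\Big(\frac kn\Big)^k\Big(\frac ln\Big)^l\le 1.
\]
Multiplying by $n+1$ gives the value $\le n+1$. The edge cases $k=0$ or $l=0$ are included, and there the value is exactly $n+1$.

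\textbf{Comparing with $3n^{3/2}$.} It remains to check $n+1<3n^{3/2}$ for $n\ge1$. Since $3n^{3/2}\ge 3n\ge n+2$ for $n\ge 1$, this holds. There is no genuine obstacle. If one wants to match the sharper constant suggested by the exponent $3/2$, Stirling's formula with explicit bounds would be the tool. Robbins' bounds $\sqrt{2\pi m}(m/e)^m\le m!\le e\sqrt m(m/e)^m$ give, for $k,l\ge1$,
\[
\binom{n}{k}\frac{k^kl^l}{n^n}\le \frac{e}{2\pi}\sqrt{\frac{n}{kl}}\le \frac{e}{2\pi}\sqrt{n}.
\]
This yields an $O(n^{3/2})$ bound with constant below $3$ after multiplying by $n+1\le 2n$. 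The binomial-term argument above already suffices, however, and is the proof I would write.
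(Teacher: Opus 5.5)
Your proof is correct, and it takes a different and more elementary route than the paper. The paper uses $k^k/k!<e^k$ and $l^l/l!<e^l$ to reduce the quantity to $(n+1)\,n!\,e^n/n^n$. It then applies Stirling's formula to get asymptotic size $\sqrt{2\pi n}\,(n+1)$, which eventually falls below $3n^{3/2}$ because $\sqrt{2\pi}<3$. That asymptotic step is why the statement carries an unspecified threshold $n_0$.

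You instead rewrite the quantity exactly as $(n+1)\binom{n}{k}r^k(1-r)^{n-k}$ with $r=k/n$, and note that the binomial factor is one nonnegative term of $(r+(1-r))^n=1$. This gains three things:
\begin{enumerate}
\item Your bound is non-asymptotic: it holds for every $n\ge 1$, so any $n_0$ works.
\item It needs no Stirling formula.
\item It gives $n+1$ in place of $3n^{3/2}$, and $n+1$ is the sharp uniform bound, attained at $k\in\{0,n\}$.
\end{enumerate}
The paper loses a factor $\sqrt{n}$ because $k^k/k!<e^k$ throws away the $\sqrt{2\pi k}$ and $\sqrt{2\pi l}$ factors that Stirling would supply for $k!$ and $l!$. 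Your bound could therefore replace the prefactor $3n^{3/2}$ in Proposition \ref{prop:key}, although nothing there depends on it, since the factor $e^{-\frac{1}{18}n^{2\epsilon_0}}$ dominates any polynomial.

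Your Robbins-bound remark is also correct, though unnecessary. It can be sharpened: for $k,l\ge 1$ one has $(k-1)(l-1)\ge 0$, i.e.\ $kl\ge n-1$. So $\sqrt{n/(kl)}\le\sqrt{2}$ for $n\ge 2$, and that route likewise gives an $O(n)$ bound rather than $O(n^{3/2})$.
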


\begin{proof}
    We first use the inequalities $\frac{k^k}{k!}<e^k$ and $\frac{l^l}{l!}<e^l$ and obtain
    \[\frac{(k+l+1)!}{k!\cdot l!}\left(\frac{k}{k+l}\right)^k\left(\frac{l}{k+l}\right)^l<\frac{(n+1)\cdot n!\cdot e^n}{n^n}.\]
    Then by the Stirling's formula: $n!\sim \sqrt{2\pi n}(\frac{n}{e})^n$. We have
    \[\frac{(n+1)\cdot n!\cdot e^n}{n^n}\sim \sqrt{2\pi n}(n+1)\lesssim 3n^{3/2}.\]
    Combining the two inequalities, the lemma holds.
\end{proof}

Now the key observation is that by using  Proposition \ref{prop:key} and then carefully choosing the integer pair $(k,l)$ with $n = k+l \to \infty$, we are able to control the support of the family of density functions $\rho_{k,l}$ so that the corresponding sequence of Beta distributions $Be(k+1,l+1)$ (with the density functions $\rho_{k,l}$) converges weakly to a Dirac measure $\delta_x$ for some $x \in [0,1]$, and this will allow us to recover the value of $y'(x)$ for such $x\in [0,1]$ by using the coefficients $S_{k,l}$ of the signature $S(\gamma)$. The following choices of $(k,l)$ will be sufficient for our use.

\begin{definition}\label{def: rational approx}
    Let $x\in [0,1]$. A sequence $(p_n,q_n)\in \mathbb N\times \mathbb N^*$ is called a rational approximation of $x$ if
    $\frac{p_n}{q_n}\in [0,1]$ and as $n\to +\infty$ we have
    \[\lim_{n\to +\infty}q_n\to +\infty\]
    and
    \[\Bigg|\frac{p_n}{q_n}-x\Bigg|\to 0.\]
    A sequence of maps $(p_n,q_n):[0,1]\to \mathbb N\times \mathbb N^*$ is called a uniform rational approximation if $$\frac{p_n(x)}{q_n(x)}\in [0,1]\quad\forall x\in[0,1],$$
    and as $n\to +\infty$ we have
    $$\inf_{x\in [0,1]}q_n(x)\to +\infty$$
    and
$$\sup_{x\in [0,1]}\Bigg|\frac{p_n(x)}{q_n(x)}-x\Bigg|\to 0.$$  
\end{definition}

\begin{example}\label{ex:natural-r-approx}
    For any $x\in [0,1]$, we can take $p_n(x)=\floor{nx}$ and $q_n(x)=n$. Then $(p_n,q_n)$ satisfies
    \[\Bigg|\frac{p_n(x)}{q_n(x)}-x\Bigg|< \frac{1}{q_n(x)}=\frac{1}{n},\]
    hence it is a uniform rational approximation.
\end{example}

\begin{remark}\label{rem:rational-approx}
Examples of uniform rational approximations are closely related to the Diophantine approximation, which we will discuss in further detail in Section \ref{sec:effective}. 
\end{remark}

\begin{remark}\label{rem:r-approx-extend}
    For a given $x_0\in [0,1]$. Suppose $(p_n,q_n)\in \mathbb N\times \mathbb N^*$ is a rational approximation at $x_0$, then we can always extend it to a uniform rational approximation $(p_n,q_n):[0,1]\to \mathbb N\times \mathbb N^*$, simply by setting
    \[\left(p_n(x),q_n(x)\right)=\begin{cases}
        (p_n, q_n) & x=x_0 \\
        (\floor{nx}, n-\floor{nx}) & x\neq x_0
    \end{cases}.\]
\end{remark}

The next theorem verifies that if $(p_n,q_n), n \in \N$
is a rational approximation of some $x \in [0,1]$, then the distributions $\mu_{p_n,q_n-p_n} := Be(p_n+1,q_n - p_n +1)$ converge weakly to the Dirac measure $\delta_x$.

\begin{thm} Given any continuous function $f$ on $[0,1]$. Then
\begin{enumerate}
    \item (Pointwise convergence) for any $x\in[0,1]$ and any rational approximation $(p_n,q_n)\in \mathbb N\times \mathbb N^*$ of $x$, we have
    \[\lim_{n\to \infty }\int_{0}^1f(s)d\mu_{p_n,q_n-p_n}(ds)  = \lim_{n\to \infty }\int_{0}^1\rho_{p_n,q_n-p_n}(s)f(s)ds=f(x).\]
    \item (Uniform convergence) for any uniform rational approximation $(p_n,q_n):[0,1]\to \mathbb N\times \mathbb N^*$, we have
    \[\lim_{n\to \infty }\sup_{x\in [0,1]}\left|\int_{0}^1\rho_{p_n(x),q_n(x)-p_n(x)}(s)f(s)ds-f(x)\right|=0.\]
\end{enumerate}\label{thm:convergence}
\end{thm}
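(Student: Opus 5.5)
Since each $\rho_{k,l}$ is a probability density on $[0,1]$ (it is the density of $Be(k+1,l+1)$), we have $\int_0^1 \rho_{k,l}(s)\,ds = 1$. Hence for $k=p_n$, $l=q_n-p_n$ and $n':=q_n$,
$$
\int_0^1\rho_{k,l}(s)f(s)\,ds - f(x) = \int_0^1 \rho_{k,l}(s)\bigl(f(s)-f(x)\bigr)\,ds .
$$
We argue directly from this identity. Fix $\epsilon_0\in(0,\frac12)$ and split $[0,1]$ into the near region $A=\{s: |s-\frac{p_n}{q_n}|< q_n^{-\frac12+\epsilon_0}\}$ and its complement $A^c$.

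On $A^c$ we apply Proposition \ref{prop:key} with $n$ replaced by $q_n$. Since $q_n\to\infty$, for $n$ large we have $q_n>n_0$, so $\rho_{p_n,q_n-p_n}(s)\le 3q_n^{3/2}e^{-\frac1{18}q_n^{2\epsilon_0}}$ on $A^c$. Consequently
$$
\Bigl|\int_{A^c}\rho(s)\bigl(f(s)-f(x)\bigr)\,ds\Bigr|\le 2\|f\|_\infty\cdot 3q_n^{3/2}e^{-\frac1{18}q_n^{2\epsilon_0}},
$$
and this bound tends to $0$. On $A$ we use that $\rho\ge0$ and $\int_A\rho\le1$, which gives the bound $\sup_{s\in A}|f(s)-f(x)|$. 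Every $s\in A$ satisfies $|s-x|\le q_n^{-\frac12+\epsilon_0}+|\frac{p_n}{q_n}-x|$. Let $\omega_f$ denote the modulus of continuity of $f$, which is finite and satisfies $\omega_f(\delta)\to0$ as $\delta\to0$ because $f$ is uniformly continuous on the compact interval $[0,1]$. Then the near-region term is bounded by $\omega_f\bigl(q_n^{-\frac12+\epsilon_0}+|\frac{p_n}{q_n}-x|\bigr)$, which tends to $0$. This proves part (1).

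For part (2), both bounds depend on $x$ only through $q_n(x)$ and $|p_n(x)/q_n(x)-x|$. The far-region bound $6\|f\|_\infty q^{3/2}e^{-q^{2\epsilon_0}/18}$ is eventually decreasing in $q$, so it is controlled by its value at $\inf_x q_n(x)$ once $n$ is large enough that this infimum exceeds $n_0$ and lies in the monotone regime. The near-region bound is controlled by $\omega_f\bigl((\inf_x q_n(x))^{-\frac12+\epsilon_0}+\sup_x|\frac{p_n(x)}{q_n(x)}-x|\bigr)$. Both controls tend to $0$ by the definition of a uniform rational approximation, so the convergence is uniform in $x$.

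The step most in need of care is the uniformity in the application of Proposition \ref{prop:key}. Its threshold $n_0$ depends only on $\epsilon_0$ and is uniform over all $k$ with $k+l=n$, which is exactly what allows it to be applied simultaneously at every $x$ once $\inf_x q_n(x)>n_0$. The remaining points are routine: the edge cases $p_n=0$ and $p_n=q_n$ are already covered by the proposition, since it is stated for all $k,l\in\N$ with $k+l=n$, including $k=0$ and $l=0$.
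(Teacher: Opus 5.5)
Your proof is correct and follows essentially the paper's route: use $\int_0^1\rho_{k,l}(s)\,ds=1$, split into a near and a far region, bound the far region by Proposition \ref{prop:key} (applied uniformly once $\inf_x q_n(x)>n_0$) and the near region by uniform continuity of $f$. The differences are cosmetic. You center the split at $p_n/q_n$ with shrinking radius $q_n^{-\frac12+\epsilon_0}$, which gives an explicit modulus-of-continuity bound in the spirit of Theorem \ref{thm:quantitative-convergence}, and you prove (1) directly; the paper instead splits at a fixed $\delta_0$ around $x$ with $\epsilon_0=\frac14$, proves (2) first, and deduces (1) via Remark \ref{rem:r-approx-extend} (whose second case should read $(\floor{nx},n)$).
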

\begin{proof}
We first prove (2). Since $f$ is continuous on $[0,1]$, there exists $M>0$ such that $|f(x)|\leq M$. For any $\epsilon>0$, since $f$ is uniformly continuous on $[0,1]$, there exists $\delta_0>0$ such that 
\begin{equation}\label{eq:thm-uconv-0}
    |f(x)-f(s)|<\frac{\epsilon}2
\end{equation}
whenever $x,s\in [0,1]$ satisfies $|x-s|<\delta_0$.

We denote $q_n=\inf_{x\in [0,1]}q_n(x)$, since $q_n\to +\infty$, there exists $n_1>0$ such that $q_n>n_0$ for all $n>n_1$ where $n_0$ is the constant as in Proposition \ref{prop:key}. Then for any $x\in [0,1]$, we can apply Proposition \ref{prop:key} for the parameters $\epsilon_0=1/4$, $k_n=p_n(x)$, $l_n=q_n(x)-p_n(x)$, and it follows that
\begin{equation}\label{eq:thm-uconv-1}
        \rho_{p_n(x),q_n(x)-p_n(x)}(s)\leq 3q_n(x)^{3/2}\exp\left(-\frac{1}{18}q_n(x)^{1/2}\right).
    \end{equation}
whenever $$\Bigg|s-\frac{p_n(x)}{q_n(x)}\Bigg|\geq q_n(x)^{-1/4}.$$
Since 
$$\sup_{x\in [0,1]}\Bigg|\frac{p_n(x)}{q_n(x)}-x\Bigg|\to 0,$$
there exists $n_2>0$ such that when $n>n_2$, we have
\begin{equation}\label{eq:thm-uconv-2}
    \Bigg|\frac{p_n(x)}{q_n(x)}-x\Bigg|<\frac{\delta_0}{2}
\end{equation}
for all $x\in [0,1]$.
Since $q_n\to +\infty$, there exists $n_3>0$ such that when $n>n_3$, we have 
\begin{equation}\label{eq:thm-uconv-3}
    q_n^{-1/4}<\frac{\delta_0}2.
\end{equation}
Hence when $n>\max\{n_2,n_3\}$ and $|s-x|\geq \delta_0$, we have by the triangle inequality that
 \begin{equation}\label{eq:thm-uconv-4}
       \Bigg|s-\frac{p_n(x)}{q_n(x)}\Bigg|\geq \Bigg|s-x\Bigg|-\Bigg|x-\frac{p_n(x)}{q_n(x)}\Bigg|\geq \delta_0- \frac{\delta_0}{2}\geq q_n^{-1/4}\geq q_n(x)^{-1/4},
   \end{equation} 
where the second and third inequalities use \eqref{eq:thm-uconv-2} and \eqref{eq:thm-uconv-3} respectively. Therefore, inequality \eqref{eq:thm-uconv-1} holds whenever $n>\max\{n_1,n_2,n_3\}$ and $|s-x|\geq \delta_0$.

Since the limit $3q^{3/2}\exp(-\frac{1}{18}q^{1/2})\to 0$ as $q\to +\infty$, there exists $q_0>0$ such that when $q_n>q_0$, we have
\begin{equation*}
        3q_n^{3/2}\exp\left(-\frac{1}{18}q_n^{1/2}\right)<\frac{\epsilon}{4M},
\end{equation*}
and since $q_n\to +\infty$ as $n\to +\infty$, there exists $n_4>0$ such that when $n>n_4$ we have $q_n>q_0$. So it follows from $q_n(x)\geq q_n$ that, we have
\begin{equation}\label{eq:thm-uconv-5}
        3q_n^{3/2}(x)\exp\left(-\frac{1}{18}q_n^{1/2}(x)\right)<\frac{\epsilon}{4M},
\end{equation}
for any $x\in [0,1]$ and any $n>n_4$.     

Now we can give the following estimates for any $n>\max\{n_1,n_2,n_3,n_4\}$ and any $x\in [0,1]$ that
    \begin{align*}
        &\quad\;\Bigg|\int_{0}^1\rho_{p_n(x),q_n(x)-p_n(x)}(s)f(s)ds-f(x)\Bigg|\\
        &\leq \int_{0}^1\rho_{p_n(x),q_n(x)-p_n(x)}(s)\Big|f(s)-f(x)\Big|ds\\
        &=\int_{|s-x|<\delta_0}\rho_{p_n(x),q_n(x)-p_n(x)}(s)\Big|f(s)-f(x)\Big|ds\\
        &\quad\quad+\int_{|s-x|\geq\delta_0}\rho_{p_n(x),q_n(x)-p_n(x)}(s)\Big|f(s)-f(x)\Big|ds\\
        &<\frac{\epsilon}2 \int_{|s-x|<\delta_0}\rho_{p_n(x),q_n(x)-p_n(x)}(s)ds+2M \int_{|s-x|\geq \delta_0}\rho_{p_n(x),q_n(x)-p_n(x)}(s)ds\\
        &<\frac{\epsilon}2 +2M\cdot \frac{\epsilon}{4M}\\
        &=\epsilon,
    \end{align*}
    where the second inequality uses \eqref{eq:thm-uconv-0} and the last second inequality uses \eqref{eq:thm-uconv-1}, \eqref{eq:thm-uconv-4} and \eqref{eq:thm-uconv-5}. This completes the proof of (2).

    Finally, (1) follows from (2) in view of Remark \ref{rem:r-approx-extend}.
   \end{proof} 

Now we are ready to prove Theorem \ref{cor:main}, Theorem \ref{thm:main} and Corollary \ref{cor:length}.

\subsection*{Proof of Theorem \ref{cor:main}:} 

For the curve $\gamma(x) = (C_0x, y(x)), x \in [0,1]$ in $\cL^{(1)}(\R^2)$ and for any integer pair $(k,l) \in \N \times \N$, by Proposition \ref{prop:S_kl} we have (recall that we set $m_{k,l}(s) = \frac{s^k(1-s)^l}{k!l!}$)
$$
S_{k,l} = C_0^{k+l}\int_0^1 m_{k,l}(s)y^\prime(s) ds.
$$
Invoking that $\rho_{k,l}(x) = (k+l+1)!m_{k,l}(x)$ (see Definition \ref{def:rho}), it follows that
$$
\int_{0}^1\rho_{k,l}(s)y^\prime(s)ds = \frac{(k+l+1)!}{C_0^{k+l}} S_{k,l}.
$$
From the definition of signature it is clear that $C_0 = S^{(1)}_1(\gamma)$ (for short, also denoted by $S^{(1)}_1$), so we can rewrite that
$$
\int_{0}^1\rho_{k,l}(s)y^\prime(s)ds = \frac{(k+l+1)!}{(S^{(1)}_1)^{k+l}} S_{k,l}.
$$
Now, for a given $x \in [0,1]$, suppose $(p_n,q_n) \in \N \times \N^*, n \in \N$ is a rational approximation of $x$, then for each $n$ we have
$$
\int_{0}^1\rho_{p_n,q_n-p_n}(s)y^\prime(s)ds = \frac{(q_n+1)!}{(S^{(1)}_1)^{q_n}} S_{p_n,q_n-p_n};
$$
and applying Theorem \ref{thm:convergence} to the continuous function $y^\prime(x)$, we get that
$$
\lim_{n \to \infty} \frac{(q_n+1)!}{(S^{(1)}_1)^{q_n}} S_{p_n,q_n-p_n} = \lim_{n \to \infty} \int_{0}^1\rho_{p_n,q_n-p_n}(s)y^\prime(s)ds = y^\prime(x),
$$
which gives the formula \eqref{eq: recovery formula} in Theorem \ref{cor:main}. If $(p_n(\cdot, q_n(\cdot)), n \in \N$ is a uniform rational approximation on $[0,1]$, the corresponding uniform convergence of $\frac{(q_n(\cdot)+1)!}{(S^{(1)}_1)^{q_n(\cdot)}} S_{p_n(\cdot),q_n(\cdot)-p_n(\cdot)}$ to $y^\prime(\cdot)$ can be established similarly by using Theorem \ref{thm:convergence}-(2).

\subsection*{Proof of Theorem \ref{thm:main}:}
Since every axial monotone curve can be converted to an axial linear curve after a suitable reparameterization, we can apply the above inversion scheme (for axial linear curves) to recover axial monotone curves up to time-change. Note that here the curve $\gamma$ can be defined on any compact interval $[a,b]$.

Without loss of generality we assume that the curve $\gamma \in C^1([a,b],\R^d)$ is $x_1$-increasing. Since $x_1'(t)>0$, by the inverse function theorem, there exists a $C^1$ increasing function $t(\cdot) = x_1^{-1}(\cdot): [x_1(a),x_1(b)] \to [a,b]$ inverse to $x_1(t)$. Now we reparameterize $\gamma$ by the function $t(\cdot)$, so the resulting curve is now written as
\[(\gamma\circ t)(u) =(u, \tilde x_2(u),\cdots, \tilde x_d(u)), u\in[x_1(a),x_1(b)],\]
where $\tilde x_j(u) := (x_j \circ t)(u)$ for $j \neq 1$.
Of course it holds that $S(\gamma) = S(\gamma \circ t)$ as the signature is unchanged under reparameterization. Take another reparameterization $u(\cdot): [0,1] \to [x_1(a),x_1(b)]$ by $u(s)=C_0s+x_1(a)$ with $C_0=x_1(b)-x_1(a)>0$, then we obtain a further reparameterization of $\gamma$:
\[(\gamma \circ t \circ u)(s) =\left(C_0s+x_1(a), \tilde x_2(C_0s+x_1(a)),\cdots, \tilde x_d(C_0s+x_1(a))\right),\]
for $s \in [0,1]$. 
Since the signature does not change under the translations on $\mathbb R^d$, we have 
\begin{equation}\label{eq:thm-main-1}
    S(\gamma)=S(\eta)
\end{equation}
where
\[\eta(s)=\left(C_0s, \tilde x_2(C_0s+x_1(a)),\cdots, \tilde x_d(C_0s+x_1(a))\right),\; s\in[0,1]\]
is simply the translation on the $x_1$-axis of the reparameterized curve $\gamma \circ t \circ u$ by $-x_1(a)$ unit.

For this axial linear curve $\eta \in \cL^{(1)}(\R^d)$, and $j\in \{2,\cdots, d\}$, if we denote $P_j:\mathbb R^d\to \mathbb R^2$ the natural projection onto the $(x_1,x_j)$-coordinates, then from the definition of signature, we simply have the relation that
\begin{equation}\label{eq:thm-main-2}
    S_{k,l}^{(j;1)}(\gamma)=S_{k,l}^{(j;1)}(\eta) = S_{k,l}(P_j\circ \eta),
\end{equation}
where $S_{k,l}^{(j;1)}(\gamma) =  S_{k,l}^{(j;1)}(\eta) = \langle e_1^{\otimes k}\otimes e_j\otimes e_1^{\otimes l}, S(\gamma) \rangle$ and $S_{k,l}(P_j\circ \eta) = \langle e_1^{\otimes k}\otimes e_2\otimes e_1^{\otimes l}, S(P_j \circ \eta) \rangle$.
Now the projected curve $P_i\circ\eta$ has the form
\[P_j\circ\eta(s)=\left(C_0s, \eta_j(s)\right),\]
where $\eta_j(s) = \tilde x_j( C_0 s + x_1(a))$ for $j=2,\ldots,d$ and $C_0 = x_1(b) - x_1(a) = \langle e_1, S(\gamma) \rangle = S^{(1)}_1(\gamma)$. 
Now we can apply Theorem \ref{cor:main} to obtain for any $j =2,\ldots,d$,
\begin{equation}\label{eq:thm-main-3}
   \eta_j^\prime(x) =\lim_{n\rightarrow \infty}\frac{(q_n+1)!\cdot S_{p_n,q_n-p_n}(P_j\circ\eta)}{(S^{(1)}_1(\gamma))^{q_n}} = \lim_{n\rightarrow \infty}\frac{(q_n+1)!\cdot S^{(j;1)}_{p_n,q_n-p_n}(\gamma)}{(S^{(1)}_1(\gamma))^{q_n}}
\end{equation}
whenever $(p_n,q_n)\in \mathbb N\times \mathbb N^*$ is a rational approximation of $x \in [0,1]$, and also that
\begin{equation}\label{eq:thm-main-3-u}
    \lim_{n\rightarrow \infty}\sup_{s\in [0,1]}\Bigg|\eta_j'(s)-\frac{(q_n(s)+1)!\cdot S^{(j;1)}_{p_n(s),q_n(s)-p_n(s)}(\gamma)}{(S^{(1)}_1(\gamma))^{q_n(s)}}\Bigg|=0
\end{equation}
whenever $(p_n,q_n):[0,1]\to \mathbb N\times \mathbb N^*$ is a uniform rational approximation.

Now noting that for any $j=2,\ldots,d$, 
$$
\eta_j^\prime(s) = \frac{d}{ds}\tilde x_j(C_0s + x_1(a)) = \frac{d}{ds} (x_j \circ t) (C_0s + x_1(a))
$$
with $t(\cdot) = x_1^{-1}(\cdot)$ being the inverse of $x_1(\cdot)$, by the chain rule we obtain that for $x \in [0,1]$,
$$
\eta_j^\prime(x) = x_j^\prime(t_x) \frac{C_0}{x_1^\prime(t_x)},\quad t_x := x_1^{-1}(C_0x + x_1(a)) = x_1^{-1}((1-x)x_1(a) + x x_1(b)).
$$
Finally, substituting the above into \eqref{eq:thm-main-3}, \eqref{eq:thm-main-3-u}, and using \eqref{eq:thm-main-1}, \eqref{eq:thm-main-2}, together with the fact that $C_0=x_1(b)-x_1(a)=S^{(1)}_1(\gamma)$, we obtain
\[\frac{x_j'(t_x)}{x_1'(t_x)}= \frac{1}{C_0}\eta_j^\prime(x) = \lim_{n\rightarrow \infty}\frac{(q_n+1)!\cdot S_{p_n,q_n-p_n}^{(j;1)}(\gamma)}{(S^{(1)}_1(\gamma))^{q_n+1}},\]
whenever $(p_n,q_n)\in \mathbb N\times \mathbb N^*$ is a rational approximation of $x \in [0,1]$
and note that $t_x \in [a,b]$ is the unique point such that $x_1(t_x) = (1-x)x_1(a) + xx_1(b)$.
The above limit is uniform whenever $(p_n,q_n): [0,1]\to \mathbb N\times \mathbb N^*$ is a uniform rational approximation.

Therefore, the theorem holds true for $i=1$. The general $x_i$-increasing case is similar, and the proof is now complete.
\begin{remark}
    Since $S(\gamma)$ does not depend on the parameterization, recovering the ratio $x_j'/x_i'$ is the best hope. On the other hand, we do not need the full information of $S(\gamma)$ to recover this ratio. Indeed, $\gamma'$ is uniquely determined (upto reparameterization) by the information of the signature coefficients of the tensor form $e_i$ and $e_i^{\otimes k}\otimes e_j\otimes e_i^{\otimes l}$ for all $j\neq i$ and $k,l\in \mathbb N$.
\end{remark}

\subsection*{Proof of Corollary \ref{cor:length}:}
Since both the length $L(\gamma) = \int_a^b |\gamma^\prime(s)|ds$ (which appears on the left hand side) and the signature $S(\gamma)$ (which appears on the right hand side) stay invariant under reparameterization and translation, for any $x_i$-increasing curve $\gamma$ we may assume without loss of generality that $\gamma$ is defined on $[0,1]$ and 
$$x_i(s)=C_0s,\;s\in [0,1],$$
where $C_0=x_i(1) - x_i(0) = S^{(1)}_i(\gamma) = \langle e_i, S(\gamma) \rangle >0$ by considering the reparameterization as in the proof of Theorem \ref{thm:main}. We now take the uniform rational approximation 
\[(p_n(s),q_n(s))=(\floor{ns},n)\]
as in Example \ref{ex:natural-r-approx}. According to Theorem \ref{thm:main}, we have for any $j\neq i$
\[\lim_{n\to \infty}\sup_{s\in [0,1]}\Bigg|x_j'(s)-\frac{(n+1)!\cdot S^{(j;i)}_{p_n(s),n-p_n(s)}}{C_0^{n}}\Bigg|=0.\]
By implementing $s=\frac{k}{n}$ for each $k=1,\cdots, n$, we see that for any $\epsilon>0$, there exists $n_1>0$ such that when $n>n_1$, we have
\begin{equation}\label{eq:cor-length-1}
\Bigg|\left(x_j'(k/n)\right)^2-\left(\frac{(n+1)!}{C_0^{n}}\right)^2\cdot (S^{(j;i)}_{k,n-k})^2\Bigg|<\frac{C_0\cdot \epsilon}{2d}
\end{equation}
for any $k=1,\cdots, n$, any $j\neq i$. It follows that for any $k=1,\cdots, n$ and any $n>n_1$, we have
\begin{align}\label{eq:cor-length-2}
\begin{split}
    &\quad\;\Bigg||\gamma'(k/n)|-\sqrt{C_0^2+\left(\frac{(n+1)!}{C_0^{n}}\right)^2\cdot \sum_{j\neq i}(S^{(j;i)}_{k,n-k})^2}\;\Bigg|\\
    &=\frac{\Bigg|\sum_{j=1}^d \left(x_j'(k/n)\right)^2-\left(C_0^2+\left(\frac{(n+1)!}{C_0^{n}}\right)^2\cdot \sum_{j\neq i}(S^{(j;i)}_{k,n-k})^2\right)\Bigg|}{|\gamma'(k/n)|+\sqrt{C_0^2+\left(\frac{(n+1)!}{C_0^{n}}\right)^2\cdot \sum_{j\neq i}(S^{(j;i)}_{k,n-k})^2}}\\
    &\leq \frac{1}{C_0}\sum_{j\neq i}\Bigg|\left(x_j'(k/n)\right)^2-\left(\frac{(n+1)!}{C_0^{n}}\right)^2\cdot (S^{(j;i)}_{k,n-k})^2\Bigg|\\
    &< \frac{\epsilon}2,
    \end{split}
\end{align}
where the second inequality uses the fact that the denominator is bounded below by $C_0$, and the last inequality uses \eqref{eq:cor-length-1}. 

Finally, since $|\gamma'|$ is continuous, it is Riemann integrable, and there exists $n_2>0$ such that when $n>n_2$, we have
\[\Bigg|\int_{0}^1|\gamma'(s)|ds-\sum_{k=1}^n|\gamma'(k/n)|\cdot \frac{1}{n}\Bigg|<\frac{\epsilon}{2}.\]
Combining with \eqref{eq:cor-length-2}, we have for all $n>\max\{n_1,n_2\}$ that
\[\Bigg|\int_{0}^1|\gamma'(s)|ds-\sum_{k=1}^n \sqrt{C_0^2+\left(\frac{(n+1)!}{C_0^{n}}\right)^2\cdot \sum_{j\neq i}(S^{(j;i)}_{k,n-k})^2}\cdot \frac{1}{n}\Bigg|<\epsilon.\]
Noting $S_i=C_0$ and $$L(\gamma)=\int_{0}^1|\gamma'(s)|ds,$$ we have
\begin{align*}
    L(\gamma)&=\lim_{n\rightarrow \infty}\sum_{k=1}^{n}\sqrt{C_0^2+\left(\frac{(n+1)!}{C_0^{n}}\right)^2\cdot \sum_{j\neq i}(S^{(j;i)}_{k,n-k})^2}\cdot \frac{1}{n}\\
&=\lim_{n\rightarrow \infty}\sum_{k=1}^{n}\sqrt{\sum_{j=1}^d\left(S^{(j;i)}_{k,n-k}\right)^2
}\cdot\frac{n!}{C_0^n},
\end{align*}
where the last equation uses the fact that for the linear function $x_i(s) = C_0s$, one has 
\[S^{(i;i)}_{k,n-k}= \langle e_i^{\otimes (n+1)}, S(\gamma) \rangle = \frac{C_0^{n+1}}{(n+1)!}.\]
This completes the proof by noting that $C_0 = S^{(1)}_i(\gamma)$. \\

Before moving to the next subsection to investigate the regularity of the signature inversion, let us put here two remarks regarding some similar ideas mentioned in the existing literature.

\begin{remark}\label{remark: TD}
For a two dimensional $x_1$-linear $C^1$-curve $\gamma(t) = (t, y(t))$, it can be shown that, see \cite[Lemma 2.7]{TD2022}, for the integer pair $(k,l) = (0,l)$ with $l \in \N^*$, the coefficient $S_{0,l}(\overleftarrow{\gamma}) = \langle e_2 \otimes e_1^{\otimes l}, S(\gamma)^{-1}\rangle$ satisfies that
$$
S_{0,l}(\overleftarrow{\gamma}) = (y, m_{0,l})_{L^2},
$$
for $m_{0,l}(s) = \frac{1}{l!}s^l$. Therefore, since the (shifted and normalized) Legendre polynomials $F_m(\cdot), m \in \N^*$ have the form $F_m(s) = \sum_{l \le m} a_{l,m}s^l, s \in [0,1]$ with well-known coefficients $a_{l,m}$ and these polynomials form an orthonormal basis of $L^2([0,1])$, one can express the projection of the second coordinate function $y(\cdot)$ onto $\text{span}\{F_m, m \le K\}$, namely $\sum_{m \le K}(y, F_m)_{L^2} F_m$, by using the signature coefficients $S_{0,l}(\overleftarrow{\gamma}) = (y, m_{0,l})_{L^2} = \frac{1}{l!}(y, s^l)$, for any $K \in \N^*$, and by letting $K \to \infty$ one can recover the representation of $y(\cdot) \in L^2([0,1])$ with respect to this orthonormal basis. We refer readers to \cite{TD2022} for more details on such reconstruction. So, to summarize, for an $x_i$-linear curve $\gamma = (x_1(\cdot), \cdots, x_d(\cdot))$, 
\begin{itemize}
    \item by only considering its signature coefficients $S^{(j;i)}_{k,l}$ for $k=0$ and $l \in \N^*$ one can already recover all coefficients of the $j$-th coordinate function $x_j(\cdot)$ in $L^2([0,1])$ with respect to a specified orthonormal basis (we may call it an ``$L^2$-inversion'');
    \item if we use $S^{(j;i)}_{k,l}$ for those integer pairs $(k,l) \in \N \times \N^*$ with $\frac{k}{k+l} \to x$ for some $x \in [0,1]$, then we can recover $x_j^\prime(x)$, the evaluation of $x^\prime_j(\cdot)$ at $x$, as we presented in Theorem \ref{cor:main} and Theorem \ref{thm:main}, which can be viewed as a ``pointwise inversion'';
    \item by using those signature coefficients $S^{(j;i)}_{k,n-k}$ for $k \in \N$ and $n \in \N^*$, we can recover the length of $\gamma$, see Corollary \ref{cor:length}.
\end{itemize}  
\end{remark}

\begin{remark}\label{remark: Lyons}
 In \cite{ChangLyons2019} the authors introduced a very interesting and effective insertion algorithm to solve the signature inversion problem for certain class of curves. Very roughly speaking, for an (almost everywhere) differentiable curve $\gamma: [0,1] \to \R^d$ of bounded variation which is parameterized at unit speed, for any $n \in \N^*$ and any $p = 1,\ldots, n+1$, the authors define a mapping $I_{p,n}: \R^d \to (\R^d)^{\otimes (n+1)}$ via
 $$
 I_{p,n}(v) = n!\int_{0<u_1<\ldots<u_n<1} f(u_1) \otimes \cdots \otimes f(u_{p-1}) \otimes v \otimes f(u_p) \otimes \cdots f(u_n) du_1\cdots du_n
 $$
 (where $f := \gamma^\prime$), that is, the function $I_{p,n}$ inserts a vector $v \in \R^d$ into the position between $p-1$ and $p$ in the $n$-th iterated integral of $\gamma$. Then, if $\gamma$ is linear over a sub-interval $[s,t] \subset [0,1]$, then for any $\theta \in (s,t)$, by choosing $p_n = \floor{\theta(n+2)}$ for all $n \in \N^*$ (note that $\frac{p_n}{n} \to \theta$, cf. Example \ref{ex:natural-r-approx}) and let
 $$
 v^*_{\theta,n} := \text{argmin}_{v \in \R^d, |v|=2} \|I_{p_n,n}(v) - (n+1)!S^{(n+1)}\|_{proj},
 $$
 one can recover $\gamma^\prime(\theta)$ by $\gamma^\prime(\theta) = \lim_{n \to \infty} v^*_{\theta,n}$, see \cite[Theorem 5.3]{ChangLyons2019}. In the proof of the above result, the authors used the Hoeffding's inequality for binomial distributions (see \cite[Theorem 3.1, Theorem 3.2]{ChangLyons2019}), which shares a similar idea as our approach and can be compared with Proposition \ref{prop:key} for establishing ``the deviation'' for the probabilities $\rho_{k,l}$ when $k = p_n = \floor{\theta(n+2)}$ and $l=n+1 - k$. Note that in \cite{ChangLyons2019} the insertion mapping $I_{p,n}(v)$ inserts the variable $v$ into the iterated integrals induced by a function $f$, while in our inversion scheme we use coefficients $S_{k,l}$ which can be intuitively thought of as inserting a function $f$ into the iterated integrals generated by the time variable $t$.
\end{remark}

\section{Regularity of $S^{-1}$}\label{sec: regularity of signature inversion}

In the last section we construct a signature inversion scheme for axial linear curves, which can be viewed as a mapping $\cI: G^{(i)}(\R^d) \to \cL^{(i)}(\R^d)$ with $G^{(i)}(\R^d) = S(\cL^{(i)}(\R^d))$ being the image of $x_i$-linear curves under the signature $S$, so that for any element $\mathbf S \in G^{(i)}(\R^d)$ , one has for $j \neq i$, the $j$-th coordinate function of the curve $\cI(\mathbf S) \in \cL^{(i)}(\R^d)$ satisfies that
$$
\forall  s \in [0,1], \quad \cI(\mathbf S)_j^\prime(s) = \lim_{n\rightarrow \infty}\frac{(q_n(s)+1)!\cdot S_{p_n(s),q_n(s)-p_n(s)}^{(j;i)}}{(S^{(1)}_i)^{q_n+1}}
$$
where we use the suggestive notation $S_{k,l}^{(j;i)}$ for the coefficient $\langle e_i^{\otimes k}\otimes e_j\otimes e_i^{\otimes l}, \mathbf S \rangle$ with respect to the tensor basis $e_i^{\otimes k}\otimes e_j\otimes e_i^{\otimes l}$ and $S^{(1)}_i$ for $\langle e_i, \mathbf S \rangle$, provided $(p_n(\cdot),q_n(\cdot)) \in \N \times \N^*, n \in \N$ is a (uniform) rational approximation. Of course if $\mathbf S = S(\gamma)$ for some $\gamma \in \cL^{(i)}(\R^d)$, then $\cI(\mathbf S) = \cI(S(\gamma)) = \gamma$: that is, $\cI$ gives a description of the signature inverse $S^{-1}$ on $G^{(i)}(\R^d)$.

Then there is a natural question that whether our inversion scheme $\cI$ is continuous; that is, we want to investigate that if two signatures in $G^{(i)}(\R^d)$ are closed to each other, then whether the two reconstructed curves by the inversion procedure $\cI$ also have similar traces. Of course the answer to this question depends crucially on the topologies on $G^{(i)}(\R^d)$ and $\cL^{(i)}(\R^d)$. So let us first briefly review the topologies which are used in establishing the continuity of the signature $S$ in standard literature.

\subsection{Preliminaries on topologies in signature theory}

Recall that given a finite dimensional Euclidean space $V=\mathbb R^d$ with norm $|\cdot|$, the space of $n$-tensors $V^{\otimes n}$ is naturally endowed with a projective norm $||\cdot||_{proj}$ given by
\[||v||_{proj}:=\min\left\{\sum_{j=1}^k|u^{(j)}_{1}|\cdots|u^{(j)}_{n}|\;:\;v=\sum_{j=1}^k u^{(j)}_{1}\otimes\cdots \otimes u^{(j)}_{n},u^{(j)}_{i}\in V\right\}.\]
It is well known that these projective norms are admissible (see \cite{Lyons2007}) in the sense that it satisfies
\begin{enumerate}
    \item for all $\sigma \in S(n)$, for all $v \in V^{\otimes n}$, it holds that
    $$
    \|\sigma(v)\|_{proj} = \|v\|_{proj},
    $$
    where $S(n)$ is the symmetric group on $n$ letters;
    \item for all $n,m \in \N^*$, for all $v \in V^{\otimes n}$, $w \in V^{\otimes m}$, 
    $$
    \|v \otimes w\|_{proj} = \|v\|_{proj}\|w\|_{proj}.
    $$
\end{enumerate}

Thus, we can define a ``projective norm'' on $T((V))$ by
\[||a||_{proj} :=\sum_{n=1}^\infty ||a_n||_{proj}\]
where $a=\sum_{n=0}^\infty a_n$ such that $a_n\in V^{\otimes n}$. For now, the value $||a||_{proj}$ can be $+\infty$ for a general $a \in T((V))$. However, it is known that if $a = S(\gamma)$ is the signature of some $\gamma \in \cP^1(\R^d)$, then by the well-known factorial decay of signature components (see e.g. \cite[Theorem 3.7]{Lyons2007})
\[||S^{(n)}(\gamma)||_{proj} \leq \frac{L(\gamma)^n}{n!},\]
 it holds that $\|S(\gamma)\|_{proj} < \infty$. Now we define a topology $\tau_{proj}$ on the image set $\tilde G(\R^d) := S(\cP^1(\R^d)) \subset T((\R^d))$ 
by using $\|\cdot\|_{proj}$, namely $\tau_{proj}$ is generated by the metric $\rho_{proj}(a, b) := \|a - b\|_{proj}$ for $a,b \in \tilde G(\R^d)$.

Now we turn to the path space $\cP^1(\R^d)$. On this space we consider two typical metrics, one is the uniform norm metric on the derivatives, denoted by $d_{C^1}$ such that for $\gamma_i \in \cP^1(\R^d), i =1,2$,
\[d_{C^1}(\gamma_1,\gamma_2):=\sup_{s\in[0,1]}|\gamma_1'(s)-\gamma_2'(s)|:= \|\gamma_1'(s)-\gamma_2'(s)\|_\infty.\]

The other metric on the path space $\cP^1(\R^d)$, which is in fact classical in the rough path theory, is the bounded variation metric, that is, 
$$
d_{BV}(\gamma_1, \gamma_2) = \|\gamma_1 - \gamma_2\|_{BV} := \int_0^1 |\gamma_1^\prime(t) - \gamma_2^\prime(t)| dt = \|\gamma^\prime_1 - \gamma^\prime_2\|_{L^1}
$$
for $\gamma_1, \gamma_2 \in \cP^1(\R^d)$. Clearly the topology induced by $d_{C^1}$ is finer than the counterpart induced by $d_{BV}$.

Clearly, every curve $\gamma \in \cP^1(\R^d)$ also has a finite $p$-variation for any $p \ge 1$, Recall that the $p$-variation norm of $\gamma$ is given by
$$
\|\gamma\|_{p-\var} = \sup_{\cP([0,1])} \bigg(\sum_{[s,t] \in \cP} |\gamma_t - \gamma_s|^p\bigg)^{1/p},
$$
where the supremum is taken over all possible partitions $\cP([0,1])$ of $[0,1]$. Hence we may also endow $\cP^1(\R^d)$ with the $p$-variation metric for any $p\ge 1$, i.e., $d_{p-\var}(\gamma_1,\gamma_2) = \|\gamma_1 - \gamma_2\|_{p-\var}$ and note that the above $d_{BV}$ is the $1$-variation metric.
A fundamental result in the rough path/signature theory is that the signature map is continuous for $p$-variation metric on the path space (where $p$ depends on the regularity of the underlying path) and $\tau_{proj}$ (actually, any topology induced by an admissible norm) on the tensor algebra, which is called the stability of the Lyons' extension. For our use we state a version of this theorem for $C^1$-curves below, see \cite[Theorem 3.1.3]{LyonsQian2007} or \cite[Theorem 9.10]{FrizVictoir2010}.

\begin{thm}\label{thm:cont-of-S}
    The signature map $S:\mathcal P^1(\mathbb R^d)\to (T((\mathbb R^d)),\tau_{proj})$ is locally Lipschitz continuous when $\cP^1(\R^d)$ is equipped with the $p$-variation metric $d_{p-\var}$ for any $p \in [1,2)$.
\end{thm}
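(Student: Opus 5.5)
The last statement is Theorem \ref{thm:cont-of-S}: the signature map is locally Lipschitz from $(\cP^1(\R^d), d_{p-\var})$ to $(T((\R^d)),\tau_{proj})$ for $p\in[1,2)$. The paper quotes this from the literature, so I only sketch how I would prove it. I would first do the case $p=1$, where everything is elementary, and then explain how Lyons' extension theorem gives $p\in(1,2)$.

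\textbf{Step 1 (graded Lipschitz bound).} Fix $\gamma_1,\gamma_2$ with $\|\gamma_i\|_{BV}\le L$ and set $\delta:=\|\gamma_1-\gamma_2\|_{BV}$. I will prove by induction on $n$ that
$$\|S^{(n)}(\gamma_1)-S^{(n)}(\gamma_2)\|_{proj}\le \delta\,\frac{L^{n-1}}{(n-1)!}.$$
Write the level-$n$ signature of $\gamma$ on $[0,t]$ as $S^{(n)}_{0,t}(\gamma)=\int_0^t S^{(n-1)}_{0,u}(\gamma)\otimes\gamma'(u)\,du$. Then
$$S^{(n)}_{0,t}(\gamma_1)-S^{(n)}_{0,t}(\gamma_2)=\int_0^t\big(S^{(n-1)}_{0,u}(\gamma_1)-S^{(n-1)}_{0,u}(\gamma_2)\big)\otimes\gamma_1'(u)\,du+\int_0^t S^{(n-1)}_{0,u}(\gamma_2)\otimes(\gamma_1'-\gamma_2')(u)\,du.$$
I estimate each integral using the two admissibility properties of $\|\cdot\|_{proj}$ (permutation invariance and $\|v\otimes w\|_{proj}=\|v\|_{proj}\|w\|_{proj}$), together with the factorial decay $\|S^{(n-1)}_{0,u}(\gamma_2)\|_{proj}\le \ell_2(u)^{n-1}/(n-1)!$. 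Here $\ell_i(u)$ denotes the length of $\gamma_i$ on $[0,u]$.

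\textbf{Step 2 (the induction, which is where I expect the difficulty).} A crude induction loses a factor of $n$ at each level, so the hypothesis must be sharpened. I would prove the parametrized bound
$$\|S^{(n)}_{0,t}(\gamma_1)-S^{(n)}_{0,t}(\gamma_2)\|_{proj}\le \sum_{a+b=n-1}\frac{\ell_1(t)^a\,\ell_2(t)^b}{a!\,b!}\,\delta(t),\qquad \delta(t):=\int_0^t|\gamma_1'-\gamma_2'|,$$
or a similar integral form that tracks $\ell_1,\ell_2,\delta$ as functions of $t$. The increments then integrate exactly as beta-type integrals, and the induction closes. Summing over $a+b=n-1$ gives at most $(2L)^{n-1}\delta/(n-1)!$. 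Summing over $n$ then yields
$$\|S(\gamma_1)-S(\gamma_2)\|_{proj}\le e^{2L}\,\delta,$$
which is the local Lipschitz property for $p=1$.

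\textbf{Step 3 ($1<p<2$).} Here the BV norm no longer controls the distance, so I would use Lyons' extension theorem. The first step is to pass from the path to its level-$2$ lift $(S^{(1)},S^{(2)})$ and show that this lift is locally Lipschitz in $p$-variation, using Young integration since $p<2$. Lyons' extension theorem then says that the higher levels depend locally Lipschitz on the first two. Its proof follows the same neoclassical-inequality induction as Step 2, with $\ell_i$ replaced by a control $\omega$ and the factorials replaced by $\beta(n/p)!$. Summability of $\sum_n \omega^{n/p}/\beta(n/p)!$ is what makes the total projective norm finite and gives the Lipschitz constant.
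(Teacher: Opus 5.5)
Your proposal is correct. It differs from the paper mainly in that the paper gives no argument at all: it quotes the result from \cite{LyonsQian2007} and \cite{FrizVictoir2010}.

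For $p=1$ you supply a self-contained proof, and the induction in Step 2 closes exactly. Since $\ell_2$ and $\delta$ are nondecreasing, the first integral is at most $\sum_{a+b=n-2}\frac{\ell_1(t)^{a+1}\ell_2(t)^b}{(a+1)!\,b!}\delta(t)$, which gives the terms with $a\ge 1$ at level $n$. The second integral is at most $\frac{\ell_2(t)^{n-1}}{(n-1)!}\delta(t)$, which is the $a=0$ term. This yields $\|S(\gamma_1)-S(\gamma_2)\|_{proj}\le e^{2L}\|\gamma_1-\gamma_2\|_{BV}$, an explicit Lipschitz constant on BV-balls that the citation does not provide.

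A few minor points:
\begin{enumerate}
\item Step 2 delivers $(2L)^{n-1}\delta/(n-1)!$, not the bound $L^{n-1}\delta/(n-1)!$ announced in Step 1. This is harmless. If you want the sharper bound, differentiate $\theta\mapsto S^{(n)}(\theta\gamma_1+(1-\theta)\gamma_2)$ and use convexity of length.
\item In Step 3 the detour through the level-$2$ lift is unnecessary. Since $\lfloor p\rfloor=1$, the extension theorem already starts from level $1$.
\item To get Lipschitz dependence rather than mere continuity in Step 3, you should name the control. For example, take $\omega(s,t)=\|\gamma_1\|_{p-\var,[s,t]}^p+\|\gamma_2\|_{p-\var,[s,t]}^p+\epsilon^{-p}\|\gamma_1-\gamma_2\|_{p-\var,[s,t]}^p$ with $\epsilon=d_{p-\var}(\gamma_1,\gamma_2)$. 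Then $\omega(0,1)$ is bounded on $p$-variation balls, and the continuity estimate of the extension theorem is linear in $\epsilon$.
\end{enumerate}

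Note, however, that the only place the paper uses this theorem is Example \ref{example:discont of inverse}, and that example needs $p\in(1,2)$ because there $d_{BV}(\gamma_n,\gamma)\not\to 0$. On that range your proposal rests on exactly the extension theorem the paper cites. So your elementary part adds explicitness, but it does not cover the case the paper actually needs.
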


However, Theorem \ref{thm:cont-of-S} implies that the signature inverse $$
S^{-1}: (G^{(i)}(\R^d), \tau_{proj}) \to (\cL^{(i)}(\R^d), d_{BV})
$$
is in general discontinuous when $\cL^{(i)}(\R^d)$ is endowed with the bounded variation metric (and therefore discontinuous for $d_{C^1}$ as well). The key ingredient here is that one can always construct a sequence of $C^1$-curves $\gamma_n, n \in \N^*$ such that they converges to some curve $\gamma$ in the $p$-variation topology for some $2 > p > 1$ but the convergence cannot happen for the bounded variation topology, then by Theorem \ref{thm:cont-of-S}, $S(\gamma_n) \to S(\gamma)$ in $\tau_{proj}$, but $\gamma_n$ does not tend to $\gamma$ due to the construction, so that $S^{-1}$ is discontinuous at $\gamma$. We give such an example below.

\begin{example}\label{example:discont of inverse}
  For every $n \in \N^*$ we define
  $$
  \gamma_n (t) = \left(t, \frac{1}{2n\pi}\cos(2n\pi t), \frac{1}{2n\pi}\sin (2n\pi t)\right)
  $$
  for $t \in [0,1]$, so that $\gamma_n \in \cL^{(1)}(\R^3)$. It is obvious that for $\gamma(t):=(t,0,0) \in \cL^{(1)}(\R^3)$, we have $\lim_{n \to \infty}\|\gamma_n - \gamma\|_\infty = 0$; moreover, for all $n \in \N^*$, we also have $\|\gamma_n\|_{BV} = \sqrt{2}$. As a consequence of the interpolation theorem \cite[Lemma 8.16]{FrizVictoir2010} it follows that
  $$
  \lim_{n \to \infty} \|\gamma_n - \gamma\|_{p-\var} = 0
  $$
  for any $p \in (1,2)$, and thus by Theorem \ref{thm:cont-of-S} it holds that $S(\gamma_n) \to S(\gamma)$. However, since $\|\gamma\|_{BV}= 1 \neq \sqrt{2} = \|\gamma_n\|_{BV}$, the sequence $\gamma_n$ cannot converge to $\gamma$ in the bounded variation metric. This means that with $\mathbf S_n := S(\gamma_n)$ and $\mathbf S  := S(\gamma)$, we have $\|\mathbf S_n - \mathbf S\|_{proj} \to 0$ while $d_{BV}(S^{-1}(\mathbf S_n), S^{-1}(\mathbf S)) = d_{BV}(\gamma_n, \gamma) \nrightarrow 0$. 
\end{example}

The above observation implies that, if $G^{(i)}(\R^d)$ is equipped with $\tau_{proj}$, then the continuity of  $S^{-1}$ can only happen when it is restricted to  certain proper subset of $G^{(i)}(\R^d)$. In the next subsection we will find such a good subset, which is related to the equicontinuity of curves.

\subsection{The regularity of $S^{-1}$ for the projective norm}
We recall the following definition.
\begin{definition}
    Let $x\in [0,1]$, we say a family of continuous maps $f_m:[0,1]\to \mathbb R^d, m\in \mathbb N^*$ is equicontinuous at some $x \in [0,1]$ if for any $\epsilon>0$, there exists $\delta>0$ (not depending on $m$) such that for any $m\in \mathbb N^*$, and any $y\in [0,1]$ with $|x-y|<\delta$, we have
    \[|f_m(y)-f_m(x)|<\epsilon.\]
    We say a family of continuous maps $f_m:[0,1]\to \mathbb R^d, m\in \mathbb N^*$ is uniformly equicontinuous if for any $\epsilon>0$, there exists $\delta>0$ (not depending on $m$) such that for any $m\in \mathbb N^*$, and any $x,y\in [0,1]$ with $|x-y|<\delta$, we have
    \[|f_m(y)-f_m(x)|<\epsilon.\]
\end{definition}

The next theorem shows that the equicontinuity is a sufficient condition to guarantee the pointwise continuity of our inversion scheme $\cI$ (and therefore, the signature inversion $S^{-1}$) with respect to the topology $\tau_{proj}$ on $G^{(i)}(\R^d)$.

\begin{thm}\label{thm:S-inv-pointwise}
    Let $\gamma_m\in \mathcal L^{(i)}(\mathbb R^d)$ be a family of $x_i$-linear $C^1$-curves in $\mathbb R^d$, where $m\in \mathbb N$. Suppose
    \begin{enumerate}
        \item the family $\gamma_m'$ ($m\in \mathbb N$) is equicontinuous at $x_0\in [0,1]$,
        \item the family $\gamma_m'$ ($m\in \mathbb N$) is uniformly bounded on $[0,1]$,
        \item $||S(\gamma_m)-S(\gamma_0)||\to 0 \textrm{ as }m\to \infty$,
    \end{enumerate}
    then $\gamma_m'(x_0)$ converges to $\gamma_0'(x_0)$ as $m\to \infty$.
\end{thm}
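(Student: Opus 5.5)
Fix $j\neq i$ and write $\gamma_m(s)=(\ldots,C_m s,\ldots)$ with $i$-th coordinate $C_m s$, where $C_m=S^{(1)}_i(\gamma_m)$. The idea is a three-term ($\epsilon/3$) argument built on the identity used in the proof of Theorem \ref{cor:main}. For any $(k,l)$ with $k+l=n$ that identity reads
\[
\int_0^1\rho_{k,l}(s)\,x_{j,m}'(s)\,ds=\frac{(n+1)!}{C_m^{\,n}}\,S^{(j;i)}_{k,l}(\gamma_m).
\]
We choose $(k,l)=(p_n,q_n-p_n)$ with $(p_n,q_n)=(\lfloor nx_0\rfloor,n)$, which is a rational approximation of $x_0$ as in Example \ref{ex:natural-r-approx}. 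Then we split
\[
|x_{j,m}'(x_0)-x_{j,0}'(x_0)|\le A_{m,n}+B_{m,n}+A_{0,n},
\]
where $A_{m,n}=\bigl|x_{j,m}'(x_0)-\int\rho_{p_n,n-p_n}x_{j,m}'\bigr|$ and $B_{m,n}$ is the difference of the two signature expressions $\frac{(n+1)!}{C^n}S^{(j;i)}_{p_n,n-p_n}$ for $\gamma_m$ and $\gamma_0$. The $i$-th component needs no work: $\gamma_m'$ has $i$-th component $C_m$, and $C_m\to C_0$ by hypothesis (3) because $|C_m-C_0|\le\|S^{(1)}(\gamma_m)-S^{(1)}(\gamma_0)\|_{proj}$.

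\textbf{Step 1: $A_{m,n}$ is small uniformly in $m$.} Let $M$ be the uniform bound from (2). By equicontinuity at $x_0$ from (1), there is $\delta_0$, independent of $m$, such that $|x'_{j,m}(s)-x'_{j,m}(x_0)|<\epsilon/6$ for $|s-x_0|<\delta_0$. Now repeat the splitting from the proof of Theorem \ref{thm:convergence}, using Proposition \ref{prop:key} with $\epsilon_0=1/4$ and the bound $|p_n/n-x_0|<1/n$. This gives
\[
A_{m,n}\le \epsilon/6+2M\cdot 3n^{3/2}e^{-n^{1/2}/18},
\]
and the right-hand side does not depend on $m$. So we can fix one $n$, independent of $m$, with $A_{m,n}<\epsilon/3$ for all $m\ge0$. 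The one point to watch is that the proof of Theorem \ref{thm:convergence} used uniform continuity of a single function. Here it only needs continuity at the single point $x_0$, with $\delta_0$ uniform in $m$, together with the sup bound. That is exactly what hypotheses (1) and (2) supply, and this is the main step.

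\textbf{Step 2: $B_{m,n}\to0$ as $m\to\infty$ with $n$ fixed.} The coefficient $S^{(j;i)}_{k,l}$ is a coordinate of the level-$(n+1)$ tensor. Any single coordinate is bounded by the projective norm, because the coordinate functional has norm $\le 1$ for the Euclidean norms on $\mathbb R^d$. Hence
\[
|S^{(j;i)}_{k,l}(\gamma_m)-S^{(j;i)}_{k,l}(\gamma_0)|\le\|S(\gamma_m)-S(\gamma_0)\|_{proj}\to0.
\]
Also $C_m\to C_0>0$, so $C_m^{-n}\to C_0^{-n}$ with $n$ fixed. Therefore $B_{m,n}<\epsilon/3$ for all large $m$. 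Combining the two steps gives $|x'_{j,m}(x_0)-x'_{j,0}(x_0)|<\epsilon$ for all large $m$. Together with the $i$-th component, this proves $\gamma_m'(x_0)\to\gamma_0'(x_0)$.
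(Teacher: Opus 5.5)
Your proof is correct and follows the paper's argument. You fix a single approximating pair independent of $m$, using equicontinuity at $x_0$, the uniform bound and Proposition \ref{prop:key}, and then let $m\to\infty$, controlling that one coefficient by the projective norm as in Proposition \ref{prop:tensor-ineq}. The differences are minor: the paper applies the kernel estimate directly to $f_m=\gamma_m'-\gamma_0'$ rather than splitting into three terms, and your explicit handling of the factors $C_m^{-n}$ and of the $i$-th component fills in a point the paper skips when it writes $(q_0+1)!\,(S_{p_0,q_0-p_0}(\gamma_m)-S_{p_0,q_0-p_0}(\gamma_0))$ as if every $C_m$ were $1$.
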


The idea of the proof is similar to that of Theorem \ref{thm:convergence}. Before presenting the proof, we will need the following proposition.

\begin{prop}\label{prop:tensor-ineq}
    Let $V=\mathbb R^d$ and $\{e_1,\cdots,e_d\}$ be the standard orthonormal basis on $V$. For each $k,l,n\in \mathbb N$ with $k+l=n$, each $i,j\in \{1,\cdots ,d\}$ with $i\neq j$, and any $a_{n+1}\in V^{\otimes (n+1)}$. Suppose $c^{(j;i)}_{k,l}$ is the coefficient of $a_{n+1}$ with respect to the $(n+1)$-tensor $e_i^{\otimes k}\otimes e_j\otimes e_i^{\otimes l}$, then we have
    \[|c^{(j;i)}_{k,l}|\leq ||a_{n+1}||_{proj},\]
    where $||\cdot||_{proj}$ denotes the projective norm on $V^{\otimes (n+1)}$.
\end{prop}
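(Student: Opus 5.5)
The plan is to show that the coordinate functional $a_{n+1}\mapsto c^{(j;i)}_{k,l}$ has norm at most $1$ with respect to the projective norm. For this it suffices to bound it on elementary tensors and then use the triangle inequality. The functional is
\[
\langle e_i^{\otimes k}\otimes e_j\otimes e_i^{\otimes l}, \cdot\rangle ,
\]
the pairing against the basis tensor under the Euclidean inner product extended to $V^{\otimes(n+1)}$. I will first compute it on a rank-one tensor $u_1\otimes\cdots\otimes u_{n+1}$ with $u_m\in V$. Expanding each $u_m=\sum_r u_m^{r} e_r$ by multilinearity, the coefficient on $e_i^{\otimes k}\otimes e_j\otimes e_i^{\otimes l}$ is the product
\[
u_1^{i}\cdots u_k^{i}\,u_{k+1}^{j}\,u_{k+2}^{i}\cdots u_{n+1}^{i}.
\]
Each factor is a single coordinate of a vector, so its absolute value is at most the Euclidean norm of that vector. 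Hence the coefficient of a rank-one tensor is bounded in absolute value by $|u_1|\cdots|u_{n+1}|$.

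Next, take an arbitrary representation
\[
a_{n+1}=\sum_{r=1}^{K}u^{(r)}_1\otimes\cdots\otimes u^{(r)}_{n+1}.
\]
Coefficient extraction is linear, so $c^{(j;i)}_{k,l}$ is the sum of the coefficients of the summands. By the triangle inequality and the rank-one bound,
\[
|c^{(j;i)}_{k,l}|\le \sum_{r=1}^{K}|u^{(r)}_1|\cdots|u^{(r)}_{n+1}|.
\]
Taking the minimum over all such representations gives exactly $\|a_{n+1}\|_{proj}$, which proves the claim.

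There is no real obstacle here; the only point needing care is that the coefficient extraction is well defined and linear, independent of the chosen decomposition. This holds because the $e_{i_1}\otimes\cdots\otimes e_{i_{n+1}}$ form a basis. The hypothesis $i\neq j$ is not actually used: the same bound holds for any basis coefficient.
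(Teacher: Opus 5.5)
Your proof is correct and follows the same route as the paper: apply the dual functional $(e_i^*)^{\otimes k}\otimes e_j^*\otimes(e_i^*)^{\otimes l}$ to each rank-one term, bound each coordinate factor by the Euclidean norm of its vector, and sum using the triangle inequality. Your version bounds the coefficient for an arbitrary decomposition and then takes the infimum, so it does not need the paper's assumption that the minimum in the projective norm is attained. You are also right that the hypothesis $i\neq j$ plays no role.
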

\begin{proof}
    Without loss of generality we can assume $||a_{n+1}||_{proj}$ is achieved by the representation $a_{n+1}=\sum_{r=1}^k u^{(r)}_{1}\otimes\cdots \otimes u^{(r)}_{n+1}$ where $u^{(r)}_{i}\in V$, then we have
    \[||a_{n+1}||_{proj}=\sum_{r=1}^k|u^{(r)}_{1}|\cdots|u^{(r)}_{n+1}|,\]
    where $|\cdot|$ denotes the Euclidean norm on $V$. Let $V^*$ be the dual vector space of $V$ and let $\varphi\in \left(V^*\right)^{\otimes {(n+1)}}$ be given by
    \[\varphi=(e_i^*)^{\otimes k}\otimes e_j^*\otimes (e_i^*)^{\otimes l},\]
    where $e_m^*$ denotes the dual vector of $e_m$.
    If follows that 
    \begin{align*}
        |c^{(j;i)}_{k,l}|&=|\varphi(a_{n+1})|\\
        &=\Bigg|\varphi( \sum_{r=1}^k u^{(r)}_{1}\otimes\cdots \otimes u^{(r)}_{n+1}) \Bigg| \\
        &=\Bigg|\sum_{r=1}^k \langle u^{(r)}_{1},e_i\rangle\cdots \langle u^{(r)}_{k},e_i\rangle\cdot  \langle u^{(r)}_{k+1},e_j\rangle\cdot \langle u^{(r)}_{k+2},e_i\rangle\cdots \langle u^{(r)}_{n+1},e_i\rangle\Bigg|\\
        &\leq \sum_{r=1}^k|u^{(r)}_{1}|\cdots|u^{(r)}_{n+1}|\\
        &=||a_{n+1}||_{proj}.
    \end{align*}
\end{proof}

Now we present the proof of Theorem \ref{thm:S-inv-pointwise}.
\begin{proof}
As the family $\gamma_m'$ is uniformly bounded, there exists $M>0$ such that
\begin{equation}\label{eq:thm-equi-1}
    |\gamma_m'(s)|\leq M
\end{equation}
holds for all $s\in [0,1]$ and all $m\in \mathbb N$.

Denote $f_m=\gamma_m'-\gamma_0'$ the continuous function on $[0,1]$, since $\gamma_m'$ is equicontinuous at $x_0\in [0,1]$, so is $f_m$. Then for any $\epsilon>0$, there exists $\delta_0>0$ such that 
\begin{equation}\label{eq:thm-equi-0}
    |f_m(s)-f_m(x_0)|<\epsilon
\end{equation}
whenever $|s-x_0|<\delta_0$ and $m\in \mathbb N^*$. 

For the given $x_0\in [0,1]$, we choose a good enough rational approximation $p_0/q_0$ of $x_0$ such that
\begin{enumerate}
    \item $p_0\in \mathbb N$, $q_0\in \mathbb N^*$,
    \item $\big|\frac{p_0}{q_0}-x_0\big|<\frac{\delta_0}2$,
    \item $q_0$ is large enough such that 
    \begin{equation}\label{eq:thm-equi-2}
       q_0^{-1/4}<\frac{\delta_0}2, \textrm{ and }\; 3q_0^{3/2}\exp\left(-\frac{1}{18}q_0^{1/2}\right)<\frac{\epsilon}{4M}.
    \end{equation}
\end{enumerate}
Then we apply Proposition \ref{prop:key} for the parameters $\epsilon_0=1/4$, $n=q_0$, $k=p_0$, $l=q_0-p_0$. Note that we may assume $q_0$ is large enough so that $q_0>n_0$ in order for us to apply Proposition \ref{prop:key} properly. We have
    \begin{equation}\label{eq:thm-equi-3}
        \rho_{p_0,q_0-p_0}(s)\leq 3q_0^{3/2}\exp\left(-\frac{1}{18}q_0^{1/2}\right)<\frac{\epsilon}{4M}
    \end{equation}
whenever $|s-\frac{p_0}{q_0}|\geq q_0^{-1/4}$, where the last inequality uses \eqref{eq:thm-equi-2}. If follows that when $s\in [0,1]$ satisfies $|s-x_0|\geq \delta_0$, we have by the triangle inequality that
 \begin{equation}\label{eq:thm-equi-4}
       \Bigg|s-\frac{p_0}{q_0}\Bigg|\geq \Bigg|s-x_0\Bigg|-\Bigg|x_0-\frac{p_0}{q_0}\Bigg|\geq \delta_0- \frac{\delta_0}{2}\geq q_0^{-1/4},
   \end{equation} 
 where the last inequality uses \eqref{eq:thm-equi-2}, thus inequality \eqref{eq:thm-equi-3} holds.
  
 Now we can give the following estimates.
    \begin{align*}
        &\quad\;\Bigg|\int_{0}^1\rho_{p_0,q_0-p_0}(s)f_m(s)ds-f_m(x_0)\Bigg|\\
        &\leq \int_{0}^1\rho_{p_0,q_0-p_0}(s)\Big|f_m(s)-f_m(x_0)\Big|ds\\
        &=\int_{|s-x|<\delta_0}\rho_{p_0,q_0-p_0}(s)\Big|f_m(s)-f_m(x_0)\Big|ds\\
        &\quad\quad+\int_{|s-x|\geq\delta_0}\rho_{p_0,q_0-p_0}(s)\Big|f_m(s)-f_m(x_0)\Big|ds\\
        &<\epsilon \int_{|s-x|<\delta_0}\rho_{p_0,q_0-p_0}(s)ds+ 4M \int_{|s-x|\geq \delta_0}\rho_{k_n,l_n}(s)ds\\
        &<\epsilon +4M\cdot \frac{\epsilon}{4M}\\
        &=2\epsilon,
    \end{align*}
    where the last third inequality uses \eqref{eq:thm-equi-0} and \eqref{eq:thm-equi-1}, the last second inequality uses \eqref{eq:thm-equi-3} and \eqref{eq:thm-equi-4}. Thus, using Proposition \ref{prop:S_kl}, we obtain
    \begin{equation}\label{eq:thm-equiv-5}
        |(q_0+1)!\left(S_{p_0,q_0-p_0}(\gamma_m)-S_{p_0,q_0-p_0}(\gamma_0)\right)-f_m(x_0)|<2\epsilon.
    \end{equation}
    Moreover, since $||S(\gamma_m)-S(\gamma_0)||\to 0$, there exists $m_0>0$ such that $\forall m>m_0$, we have
    \begin{equation}\label{eq:thm-equiv-6}
        ||S(\gamma_m)-S(\gamma_0)||_{proj}<\frac{\epsilon}{(q_0+1)!}.
    \end{equation}
    Finally, by the triangle inequality, we have
    \begin{align*}
        |f_m(x_0)|&\leq |(q_0+1)!\left(S_{p_0,q_0-p_0}(\gamma_m)-S_{p_0,q_0-p_0}(\gamma_0)\right)-f_m(x_0)|\\
        &\quad\quad+(q_0+1)!|S_{p_0,q_0-p_0}(\gamma_m)-S_{p_0,q_0-p_0}(\gamma_0)|\\
        &<2\epsilon+(q_0+1)!||S_{q_0+1}(\gamma_m)-S_{q_0+1}(\gamma_0)||_{proj}\\
        &<2\epsilon+(q_0+1)!||S(\gamma_m)-S(\gamma_0)||_{proj}\\
        &<3\epsilon.
    \end{align*}
    where the second inequality uses \eqref{eq:thm-equiv-5} and Proposition \ref{prop:tensor-ineq}, and the last inequality uses \eqref{eq:thm-equiv-6}. This shows that $f_m(x_0)\to 0$ as $m\to \infty$, which completes the proof.
\end{proof}

\begin{cor}\label{cor:S-inv-uniform}
    Let $\gamma_m\in \mathcal L^{(i)}(\mathbb R^d)$ be a family of $x_i$-linear $C^1$-curves in $\mathbb R^d$, where $m\in \mathbb N$. Suppose
    \begin{enumerate}
        \item the family $\gamma_m'$ ($m\in \mathbb N$) is uniformly equicontinuous on $[0,1]$,
        \item the family $\gamma_m'$ ($m\in \mathbb N$) is uniformly bounded on $[0,1]$,
        \item $||S(\gamma_m)-S(\gamma_0)||_{proj}\to 0 \textrm{ as }m\to \infty$,
    \end{enumerate}
    then $\gamma_m'$ uniformly converges to $\gamma_0'$ as $m\to \infty$.
\end{cor}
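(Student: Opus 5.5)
The plan is to run the argument of Theorem \ref{thm:S-inv-pointwise} with constants that do not depend on the base point $x_0$. Uniform equicontinuity of $\gamma_m'$ will supply a single $\delta_0$ that works at every point of $[0,1]$. Everything that depends on $x_0$ is then absorbed by choosing, for each $x$, a rational approximation $p(x)/q(x)$ whose denominator is one fixed integer $q_0$. The obvious choice is $p(x)=\floor{q_0x}$, $q(x)=q_0$, as in Example \ref{ex:natural-r-approx}.

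First I reduce to the second coordinate picture. Condition (3) forces $S^{(1)}_i(\gamma_m)\to S^{(1)}_i(\gamma_0)=C_0$, so the constants $C_m$ in $x_i(t)=C_mt$ converge. The $i$-th components of $\gamma_m'$ are therefore the constants $C_m\to C_0$, and these converge uniformly. Now fix $j\neq i$ and put $f_m=(x^{(m)}_j)'-(x^{(0)}_j)'$. Given $\epsilon>0$, uniform equicontinuity gives a $\delta_0$ with $|f_m(s)-f_m(x)|<\epsilon$ whenever $|s-x|<\delta_0$, for all $m$ and all $x$. Let $M$ be a uniform bound for $|\gamma_m'|$. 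I then pick $q_0>n_0$, with $n_0$ from Proposition \ref{prop:key} at $\epsilon_0=1/4$, large enough that $1/q_0<\delta_0/2$, $q_0^{-1/4}<\delta_0/2$ and $3q_0^{3/2}e^{-q_0^{1/2}/18}<\epsilon/(4M)$. Exactly as in the proof of Theorem \ref{thm:S-inv-pointwise}, the triangle inequality together with Proposition \ref{prop:key} shows that for every $x\in[0,1]$ and every $m$,
\[
\Big|\int_0^1\rho_{p(x),q_0-p(x)}(s)f_m(s)\,ds-f_m(x)\Big|<2\epsilon .
\]
This bound is uniform in $x$ and $m$.

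The main obstacle is converting the integral into signature coefficients. Proposition \ref{prop:S_kl} gives
\[
\int_0^1\rho_{k,l}\,(x_j^{(m)})'\,ds=\frac{(q_0+1)!}{C_m^{q_0}}S^{(j;i)}_{k,l}(\gamma_m),
\]
and $C_m$ depends on $m$. In Theorem \ref{thm:S-inv-pointwise} this point was glossed over, by implicitly taking the same $C_0$ for all $m$. Here I handle it explicitly. For fixed $q_0$ we have $C_m^{q_0}\to C_0^{q_0}>0$, and $|S^{(j;i)}_{k,l}(\gamma_m)|\le M^{q_0}/(q_0+1)!$ uniformly in $m$. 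Together with Proposition \ref{prop:tensor-ineq}, this gives for $m$ large
\[
\Big|\frac{(q_0+1)!}{C_m^{q_0}}S^{(j;i)}_{k,l}(\gamma_m)-\frac{(q_0+1)!}{C_0^{q_0}}S^{(j;i)}_{k,l}(\gamma_0)\Big|\le c(q_0)\Big(\|S(\gamma_m)-S(\gamma_0)\|_{proj}+|C_m^{-q_0}-C_0^{-q_0}|\Big)<\epsilon ,
\]
uniformly over the finitely many pairs $(k,l)=(p,q_0-p)$ with $p=0,\dots,q_0$. The uniformity in $x$ is automatic here, because the fixed denominator $q_0$ leaves only finitely many coefficients to control.

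Combining the two displays gives $\sup_x|f_m(x)|<3\epsilon$ for all $m$ beyond some $m_0$ that is independent of $x$. Doing this for each $j\neq i$, together with $C_m\to C_0$, yields $\|\gamma_m'-\gamma_0'\|_\infty\to0$.
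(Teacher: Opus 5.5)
Your argument is correct, but it takes a different route from the paper. The paper proves this corollary in two lines. Theorem~\ref{thm:S-inv-pointwise} gives $\gamma_m'(x)\to\gamma_0'(x)$ for every $x$. Then a uniformly bounded, uniformly equicontinuous sequence on the compact interval $[0,1]$ that converges pointwise must converge uniformly, by Arzel\`a--Ascoli: every subsequence has a uniformly convergent sub-subsequence, whose limit can only be $\gamma_0'$.

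You instead rerun the kernel estimate with every constant independent of the base point. Uniform equicontinuity gives a single $\delta_0$. Fixing the denominator $q_0$ with $p(x)=\floor{q_0x}$ makes the tail bound from Proposition~\ref{prop:key} uniform in $x$. It also means only the level-$(q_0+1)$ coefficients $S^{(j;i)}_{p,q_0-p}$, $0\le p\le q_0$, enter the signature side, and each is controlled by $\|S(\gamma_m)-S(\gamma_0)\|_{proj}$ via Proposition~\ref{prop:tensor-ineq}.

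The paper's route is shorter once the pointwise theorem is available. Yours is self-contained and avoids compactness. It also produces an explicit threshold $m_0(\epsilon)$ uniform in $x$, in the spirit of Theorem~\ref{thm:mod-cont-S-inverse}. As you note, it also tracks the $m$-dependent normalisation $C_m^{q_0}$ and treats the $i$-th component separately. The proof of Theorem~\ref{thm:S-inv-pointwise}, and hence the paper's proof of this corollary, suppresses both points: its estimate compares the scalar $(q_0+1)!\,(S_{p_0,q_0-p_0}(\gamma_m)-S_{p_0,q_0-p_0}(\gamma_0))$ directly with the vector $f_m(x_0)$.

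Two harmless slips. The uniform coefficient bound should read $|S^{(j;i)}_{k,l}(\gamma_m)|\le M^{q_0+1}/(q_0+1)!$, since $|C_m|\le M$ contributes $q_0$ factors and $|(x^{(m)}_j)'|\le M$ contributes one more. Also, only $C_0\neq 0$ is needed, not $C_0^{q_0}>0$, unless you assume the curves are $x_i$-increasing as the paper's remark permits.
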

\begin{proof}
    By Theorem \ref{thm:S-inv-pointwise}, we see that $\gamma_m$ pointwise converges to $\gamma_0$. Since $\gamma_m$ is uniformly equicontinuous and uniformly bounded, the Arzela–Ascoli theorem assures that $\gamma_m$  uniformly converges to $\gamma_0$.
\end{proof}
\begin{remark}\label{remark:homeo}
    Corollary \ref{cor:S-inv-uniform} also directly follows from the Arzela–Ascoli theorem: By the Arzela–Ascoli theorem, any set of uniformly bounded and uniformly equicontinuous functions forms a relatively compact set, which we denote by $A\subset \mathcal L^{(i)}(\mathbb R^d)$. Let $\overline A$ be the closure of $A$ in $\mathcal L^{(i)}(\mathbb R^d)$ with respect to the $d_{C^1}$-metric.
    Then the signature map $S:\overline A\to G^{(i)}\subset T((V))$ is an injective continuous map from a compact topological space to a Hausdorff space, which then must be a homeomorphism onto its image. In particular, its inverse $S^{-1}$ when restricted to the image $S(\overline A)$ is continuous. On the other hand, this argument cannot give the modulus of continuity of $S^{-1}$ on $S(\overline A)$, which can be obtained by using our signature inversion $\cI$, see the next section.
\end{remark}

\subsection{The asymptotic norms}
In the last subsection we have shown that the topology $\tau_{proj}$ on $T((\R^d))$ induced by the projective norm $\|\cdot\|_{proj}$ is in general not enough for providing continuity of the signature inverse $S^{-1}$, unless $S^{-1}$ is restricted to the image of a equicontinuous family of curves. Therefore, we need to find another suitable norm $\|\cdot\|$ on $T((\R^d))$ such that $S: (\cL^{(i)}(\R^d), d_{C^1}) \to (G^{(i)}(\R^d),\|\cdot\|)$ or $S: (\cL^{(i)}(\R^d), d_{BV}) \to (G^{(i)}(\R^d),\|\cdot\|)$ is a homeomorphism. This will be the main content of this subsection.

We first note that for any $d \in \N^*$ with $d \ge 2$, the space $\mathcal L^{(i)}(\mathbb R^d)$ naturally splits as a product
\[\Phi:\mathcal L^{(i)}(\mathbb R^d)\cong \mathcal P^1(\mathbb R^{d-1})\times (\mathbb R \setminus \{0\}),\]
where the correspondence is explicitly given by
\[\gamma=(x_1(\cdot),\cdots,x_d(\cdot)) \mapsto \left((x_1(\cdot),\cdots,\widehat{x_i(\cdot)},\cdots, x_d(\cdot)), x_i(1)\right),\]
where $\widehat{x_i(\cdot)}$ means deleting the $i$-th coordinate function $x_i(\cdot)$. Based on this observation, from now on we will express any curve $\gamma \in \cP^1(\R^{d-1})$ by the suggestive notation $\gamma = (x_1(\cdot),\cdots,\widehat{x_i(\cdot)},\cdots, x_d(\cdot))$ when we want to embed $\gamma$ into $\cP^1(\R^d)$ by adding an additional time component into the $i$-th position, namely we define the following inclusion map
$\iota^{(i)}:\mathcal P^1(\mathbb R^{d-1})\to \mathcal L^{(i)}(\mathbb R^d)$
given by
\[\iota^{(i)}\left((x_1(\cdot), \cdots,\widehat{x_i(\cdot)},\cdots, x_d(\cdot))\right)=\left(x_1(\cdot), \cdots,x_i(\cdot),\cdots, x_d(\cdot)\right), x_i(s)=s.\]
Obviously it implies that the factor $\mathcal P^1(\mathbb R^{d-1})$ can be identified as an affine subspace of $\mathcal L^{(i)}(\mathbb R^d)$ by the above inclusion. 
We denote the image of $\iota^{(i)}$ by
\[\mathcal L^{(i)}_0(\mathbb R^d)=\{\gamma=(x_1(\cdot),\cdots,x_d(\cdot))\in \mathcal L^{(i)}(\mathbb R^d)\;|\;x_i(s)=s\}.\]
As we have mentioned in the introduction, usually people choose $i=1$ and use $\iota^{(1)}$ to get the time-augmented curve.

For each $k,l\in \mathbb N$ and $i\in\{1,\cdots,d\}$, we denote $L_{k,l}^{(i)}:T((\mathbb R^d))\to \mathbb R^{d-1}$ the linear operator given by
\[L_{k,l}^{(i)}(a)=\left(S_{k,l}^{(1;i)}(a),\cdots, \widehat{S_{k,l}^{(i;i)}(a)},\cdots,S_{k,l}^{(d;i)}(a)\right),\]
where $S_{k,l}^{(j;i)}(a)$ is the suggestive notation (as in Theorem \ref{thm:main}) that represents the component of $a$ on the $(k+l+1)$-tensor basis  $e_i^{\otimes k}\otimes e_j \otimes e_i^{\otimes l}$, i.e., $S_{k,l}^{(j;i)}(a) = \langle e_i^{\otimes k}\otimes e_j \otimes e_i^{\otimes l}, a \rangle$. One good property of this operator is that it stays linear when composing with the signature map.

\begin{prop}\label{prop:linear}
   Let $d \ge 2$. For each $k,l\in \mathbb N$ and $i\in \{1,\cdots, d\}$, the composite map 
    $$L_{k,l}^{(i)}\circ S \circ \iota^{(i)}: \mathcal P^1(\mathbb R^{d-1})\to \mathbb R^{d-1}$$
    is a linear operator.
\end{prop}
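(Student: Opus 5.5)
The plan is to compute the composite explicitly, using Proposition \ref{prop:S_kl} in its $d$-dimensional form, and to read off linearity from the resulting formula. Fix $k,l\in\N$ and $i$. Take $\gamma=(x_1,\cdots,\widehat{x_i},\cdots,x_d)\in\cP^1(\R^{d-1})$ and set $\hat\gamma=\iota^{(i)}(\gamma)$, so that $\hat\gamma$ has $i$-th coordinate $x_i(s)=s$.

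\textbf{Step 1: an integral formula for each component.} For $j\neq i$, the coefficient $S^{(j;i)}_{k,l}(S(\hat\gamma))$ is the iterated integral
\[
\int_{0<t_1<\cdots<t_k<s<t_{k+1}<\cdots<t_{k+l}<1} x_j'(s)\,dt_1\cdots dt_{k+l}\,ds ,
\]
since every letter $e_i$ contributes $x_i'\equiv 1$. Exactly as in the proof of Proposition \ref{prop:S_kl} (that is, projecting onto the $(x_i,x_j)$-plane, with constant $C_0=1$), Fubini's theorem turns this into
\[
S^{(j;i)}_{k,l}(S(\hat\gamma))=\int_0^1 m_{k,l}(s)\,x_j'(s)\,ds=(x_j',m_{k,l})_{L^2}.
\]

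\textbf{Step 2: assemble and check linearity.} Putting the components together gives
\[
L^{(i)}_{k,l}\circ S\circ\iota^{(i)}(\gamma)=\int_0^1 m_{k,l}(s)\,\gamma'(s)\,ds\in\R^{d-1}.
\]
The map $\gamma\mapsto\gamma'$ is linear on $\cP^1(\R^{d-1})$. This is well-defined modulo translations, and the vector space structure there is the pointwise one, with the representative chosen by $\gamma(0)=0$. Integration against the fixed weight $m_{k,l}$ is also linear, so the composite is linear.

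The only point requiring care is that $\iota^{(i)}$ itself is affine rather than linear, because of the fixed coordinate $x_i(s)=s$. I would stress that this causes no problem: the chosen tensor words contain exactly one letter $e_j$ with $j\neq i$, so each coefficient is linear in the non-$i$ coordinates, and the fixed $i$-th coordinate enters only through the constant weight. No real obstacle is expected.
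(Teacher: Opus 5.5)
Your proposal is correct and follows essentially the paper's argument: the paper also writes each component $S^{(j;i)}_{k,l}(\iota^{(i)}(\gamma))$ as an iterated integral where every $e_i$-letter contributes $x_i'\equiv 1$, so only $x_j'$ appears, and then reads off linearity from the linearity of integration in $x_j'$. Your additional use of Fubini to collapse this to $\int_0^1 m_{k,l}(s)\gamma'(s)\,ds$ (Proposition \ref{prop:S_kl} with $C_0=1$) states the same fact more explicitly.
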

\begin{proof}
    Using the above notation, for any $\gamma = (x_1(\cdot),\cdots,\widehat{x_i(\cdot)},\cdots, x_d(\cdot)) \in \cP^1(\R^{d-1})$, we see that
    \[(L_{k,l}^{(i)}\circ S \circ \iota^{(i)})(\gamma)=\left(S_{k,l}^{(1;i)}(\iota^{(i)}(\gamma)),\cdots, \widehat{S_{k,l}^{(i;i)}(\iota^{(i)}(\gamma))},\cdots,S_{k,l}^{(d;i)}(\iota^{(i)}(\gamma))\right).\]
    Then from the definition, for each $j \neq i$, it holds that
    $$
    S_{k,l}^{(j;i)}(\iota^{(i)}(\gamma)) = \int_{0<t_1<\ldots<t_k<s<t_{k+1}<\ldots<t_{k+l+1}<1} dt_1\cdots dt_k x_j^\prime(s) dt_{k+1} \cdots dt_{k+l+1}.
    $$
    This indeed implies the linearity of the mapping $L_{k,l}^{(i)}\circ S \circ \iota^{(i)}$, for instance, for any real number $\lambda \in \R$, one has
    \begin{align*}
        S_{k,l}^{(j;i)}(\iota^{(i)}(\lambda \gamma)) &= \int dt_1\cdots dt_k (\lambda x_j^\prime)(s) dt_{k+1} \cdots dt_{k+l+1} \\
        &=\lambda \int dt_1\cdots dt_k x_j^\prime(s) dt_{k+1} \cdots dt_{k+l+1}\\
        &=\lambda S_{k,l}^{(j;i)}(\iota^{(i)}(\gamma)).
    \end{align*}
The linearity in the addition can be proved similarly.
\end{proof}

By pulling back the Euclidean norm on $\mathbb R^{d-1}$, we define the following family of semi-norms $||\cdot||_{k,l}^{(i)}$ on $T((\mathbb R^d))$ as
\[||a||_{k,l}^{(i)}:=|L_{k,l}^{(i)}(a)|,\;\forall a\in T((\mathbb R^d)).\]
By further renormalizing and taking certain limits of this family, we consider the following two asymptotic norms.

\begin{definition}\label{def: asymptotic norms}
    For each $i\in \{1,\cdots, d\}$ and $a\in T((\mathbb R^d))$, we define the (index-$i$) asymptotic supremum semi-norm as
    \[||a||_{AS}^{(i)}:=\varlimsup_{n\to +\infty}\left((n+1)!\cdot \sup_{\substack{k+l=n\\k,l\in \mathbb N}}||a||_{k,l}^{(i)}\right)\]
    and its asymptotic $L^1$ semi-norm as
    \[||a||_{AL^1}^{(i)}:=\varlimsup_{n\to +\infty}\left((n!)\cdot \sum_{k=1}^n||a||_{k,n-k}^{(i)}\right).\]   
\end{definition}

We now show that the above asymptotic semi-norms play an essential role for establishing the continuity of $S^{-1}$ for time-augmented curves. 
\begin{thm}\label{thm:norm} Let $d \ge 2$. The functions $||\cdot||_{AS}^{(i)}$ and $||\cdot||_{AL^1}^{(i)}$ satisfy the following properties:
    \begin{enumerate}
        \item They are semi-norms on $T((\mathbb R^d))$ which take values in $[0,+\infty]$.
        \item For any $\gamma_1,\gamma_2\in \cL^{(i)}_0(\mathbb R^d)$, and any $\lambda_1,\lambda_2\in \mathbb R$, we have
        \[||S(\lambda_1\gamma_1+\lambda_2\gamma_2)||_{AS}^{(i)}=||\lambda_1S(\gamma_1)+\lambda_2 S(\gamma_2)||_{AS}^{(i)},\textrm{ and }\]
        \[||S(\lambda_1\gamma_1+\lambda_2\gamma_2)||_{AL^1}^{(i)}=||\lambda_1S(\gamma_1)+\lambda_2 S(\gamma_2)||_{AL^1}^{(i)}.\]
        \item The semi-norms $||\cdot||_{AS}^{(i)}$ and $||\cdot||_{AL^1}^{(i)}$ induce metrics $d_{AS}$ and $d_{AL^1}$ on the image set $S \circ \iota^{(i)} (\cP^1(\R^{d-1}))$,  and
        the composite map $$S\circ \iota^{(i)}:(\mathcal P^1(\mathbb R^{d-1}),d_{C^1}) \to (T((\mathbb R^d)),d_{AS})$$
        and 
        $$S\circ \iota^{(i)}:(\mathcal P^1(\mathbb R^{d-1}),d_{BV}) \to (T((\mathbb R^d)),d_{AL^1})$$
        are isometries. 
        \item The semi-norms $||\cdot||_{AS}^{(i)}$ and $||\cdot||_{AL^1}^{(i)}$ induce metrics $d_{AS}$ and $d_{AL^1}$ on the image set $S (\cL^{(i)}_0(\R^{d}))$,  and
        the composite map $$S:(\cL^{(i)}_0(\R^{d}),d_{C^1}) \to (T((\mathbb R^d)),d_{AS})$$
        and 
        $$S:(\cL^{(i)}_0(\R^{d}),d_{BV}) \to (T((\mathbb R^d)),d_{AL^1})$$
        are isometries. 
    \end{enumerate}
\end{thm}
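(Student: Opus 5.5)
We will prove the four items in order, working with the linear functionals $L^{(i)}_{k,l}$ and the formula of Proposition \ref{prop:S_kl}.

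\emph{Item (1).} Each $\|\cdot\|^{(i)}_{k,l}$ is a seminorm, being the Euclidean norm composed with a linear map. Multiplying by $(n+1)!$ or $n!$ and then taking a supremum or a finite sum over $k+l=n$ preserves subadditivity and absolute homogeneity. Taking $\varlimsup_n$ also preserves them in $[0,+\infty]$, since $\varlimsup(a_n+b_n)\le \varlimsup a_n+\varlimsup b_n$ and $\varlimsup |\lambda| a_n=|\lambda|\varlimsup a_n$ (with the convention $0\cdot\infty=0$ for $\lambda=0$).

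\emph{Item (2).} Scalar multiplication and addition of curves in $\cL^{(i)}_0(\R^d)$ are understood through the identification $\iota^{(i)}$ with $\cP^1(\R^{d-1})$, i.e.\ on the non-$i$ coordinates, keeping $x_i(s)=s$. By Proposition \ref{prop:linear}, $L^{(i)}_{k,l}(S(\lambda_1\gamma_1+\lambda_2\gamma_2))=\lambda_1L^{(i)}_{k,l}(S(\gamma_1))+\lambda_2L^{(i)}_{k,l}(S(\gamma_2))=L^{(i)}_{k,l}(\lambda_1S(\gamma_1)+\lambda_2S(\gamma_2))$ for every $(k,l)$. Both asymptotic seminorms depend on $a$ only through the family $(L^{(i)}_{k,l}(a))_{k,l}$, so the two sides agree.

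\emph{Items (3) and (4).} Item (4) is item (3) transported by the bijection $\iota^{(i)}$, which is an isometry for $d_{C^1}$ and $d_{BV}$ on the non-$i$ coordinates because the $i$-th coordinate is identical. For (3), by item (2) it suffices to show, for $\gamma=\gamma_1-\gamma_2$ (with $\gamma_1,\gamma_2$ embedded via $\iota^{(i)}$), that
\[
\|S(\iota^{(i)}\gamma_1)-S(\iota^{(i)}\gamma_2)\|_{AS}^{(i)}=\|\gamma'\|_\infty,\qquad
\|S(\iota^{(i)}\gamma_1)-S(\iota^{(i)}\gamma_2)\|_{AL^1}^{(i)}=\|\gamma'\|_{L^1}.
\]
The left sides equal the corresponding seminorms of $S(\iota^{(i)}\gamma)$ by item (2). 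Positivity of the induced metrics then follows from the identities themselves, or from Theorem \ref{thm: signature is injective on linear paths}.

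By Proposition \ref{prop:S_kl} with $C_0=1$, applied coordinatewise,
\[
(n+1)!\,L^{(i)}_{k,n-k}(S(\iota^{(i)}\gamma))=\int_0^1\rho_{k,n-k}(s)\gamma'(s)\,ds=:g_n(k).
\]

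\emph{Upper bound for AS.} Since $\rho_{k,n-k}$ is a probability density, $|g_n(k)|\le\|\gamma'\|_\infty$, so $\|\cdot\|_{AS}\le\|\gamma'\|_\infty$.

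\emph{Lower bound for AS.} Pick $x^*$ with $|\gamma'(x^*)|=\|\gamma'\|_\infty$ and take $k=\floor{nx^*}$. This is a rational approximation of $x^*$, so Theorem \ref{thm:convergence}(1), applied componentwise, gives $g_n(k)\to\gamma'(x^*)$. Hence the $\varlimsup$ of the supremum is at least $\|\gamma'\|_\infty$, and the AS identity holds.

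\emph{The $AL^1$ identity.} Write $n!\,|L^{(i)}_{k,n-k}|=\frac{1}{n+1}|g_n(k)|$. Theorem \ref{thm:convergence}(2), with the uniform rational approximation $(\floor{ns},n)$ of Example \ref{ex:natural-r-approx}, gives $\sup_{1\le k\le n}|g_n(k)-\gamma'(k/n)|\to0$. Therefore
\[
\sum_{k=1}^n\frac{1}{n+1}|g_n(k)|=\frac{n}{n+1}\cdot\frac1n\sum_{k=1}^n|\gamma'(k/n)|+o(1)\;\longrightarrow\;\int_0^1|\gamma'(s)|\,ds
\]
by Riemann sums of the continuous function $|\gamma'|$, exactly as in the proof of Corollary \ref{cor:length}. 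The limit exists, so the $\varlimsup$ equals it.

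The main obstacle is the bookkeeping in (2) and (3): the difference $S(\gamma_1)-S(\gamma_2)$ is not a signature, so one must check that the asymptotic seminorms only see the functionals $L^{(i)}_{k,l}$, which are linear along $\iota^{(i)}$. Once that is settled, the identities reduce to Theorem \ref{thm:convergence}. A secondary point is the normalization: $(n+1)!$ for AS versus $n!$ for $AL^1$, where the extra factor $\frac{n}{n+1}\to1$ is harmless.
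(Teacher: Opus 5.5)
Your proof is correct and follows the paper's overall structure. You use pulled-back seminorms and subadditivity of $\limsup$ for (1), and Proposition \ref{prop:linear} for (2). You then use (2) to reduce (3) to computing the two seminorms of a single $S\circ\iota^{(i)}(\widehat\gamma)$. The lower bound for $\|\cdot\|^{(i)}_{AS}$ comes from $k=\floor{ns_0}$ at a maximizer, the $AL^1$ identity from the uniform approximation $(\floor{ns},n)$ plus Riemann sums, and (4) from transporting (3) along $\iota^{(i)}$.

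The one genuinely different step is the upper bound $\|S\circ\iota^{(i)}(\widehat\gamma)\|^{(i)}_{AS}\le\|\widehat\gamma'\|_\infty$.
\begin{itemize}
\item You note that $\rho_{k,n-k}\ge 0$ integrates to $1$, so $|g_n(k)|\le\|\widehat\gamma'\|_\infty$ for every $n$ and $k$.
\item The paper instead picks a subsequence $q_n$ realizing the $\limsup$ together with maximizing indices $k_n$. It extracts $k_n/q_n\to t_0$ by compactness and invokes Theorem \ref{thm:main} to identify the limit as $|\widehat\gamma'(t_0)|$.
\end{itemize}
Your bound is more elementary, non-asymptotic, and independent of the convergence theorem; it also shows directly that the $\limsup$ is finite. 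The paper's route only has the merit of reusing the pointwise inversion machinery it has already built.

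In the $AL^1$ part you also improve on the paper in two small ways. You make explicit the factor $\frac{n}{n+1}$ relating the $n!$ normalization of $\|\cdot\|^{(i)}_{AL^1}$ to the $(n+1)!$ normalization in $g_n$, which the paper absorbs silently. You also obtain a genuine limit along the full sequence, rather than along a subsequence attaining the $\limsup$.

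One minor correction concerns positivity of the induced metrics. The alternative you cite, Theorem \ref{thm: signature is injective on linear paths}, would not by itself give $\|a-b\|^{(i)}_{AS}>0$ for $a\neq b$. The identity $\|a-b\|^{(i)}_{AS}=\|\gamma_1'-\gamma_2'\|_\infty$ (and its $AL^1$ analogue) is what does the work, so that justification alone suffices.
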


\begin{proof}
    For (1), we first observe that for each $k,l\in \mathbb N$, the function $||\cdot||_{k,l}^{(i)}$ is defined as a pull-back norm by the linear operator $L_{k,l}^{(i)}$, hence it must be a semi-norm. Then we have for any $a,b\in T((\mathbb R^d))$ that
     \begin{align*}
       ||a+b||_{AS}^{(i)}&= \varlimsup_{n\to +\infty}\left((n+1)!\cdot \sup_{\substack{k+l=n\\k,l\in \mathbb N}}||a+b||_{k,l}^{(i)}\right)\\ 
       &\leq \varlimsup_{n\to +\infty}\left((n+1)!\cdot \sup_{\substack{k+l=n\\k,l\in \mathbb N}}\left(||a||_{k,l}^{(i)}+||b||_{k,l}^{(i)}\right)\right)\\
       &\leq \varlimsup_{n\to +\infty}\left((n+1)!\cdot \sup_{\substack{k+l=n\\k,l\in \mathbb N}}||a||_{k,l}^{(i)}\right)+\varlimsup_{n\to +\infty}\left((n+1)!\cdot \sup_{\substack{k+l=n\\k,l\in \mathbb N}}||b||_{k,l}^{(i)}\right)\\
       &=||a||_{AS}^{(i)}+||b||_{AS}^{(i)},
    \end{align*}
    and similarly,
\begin{align*}
       ||a+b||_{AL^1}^{(i)}&= \varlimsup_{n\to +\infty}\left(n!\cdot \sum_{k=1}^n||a+b||_{k,n-k}^{(i)}\right)\\ 
       &\leq \varlimsup_{n\to +\infty}\left(n!\cdot \sum_{k=1}^n\left(||a||_{k,n-k}^{(i)}+||b||_{k,n-k}^{(i)}\right)\right)\\
       &\leq \varlimsup_{n\to +\infty}\left(n!\cdot \sum_{k=1}^n||a||_{k,n-k}^{(i)}\right)+\varlimsup_{n\to +\infty}\left(n!\cdot \sum_{k=1}^n||b||_{k,n-k}^{(i)}\right)\\
       &=||a||_{AL^1}^{(i)}+||b||_{AL^1}^{(i)}.
    \end{align*}
For any $a\in T((\mathbb R^d))$ and $\lambda\in \mathbb R$, we have that
\begin{align*}
    ||\lambda a||_{AS}^{(i)}&=\varlimsup_{n\to +\infty}\left((n+1)!\cdot \sup_{\substack{k+l=n\\k,l\in \mathbb N}}||\lambda a||_{k,l}^{(i)}\right)\\
    &=|\lambda|\cdot \varlimsup_{n\to +\infty}\left((n+1)!\cdot \sup_{\substack{k+l=n\\k,l\in \mathbb N}}||a||_{k,l}^{(i)}\right)\\
    &=|\lambda|\cdot ||a||_{AS}^{(i)},
\end{align*}
and similarly,
\begin{align*}
    ||\lambda a||_{AL^1}^{(i)}&=\varlimsup_{n\to +\infty}\left(n!\cdot \sum_{k=1}^n||\lambda a||_{k,n-k}^{(i)}\right)\\
    &=|\lambda|\cdot \varlimsup_{n\to +\infty}\left(n!\cdot \sum_{k=1}^n||a||_{k,n-k}^{(i)}\right)\\
    &=|\lambda|\cdot ||a||_{AL^1}^{(i)}.
\end{align*}
Thus, both $||\cdot||_{AS}^{(i)}$ and $||\cdot||_{AL^1}^{(i)}$ are semi-norms.

    For (2), it follows from Proposition \ref{prop:linear} that for each $k,l\in \mathbb N$, $\gamma_1,\gamma_2 \in \cL^{(i)}_0(\R^d) = \iota^{(i)}(\cP^1(\R^{d-1}))$ and any real numbers $\lambda_1,\lambda_2 \in \R$, we have
    \begin{align*}
    ||S(\lambda_1\gamma_1+\lambda_2\gamma_2)||_{k,l}^{(i)}&=|(L_{k,l}^{(i)}\circ S)(\lambda_1\gamma_1+\lambda_2\gamma_2)|\\
    &=|\lambda_1(L_{k,l}^{(i)}\circ S)(\gamma_1)+\lambda_2(L_{k,l}^{(i)}\circ S)(\gamma_2)|\\ &=|L_{k,l}^{(i)}\left(\lambda_1S(\gamma_1)+\lambda_2S(\gamma_2)\right)|\\
    &=||\lambda_1S(\gamma_1)+\lambda_2S(\gamma_2)||_{k,l}^{(i)}.
    \end{align*}
    This implies both
    \[||S(\lambda_1\gamma_1+\lambda_2\gamma_2)||_{AS}^{(i)}=||\lambda_1S(\gamma_1)+\lambda_2 S(\gamma_2)||_{AS}^{(i)},\;\textrm{ and }\]
   \[||S(\lambda_1\gamma_1+\lambda_2\gamma_2)||_{AL^1}^{(i)}=||\lambda_1S(\gamma_1)+\lambda_2 S(\gamma_2)||_{AL^1}^{(i)}.\]

    For (3), we choose an arbitrary $\widehat\gamma=(x_1(\cdot),\cdots,\widehat{x_i(\cdot)},\cdots, x_d(\cdot))$ in $\mathcal P^1(\mathbb R^{d-1})$, and denote $\gamma=\iota^{(i)}(\widehat \gamma)=(x_1(\cdot),\cdots,x_i(\cdot),\cdots,x_d(\cdot))$ where $x_i(s)=s$. Then we have for any $k,l\in \mathbb N$ that
    \begin{align}\label{eq:thm-norm-1}
    \begin{split}
         ||(S\circ\iota^{(i)})(\widehat{\gamma})||_{k,l}^{(i)}&=|(L_{k,l}^{(i)}\circ S\circ\iota^{(i)})(\widehat{\gamma})|\\
        &=\Bigg|\left(S_{k,l}^{(1;i)}(\gamma),\cdots, \widehat{S_{k,l}^{(i;i)}(\gamma)},\cdots,S_{k,l}^{(d;i)}(\gamma)\right)\Bigg|.
    \end{split}    
    \end{align}
    We first show that $||(S\circ\iota^{(i)})(\widehat{\gamma})||_{AS}^{(i)}=||\widehat \gamma'||_\infty$.

    For the ``$\leq$'' part, we may assume that the asymptotic norm is achieved by a subsequence $q_n\to +\infty$ such that
    \begin{equation}\label{eq:thm-norm-2}
        ||(S\circ\iota^{(i)})(\widehat{\gamma})||_{AS}^{(i)}=\lim_{n\to +\infty}\left((q_n+1)!\cdot ||(S\circ\iota^{(i)})(\widehat{\gamma})||_{k_n,l_n}^{(i)}\right)
    \end{equation}
    where $||(S\circ\iota^{(i)})(\widehat{\gamma})||_{k_n,l_n}^{(i)}$ achieves for each $q_n$ the following supremum
    $$\sup_{\substack{k+l=q_n\\k,l\in \mathbb N}}||(S\circ\iota^{(i)})(\widehat{\gamma})||_{k,l}^{(i)}.$$
    Since $\frac{k_n}{q_n}\in [0,1]$, by possibly passing onto another subsequence, which by abuse of notation we still assume to be the triple $(k_n,l_n,q_n)$, we have $\frac{k_n}{q_n}\to t_0$ for some $t_0\in [0,1]$. By Theorem \ref{thm:main}, we have that
    \[(q_n+1)!\cdot S_{k_n,l_n}^{(j;i)}(\gamma)\to x_j'(t_0) \textrm{ as } n\to +\infty\]
    holds for all $j\neq i$. Thus, using \eqref{eq:thm-norm-1} and \eqref{eq:thm-norm-2}, we have
    \begin{align*}
        ||(S\circ\iota^{(i)})(\widehat{\gamma})||_{AS}^{(i)}&=\lim_{n\to \infty}\left((q_n+1)!\cdot ||(S\circ\iota^{(i)})(\widehat{\gamma})||_{k_n,l_n}^{(i)}\right)\\
        &=\lim_{n\to \infty}\left((q_n+1)!\cdot\Bigg|\left(S_{k_n,l_n}^{(1;i)}(\gamma),\cdots, \widehat{S_{k_n,l_n}^{(i;i)}(\gamma)},\cdots,S_{k_n,l_n}^{(d;i)}(\gamma)\right)\Bigg|\right)\\
&=\Big|\left(x_1'(t_0),\cdots,\widehat{x_i'(t_0)},\cdots,x'_d(t_0)\right)\Big|\\
&\leq ||\widehat \gamma'||_\infty.
    \end{align*}
    
    For the ``$\geq$'' part, we assume that $||\widehat \gamma'||_\infty$ is achieved at $s=s_0\in [0,1]$ so that
    \[||\widehat \gamma'||_\infty=\Big|\left(x'_1(s_0),\cdots,\widehat{x'_i(s_0)},\cdots, x'_d(s_0)\right)\Big|.\]
    Then we take $k_n=\floor{ns_0}$ and $l_n=n-k_n$ so that $(k_n,n)$ is a rational approximation of $s_0$. Apply Theorem \ref{thm:main}, we have that for all $j\neq i$,
    \[x_j'(s_0)=\lim_{n\to +\infty}\left((n+1)!\cdot S_{k_n,l_n}^{(j;i)}(\gamma)\right).\]
    It follows that
    \begin{align*}
        ||\widehat \gamma'||_\infty&=\Big|\left(x'_1(s_0),\cdots,\widehat{x'_i(s_0)},\cdots, x'_d(s_0)\right)\Big|\\
        &=\lim_{n\to \infty}\left((n+1)!\cdot\Bigg|\left(S_{k_n,l_n}^{(1;i)}(\gamma),\cdots, \widehat{S_{k_n,l_n}^{(i;i)}(\gamma)},\cdots,S_{k_n,l_n}^{(d;i)}(\gamma)\right)\Bigg|\right)\\
        &=\lim_{n\to \infty}\left((n+1)!\cdot||(S\circ\iota^{(i)})(\widehat{\gamma})||_{k_n,l_n}^{(i)}\right)\\
        &\leq \lim_{n\to \infty}\left((n+1)!\cdot\sup_{\substack{k,l\in \mathbb N\\ k+l=n}}||(S\circ\iota^{(i)})(\widehat{\gamma})||_{k,l}^{(i)}\right)\\
        &\leq ||(S\circ\iota^{(i)})(\widehat{\gamma})||_{AS}^{(i)},
    \end{align*}
    where the third equality uses \eqref{eq:thm-norm-1} and the last inequality uses the definition of the asymptotic norm.

    Next, We show that $||(S\circ\iota^{(i)})(\widehat{\gamma})||_{AL^1}^{(i)}=||\widehat \gamma'||_{L^1}$. 

    First, we can approximate $||\widehat \gamma'||_{L^1}$ with the Riemann sum, and for any $\epsilon>0$, there exists $N>0$ such that when $n>N$, we have
    \begin{equation}\label{eq:thm-norm-riemann}
        \Bigg|\sum_{k=1}^n \bigg|\widehat{\gamma}'(\frac{k}{n})\bigg|\cdot \frac{1}{n}-||\widehat \gamma'||_{L^1}\Bigg|<\epsilon.
    \end{equation}
    Secondly, we assume that the asymptotic norm is achieved by a subsequence $q_n\to +\infty$ such that
    \begin{equation}\label{eq:thm-norm-3}
        ||(S\circ\iota^{(i)})(\widehat{\gamma})||_{AL^1}^{(i)}=\lim_{n\to +\infty}\left(q_n!\cdot \sum_{k=1}^{q_n} ||(S\circ\iota^{(i)})(\widehat{\gamma})||_{k,q_n-k}^{(i)}\right)
    \end{equation}
    We now define the sequence of maps $(p_n,q_n):[0,1]\to \mathbb N\times \mathbb N^*$ as $q_n(s)=q_n$ and $p_n(s)=\floor{sq_n}$ similar to Example \ref{ex:natural-r-approx}. Then $(p_n,q_n)$ is a uniform rational approximation, thus applying Theorem \ref{thm:main}, we have for any $j\neq i$
    \[\sup_{s\in [0,1]}\bigg|x_j'(s)-(q_n+1)!\cdot S_{p_n(s),q_n-p_n(s)}^{(j;i)}(\gamma)\bigg|<\frac{\epsilon}{\sqrt d}.\]
    Implementing at the sample points $s=k/q_n$ for each $k=1,\cdots, q_n$, we have
    \begin{equation}\label{eq:thm-norm-4}
        \bigg|x_j'(k/q_n)-(q_n+1)!\cdot S_{k,q_n-k}^{(j;i)}(\gamma)\bigg|<\frac{\epsilon}{\sqrt d}
    \end{equation}
    holds for all $k=1,\cdots, q_n$ and $j\neq i$. This implies that for any $k=1,\cdots, q_n$, we have
    \begin{align*}
        &\quad\;\Bigg|\big|\widehat{\gamma}'(k/q_n)\big|-(q_n+1)!\cdot ||(S\circ\iota^{(i)})(\widehat{\gamma})||_{k,q_n-k}^{(i)}\Bigg|\\
        &=\Bigg|\big|\widehat{\gamma}'(k/q_n)\big|-(q_n+1)!\cdot \Big|\left(S_{k,q_n-k}^{(1;i)}(\gamma),\cdots, \widehat{S_{k,q_n-k}^{(i;i)}(\gamma)},\cdots,S_{k,q_n-k}^{(d;i)}(\gamma)\right)\Big|\Bigg|\\
        &\leq \Bigg|\widehat{\gamma}'(k/q_n)-(q_n+1)!\cdot \left(S_{k,q_n-k}^{(1;i)}(\gamma),\cdots, \widehat{S_{k,q_n-k}^{(i;i)}(\gamma)},\cdots,S_{k,q_n-k}^{(d;i)}(\gamma)\right)\Bigg|\\
        &\leq \sqrt{d}\cdot\max_{j\neq i} \bigg|x_j'(k/q_n)-(q_n+1)!\cdot S_{k,q_n-k}^{(j;i)}(\gamma)\bigg|\\
        &< \epsilon,
    \end{align*}
where the first equality uses \eqref{eq:thm-norm-1}, the second inequality uses the triangle inequality, and the last inequality uses \eqref{eq:thm-norm-4}. It follows that
\begin{align*}
    &\quad\;\Bigg|\sum_{k=1}^{q_n}\big|\widehat{\gamma}'(k/q_n)\big|\cdot \frac{1}{q_n}-(q_n+1)!\cdot \left(\sum_{k=1}^{q_n}||(S\circ\iota^{(i)})(\widehat{\gamma})||_{k,q_n-k}^{(i)}\right)\cdot \frac{1}{q_n} \Bigg|\\
    &\leq \sum_{k=1}^{q_n}\Bigg|\big|\widehat{\gamma}'(k/q_n)\big|-(q_n+1)!\cdot ||(S\circ\iota^{(i)})(\widehat{\gamma})||_{k,q_n-k}^{(i)}\Bigg|\cdot \frac{1}{q_n}\\
    &<\epsilon.
\end{align*}
Combining this with \eqref{eq:thm-norm-riemann} and \eqref{eq:thm-norm-3}, and let $q_n\to +\infty$, we conclude that
\[||(S\circ\iota^{(i)})(\widehat{\gamma})||_{AL^1}^{(i)}=||\widehat \gamma'||_{L^1}.\]
From the above we can easily see that if an element $a \in S \circ \iota^{(i)}(\cP^1(\R^{d-1})$ with $a = S \circ \iota^{(i)}(\hat \gamma)$ for some curve $\hat \gamma \in \cP^1(\R^{d-1})$ satisfies that $\|a\|_{AS}^{(i)} = 0$, then it must hold that $\|\hat \gamma^\prime\|_{\infty} = 0$, which implies that $\hat \gamma$ must be a constant curve and consequently $a = (1,0,0,\ldots,) \in T((\R^d))$. Similarly, if $\|a\|^{(i)}_{AL^1}=0$, then it must hold that $\|\hat \gamma^\prime\|_{L^1} = L(\hat \gamma) = 0$, that is, $\hat \gamma$ is a constant curve as well and therefore $a = (1,0,0,\ldots,) \in T((\R^d))$. Hence $\|\cdot\|_{AS}^{(i)}$ and $\|\cdot\|_{AL^1}^{(i)}$ are non-degenerate on the image set $S \circ \iota^{(i)}(\cP^1(\R^{d-1})$, so that they induce metrics $d_{AS}(a,b) = \|a - b\|^{(i)}_{AS}$ and $d_{AL^1}(a,b) = \|a - b\|^{(i)}_{AL^1}$
on $S \circ \iota^{(i)}(\cP^1(\R^{d-1})$.

Finally, in view of (2), for $\hat \gamma_1, \hat \gamma_2 \in \cP^1(\R^{d-1})$, we have (recall that $\cL^{(i)}_0(\R^d) = \iota^{(i)}(\cP^1(\R^{d-1}))$)
$$
\|S\circ \iota^{(i)}(\hat \gamma_1 -\hat \gamma_2)\|_{AS}^{(i)} =\|S\circ \iota^{(i)}(\hat \gamma_1) - S \circ \iota^{(i)}(\hat \gamma_2)\|_{AS}^{(i)},
$$
which certainly implies that
\begin{align*}
  \|S\circ \iota^{(i)}(\hat \gamma_1) - S \circ \iota^{(i)}(\hat \gamma_2)\|_{AS}^{(i)} &=   \|S\circ \iota^{(i)}(\hat \gamma_1 -\hat \gamma_2)\|_{AS}^{(i)}\\
  &=\|\hat \gamma_1^\prime - \hat \gamma_2^\prime\|_\infty\\
  &=d_{C^1}(\hat \gamma_1, \hat \gamma_2).
\end{align*}
The same argument also provides that $\|S\circ \iota^{(i)}(\hat \gamma_1) - S \circ \iota^{(i)}(\hat \gamma_2)\|_{AL^1}^{(i)} = \|\hat \gamma_1^\prime - \hat \gamma^\prime_2\|_{L^1} = d_{BV}(\hat \gamma_1, \hat \gamma_2)$. This completes the proof of (3).

Now (4) follows immediately from (3) by noting that $\cL^{(i)}_0(\R^d) = S \circ \iota^{(i)}(\cP^1(\R^{d-1}))$.
\end{proof}

\begin{cor}
 Let $d \ge 2$.  
 Let $\tilde S := S \circ \iota^{(1)}: \cP^1(\R^{d-1}) \to T((\R^d))$ be the ``canonical time-augmented signature'' on $\cP^1(\R^{d-1})$, then 
     $$
     \tilde S: (\cP^1(\R^{d-1}),d_{C^1}) \to (\tilde S(\cP^1(\R^{d-1})),d_{AS})
     $$
     and 
     $$
     \tilde S: (\cP^1(\R^{d-1}),d_{BV}) \to (\tilde S(\cP^1(\R^{d-1})),d_{AL^1})
     $$
     are homeomorphism. In particular, $$\tilde S^{-1}:  (\tilde S(\cP^1(\R^{d-1})),d_{AS}) \to (\cP^1(\R^{d-1}),d_{C^1})$$ and $\tilde S^{-1}:  (\tilde S(\cP^1(\R^{d-1})),d_{AL^1}) \to (\cP^1(\R^{d-1}),d_{BV})$ are continuous.
\end{cor}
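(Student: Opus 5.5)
The corollary follows almost immediately from Theorem \ref{thm:norm}(3), applied with $i=1$, so the plan is to unpack that isometry statement. Set $\tilde S = S\circ\iota^{(1)}$. Theorem \ref{thm:norm}(3) gives, for all $\hat\gamma_1,\hat\gamma_2\in\cP^1(\R^{d-1})$,
\[
d_{AS}\bigl(\tilde S(\hat\gamma_1),\tilde S(\hat\gamma_2)\bigr)=d_{C^1}(\hat\gamma_1,\hat\gamma_2),
\qquad
d_{AL^1}\bigl(\tilde S(\hat\gamma_1),\tilde S(\hat\gamma_2)\bigr)=d_{BV}(\hat\gamma_1,\hat\gamma_2).
\]
These metrics are well defined on the image $\tilde S(\cP^1(\R^{d-1}))$, as shown in that theorem.

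\textbf{Step 1: bijectivity.} I first check that $\tilde S$ is a bijection onto its image. Surjectivity onto the image is trivial. For injectivity there are two routes. One is Theorem \ref{thm: signature is injective on linear paths}, since $\iota^{(1)}$ is injective and lands in $\cL^{(1)}(\R^d)$. The other, more self-contained, uses the isometry directly: if $\tilde S(\hat\gamma_1)=\tilde S(\hat\gamma_2)$, then $d_{C^1}(\hat\gamma_1,\hat\gamma_2)=0$. Hence $\hat\gamma_1'\equiv\hat\gamma_2'$, and the two curves coincide in $\cP^1(\R^{d-1})$, which is taken modulo translation.

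\textbf{Step 2: continuity in both directions.} A distance-preserving bijection between metric spaces is $1$-Lipschitz, and so is its inverse. Therefore $\tilde S$ and $\tilde S^{-1}$ are both continuous, and $\tilde S$ is a homeomorphism for each of the two pairs of metrics. The ``in particular'' statement about the continuity of $\tilde S^{-1}$ is then just the continuity of the inverse of this homeomorphism.

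\textbf{Main obstacle.} The only delicate point is that $d_{AS}$ and $d_{AL^1}$ are genuine finite metrics on the image, rather than $[0,\infty]$-valued semi-metrics. Non-degeneracy and the metric property were verified in Theorem \ref{thm:norm}(3). Finiteness also follows from the isometry, since the right-hand sides $\|\hat\gamma_1'-\hat\gamma_2'\|_\infty$ and $\|\hat\gamma_1'-\hat\gamma_2'\|_{L^1}$ are finite for $C^1$ curves. Given these facts, I expect no further difficulty.
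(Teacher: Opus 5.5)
Your proposal is correct and follows the paper's intended route. The paper gives no separate proof of this corollary: it is an immediate consequence of the isometry in Theorem \ref{thm:norm}(3) with $i=1$. A distance-preserving map onto its image is automatically injective, and it is a homeomorphism with a $1$-Lipschitz inverse, which is exactly what you argue.
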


\section{Effective bounds and Diophantine approximation}\label{sec:effective}

In this section we will express the convergence rate of our signature inversion scheme $\cI$ and the modulus of continuity of the inverse $S^{-1}$ for $x_i$-linear curves in terms of the modulus of continuity of their derivatives.

First, we present an effective convergence bound in Theorem \ref{thm:main} for $x_i$-linear curves with H\"older continuous derivatives.

For each $\alpha\in (0,1]$, we denote $\mathcal L^{(i)}_\alpha= \mathcal L^{(i)}\cap C^{1,\alpha}$, that is,
\[\mathcal L^{(i)}_\alpha=\{\gamma\in \mathcal L^{(i)}\;|\; \gamma' \textrm{ is }\alpha\textrm{-H\"older}\}.\]
As before, for each $\gamma \in \cP^1(\R^d)$, we write $\gamma(s)=\left(x_1(s),\cdots, x_d(s)\right)$. Note that $\gamma'$ being $\alpha$-H\"older implies that each $x_i$-component is $\alpha$-H\"older. We use
$$
\|f\|_{\alpha} := \inf \bigg\{C \ge 0: \sup_{s \neq t \in [0,1]} \frac{|f(t) - f(s)|}{|t-s|^\alpha} \le C  \bigg\}
$$
for the H\"older norm of function $f$.
We have the following quantitative convergence.

\begin{thm}\label{thm:quantitative}
Let $\alpha\in (0,1]$ and $\gamma\in \mathcal L^{(i)}_\alpha(\R^d)$. Then for any $\epsilon_0\in (0,\frac{1}{2})$, any $x\in[0,1]$, any $j\neq i$, and any rational approximation $\frac{p_n}{q_n}\in [0,1]$ of $x$ with $p_n,q_n\in \mathbb N$ that satisfies
\begin{equation}\label{eq:rational approx condition}
   \Bigg|\frac{p_n}{q_n}- x\Bigg|<q_n^{-\frac{1}{2}}\textrm{ and } q_n\to \infty\; \textrm{as }n\rightarrow \infty, 
\end{equation}
we have
\[\Bigg|x_j'(x)-\frac{(q_n+1)!\cdot S^{(j;i)}_{p_n,q_n-p_n}}{(S^{(1)}_i)^{q_n}}\Bigg|\leq C_1\cdot q_n^{(-\frac{1}2+\epsilon_0)\alpha},\]
where $C_1$ is a constant that only depends on $\epsilon_0$, $\alpha$, $||\gamma'||_\infty$, and $\|\gamma^\prime\|_\alpha$.
\end{thm}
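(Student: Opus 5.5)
We reduce to the setting of Theorem \ref{cor:main} exactly as in the proof of Theorem \ref{thm:main}. Write $x_i(s)=C_0 s$ with $C_0=S^{(1)}_i$, and let $\rho_n:=\rho_{p_n,q_n-p_n}$. Proposition \ref{prop:S_kl}, applied to the projection onto the $(x_i,x_j)$-plane, gives
\[
\frac{(q_n+1)!\,S^{(j;i)}_{p_n,q_n-p_n}}{(S^{(1)}_i)^{q_n}}=\int_0^1\rho_n(s)\,x_j'(s)\,ds .
\]
Since $\rho_n$ is a probability density, the error is
\[
E_n:=\int_0^1\rho_n(s)\bigl(x_j'(s)-x_j'(x)\bigr)\,ds .
\]
We split $[0,1]$ into a near region $A_n=\{s:|s-p_n/q_n|<q_n^{-\frac12+\epsilon_0}\}$ and its complement $B_n$. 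We will bound the contribution of each region separately.

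On $A_n$ we use the triangle inequality and the hypothesis \eqref{eq:rational approx condition}. These give
\[
|s-x|\le q_n^{-\frac12+\epsilon_0}+q_n^{-\frac12}\le 2q_n^{-\frac12+\epsilon_0}.
\]
Hence the $\alpha$-H\"older bound yields $|x_j'(s)-x_j'(x)|\le \|\gamma'\|_\alpha\,2^\alpha q_n^{(-\frac12+\epsilon_0)\alpha}$. Integrating against $\rho_n$, whose total mass is at most $1$, bounds the $A_n$ contribution by $2\|\gamma'\|_\alpha\, q_n^{(-\frac12+\epsilon_0)\alpha}$.

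On $B_n$ we use $|x_j'(s)-x_j'(x)|\le 2\|\gamma'\|_\infty$ together with Proposition \ref{prop:key}. The latter, applied with $n=q_n$, $k=p_n$, $l=q_n-p_n$, gives $\rho_n(s)\le 3q_n^{3/2}e^{-\frac1{18}q_n^{2\epsilon_0}}$ on $B_n$ once $q_n>n_0(\epsilon_0)$. Since $B_n$ has length at most $1$, its contribution is at most $6\|\gamma'\|_\infty q_n^{3/2}e^{-\frac1{18}q_n^{2\epsilon_0}}$. This is stretched-exponentially small, so it is bounded by $\|\gamma'\|_\infty\, c(\epsilon_0,\alpha)\,q_n^{(-\frac12+\epsilon_0)\alpha}$ with
\[
c(\epsilon_0,\alpha)=\sup_{q\ge1}6q^{3/2+(\frac12-\epsilon_0)\alpha}e^{-\frac1{18}q^{2\epsilon_0}}<\infty .
\]
Summing the two contributions gives the claim with $C_1=2\|\gamma'\|_\alpha+c(\epsilon_0,\alpha)\|\gamma'\|_\infty$.

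The main obstacle is the finitely many $n$ with $q_n\le n_0(\epsilon_0)$, where Proposition \ref{prop:key} is not available. There I would use the trivial bound $|E_n|\le 2\|\gamma'\|_\infty$ and enlarge $C_1$ by the factor $n_0^{(\frac12-\epsilon_0)\alpha}$; this keeps $C_1$ depending only on $\epsilon_0$, $\alpha$ and the two norms. Alternatively, I could simply state the bound for large $n$, as in Theorem \ref{thm:effective bound on inversion}. One point needs checking: $n_0$ in Proposition \ref{prop:key} comes from the $\lesssim$ steps and Lemma \ref{lem:rhomax}, and I must confirm it depends only on $\epsilon_0$ and not on $k$. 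This holds because the bounds in that proof are uniform in $r$.
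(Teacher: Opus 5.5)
Your proof is correct and follows the paper's route: the paper also reduces, via Proposition \ref{prop:S_kl}, to the estimate of Theorem \ref{thm:quantitative-convergence}. That estimate splits the integral into a H\"older-controlled near region and a far region controlled by Proposition \ref{prop:key}, just as you do. Your differences are minor refinements. You center the split at $p_n/q_n$ rather than at $x$ with radius $2q_n^{-\frac12+\epsilon_0}$, and you give an explicit supremum constant. You also treat the finitely many $n$ with small $q_n$; the paper only claims the bound for sufficiently large $n$.
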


The rational approximation stated above always exists, and it is closely related to the Diophantine approximation in number theory.
For any rational point $x=\frac{p}{q}$ where $p,q\in \mathbb N$, we can simply take the integer pair $(p_n,q_n)=(pn,qn)$. For any irrational $x\in [0,1]$, for example, one can take the naive approximation $p_n=\floor{nx}, q_n=n$, and the resulting pairs $(p_n,q_n)$ satisfy
    \[\Bigg|\frac{p_n}{q_n}-x\Bigg|< \frac{1}{q_n},\]
which certainly satisfies the condition of Theorem \ref{thm:quantitative}. Using the decimal approximation, one can obtain a similar bound as above. Moreover, according to the Dirichlet's approximation theorem \cite{Dirichlet1842}, there exists a sequence of integer pairs $(p_n,q_n)$ such that
    \[\Bigg|\frac{p_n}{q_n}-x\Bigg|< \frac{1}{q_n^2}.\]
The best Diophantine approximation can be obtained using the continued fraction of $x$. For any real number $x\in [0,1]$, it can be uniquely expressed by the (simple) continued fraction $(a_0;a_1,a_2,\cdots,a_n,\cdots)$ with $a_0 \in \N$ and $a_i\in \N^*$ for $i\geq 1$, that is, we have
\[x=a_0+\frac{1}{a_1+\frac{1}{a_2+\cdots+\frac{1}{a_n+\cdots}}}.\]
If we denote the $n$-th truncation by
\[\frac{p_n}{q_n}=a_0+\frac{1}{a_1+\frac{1}{a_2+\cdots+\frac{1}{a_n}}},\]
then it satisfies the Dirichlet principle that
 \[\Bigg|\frac{p_n}{q_n}-x\Bigg|< \frac{1}{q_n^2}.\]
See for example \cite[Section 3.1]{Einsiedler-Ward:ErgodicTheory}. 
We summarize that the rational approximation condition \eqref{eq:rational approx condition} in Theorem \ref{thm:quantitative} is a very loose condition, and all the methods given above (and many others) will satisfy it.

The main idea of the proof of Theorem \ref{thm:quantitative} lies in the following quantitative improvement of Theorem \ref{thm:convergence}.

\begin{thm}\label{thm:quantitative-convergence}
Let $\alpha\in (0,1]$. For any $\alpha$-H\"older continuous function $f$ on $[0,1]$, any $\epsilon_0\in (0,\frac{1}2)$, any $x\in[0,1]$, and any rational approximation $\frac{p_n}{q_n}\in [0,1]$ of $x$ with $p_n,q_n\in \mathbb N$ that satisfies
\[\Bigg|\frac{p_n}{q_n}- x\Bigg|<q_n^{-\frac{1}{2}}\textrm{ and } q_n\to +\infty\; \textrm{as }n\rightarrow \infty, \]
we have
    \[\Bigg|\int_{0}^1\rho_{p_n,q_n-p_n}(s)f(s)ds-f(x)\Bigg|\leq C_1\cdot q_n^{(-\frac{1}2+\epsilon_0)\alpha},\]
where $C_1$ is a constant that only depends on $\epsilon_0$, $\alpha$, $||f||_\infty$, and $\|f\|_\alpha$.
\end{thm}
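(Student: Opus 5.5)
We follow the proof of Theorem \ref{thm:convergence}, replacing the uniform-continuity parameter $\delta_0$ by an explicit $n$-dependent window. Write $k=p_n$, $l=q_n-p_n$, $N=q_n$. Since $\rho_{k,l}$ is a probability density on $[0,1]$, we have
\[
\Bigg|\int_0^1\rho_{k,l}(s)f(s)\,ds-f(x)\Bigg|\le \int_0^1\rho_{k,l}(s)|f(s)-f(x)|\,ds .
\]
We split $[0,1]$ into the near region $A=\{s:|s-\frac{k}{N}|<N^{-\frac12+\epsilon_0}\}$ and its complement $A^c$.

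\textbf{Near region.} On $A$ the triangle inequality and \eqref{eq:rational approx condition} give
\[
|s-x|\le |s-\tfrac{k}{N}|+|\tfrac{k}{N}-x|< N^{-\frac12+\epsilon_0}+N^{-\frac12}\le 2N^{-\frac12+\epsilon_0}.
\]
Hence $|f(s)-f(x)|\le \|f\|_\alpha 2^\alpha N^{(-\frac12+\epsilon_0)\alpha}$ on $A$. Integrating against $\rho_{k,l}$, whose total mass is at most $1$, bounds the contribution of $A$ by $2\|f\|_\alpha N^{(-\frac12+\epsilon_0)\alpha}$.

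\textbf{Far region.} On $A^c$ we apply Proposition \ref{prop:key} with the given $\epsilon_0$. For $N>n_0(\epsilon_0)$ it gives the pointwise bound $\rho_{k,l}(s)\le 3N^{3/2}e^{-\frac1{18}N^{2\epsilon_0}}$. Together with $|f(s)-f(x)|\le 2\|f\|_\infty$ and $|A^c|\le1$, the contribution of $A^c$ is at most $6\|f\|_\infty N^{3/2}e^{-\frac1{18}N^{2\epsilon_0}}$.

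This term decays faster than any power of $N$. Consequently $N^{3/2}e^{-\frac1{18}N^{2\epsilon_0}}\le C(\epsilon_0,\alpha)\,N^{(-\frac12+\epsilon_0)\alpha}$ for all $N\ge1$, with $C(\epsilon_0,\alpha)=\sup_{N\ge1}N^{\frac32+(\frac12-\epsilon_0)\alpha}e^{-\frac1{18}N^{2\epsilon_0}}<\infty$. Adding the two regions gives the claim with $C_1=2\|f\|_\alpha+6\|f\|_\infty C(\epsilon_0,\alpha)$, valid for all $n$ with $q_n>n_0(\epsilon_0)$.

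\textbf{The remaining step.} The only delicate point is the finitely many $n$ with $q_n\le n_0(\epsilon_0)$, where Proposition \ref{prop:key} does not apply. There we use the trivial bound $2\|f\|_\infty\le 2\|f\|_\infty n_0^{(\frac12-\epsilon_0)\alpha}q_n^{(-\frac12+\epsilon_0)\alpha}$. Since $n_0$ depends only on $\epsilon_0$, enlarging $C_1$ by this factor keeps $C_1$ depending only on $\epsilon_0,\alpha,\|f\|_\infty,\|f\|_\alpha$, and the bound then holds for every $n$.
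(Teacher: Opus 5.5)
Your proof is correct and follows the same route as the paper. You split into a window of size comparable to $q_n^{-\frac12+\epsilon_0}$, where H\"older continuity gives the main term, and its complement, where Proposition \ref{prop:key} gives super-polynomial decay. The differences are cosmetic. You center the window at $p_n/q_n$ rather than at $x$, so Proposition \ref{prop:key} applies to the far region directly. You also absorb the finitely many $q_n\le n_0(\epsilon_0)$ into the constant, so your bound holds for every $n$ rather than only for large $n$ as in the paper.
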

\begin{proof}
    The proof is a refined estimate of that in Theorem \ref{thm:convergence}. We set $M=||f||_\infty+1>0$. 

    By applying Proposition \ref{prop:key} for a fixed $\epsilon_0 \in (0,\frac{1}{2})$, $k_n=p_n$, $l_n=q_n-p_n$, we see that there exists $q_0 \in \N^*$ such that if $q_n>q_0$, and $|s-\frac{p_n}{q_n}|\geq q_n^{-\frac{1}2+\epsilon_0}$, then we have
    \begin{equation}\label{eq:thm-quant-conv-1}
        \rho_{p_n,q_n-p_n}(s)\leq 3q_n^{3/2}\exp\left(-\frac{1}{18}q_n^{2\epsilon_0}\right).
    \end{equation}
    Since $q_n\to +\infty$ and $|\frac{p_n}{q_n}- x|<q_n^{-\frac{1}{2}}$ as $n \to \infty$ by our hypothesis, there exists $n_0 \in \N^*$ such that when $n>n_0$, we have $q_n>q_0$ and $|\frac{p_n}{q_n}-x|<q_n^{-\frac{1}{2}+\epsilon_0}$. On the other hand, we note that the bound
    \[3q_n^{3/2}\exp\left(-\frac{1}{18}q_n^{2\epsilon_0}\right)\lesssim q_n^{-\beta}\]
    holds for any $\beta>0$, and in particular if we choose $\beta=\alpha(\frac{1}2-\epsilon_0)$, which together with \eqref{eq:thm-quant-conv-1} gives that there is some $n_1 \in \N^*$ such that for all $n > n_1$, we have
    \begin{equation}\label{eq:thm-quant-conv-2}
        \rho_{p_n,q_n-p_n}(s)\leq q_n^{-\alpha(\frac{1}{2}-\epsilon_0)},
    \end{equation}
    as long as $|s - \frac{p_n}{q_n}| \ge q_n^{-\frac{1}{2}+\epsilon_0}$.
    Furthermore, if we set $\delta_n:=2q_n^{-\frac{1}{2}+\epsilon_0}$, we have by triangle inequality that for all $n > n_1$ and for all $s \in [0,1]$ with $|s-x|\ge \delta_n$,
   \begin{equation}\label{eq:thm-quant-conv-3}
       \Bigg|s-\frac{p_n}{q_n}\Bigg|\geq \Bigg|s-x\Bigg|-\Bigg|x-\frac{p_n}{q_n}\Bigg|\geq \delta_n - \frac{\delta_n}{2}\geq q_n^{-\frac{1}{2}+\epsilon_0},
   \end{equation} 
   so that the bound \eqref{eq:thm-quant-conv-2} holds for all such $s$.
    
     Now we can give the following estimates for any $n>n_1$ that
    \begin{align*}
        \Bigg|\int_{0}^1\rho_{k_n,l_n}(s)f(s)ds-f(x)\Bigg|&\leq \int_{0}^1\rho_{k_n,l_n}(s)\Big|f(s)-f(x)\Big|ds\\
        &=\int_{|s-x|<\delta_n}\rho_{k_n,l_n}(s)\Big|f(s)-f(x)\Big|ds\\
        &\quad\quad+\int_{|s-x|\geq\delta_n}\rho_{k_n,l_n}(s)\Big|f(s)-f(x)\Big|ds\\
        &<\|f\|_\alpha \delta_n^\alpha \int_{|s-x|<\delta_n}\rho_{k_n,l_n}(s)ds \\
        &\quad \quad +2M \int_{|s-x|\geq \delta_n}\rho_{k_n,l_n}(s)ds\\
        &<\|f\|_\alpha \delta_n^\alpha + 2Mq_n^{-\alpha(\frac{1}{2}-\epsilon_0)}\\
        &\leq C_1\cdot q_n^{(-\frac{1}2+\epsilon_0)\alpha},
    \end{align*}
    where we used the H\"older continuity of $f$ to get the second inequality, the bound \eqref{eq:thm-quant-conv-2} in the third inequality and the assumption $\delta_n = 2q_n^{-\frac{1}{2}+\epsilon_0}$ in the last one. Clearly $C_1$ only depends on $\epsilon_0$, $\alpha$, $||f||_\infty$, and $\|f\|_\alpha$.
\end{proof}

Now we present the proof of Theorem \ref{thm:quantitative}.
\begin{proof}
    Similar to the proof of Theorem \ref{thm:main}, we may assume without loss of generality that $\gamma$ is $x_1$-increasing, and moreover by projecting the curve to the $(x_1,x_j)$-coordinate, we may assume the projected curve is of the form
    \[\gamma'(s)=\left(C_0s, y(s)\right),\]
    where $C_0=S^{(1)}_1$ and $y(s)=x_j'(s)$ which by assumption is $\alpha$-H\"older. Apply Theorem \ref{thm:quantitative-convergence} for the $\alpha$-H\"older function $y(s)$, we obtain that for any $\epsilon_0\in (0,\frac{1}2)$, any $x\in[0,1]$, and any rational approximation $\frac{p_n}{q_n}\in [0,1]$ of $x$ with $p_n,q_n\in \mathbb N$ that satisfies
\[\Bigg|\frac{p_n}{q_n}- x\Bigg|<q_n^{-\frac{1}{2}}\textrm{ and } q_n\to +\infty\; \textrm{as }n\rightarrow +\infty, \]
we have
    \[\Bigg|\int_{0}^1\rho_{p_n,q_n-p_n}(s)y(s)ds-y(x)\Bigg|\leq C_1\cdot q_n^{(-\frac{1}2+\epsilon_0)\alpha},\]
    where $C_1$ is a constant that depends on $\epsilon_0$, $\alpha$, $||\gamma'||_\infty$, and the multiplicative constant of $\gamma'$ in the $\alpha$-H\"older condition.
    Finally, using Proposition \ref{prop:S_kl} and Definition \ref{def:rho}, the result follows.
\end{proof}

Finally, we give an effective convergence bound to Corollary \ref{cor:S-inv-uniform}, which gives a quantitative description of the signature inverse $S^{-1}$ on the image of some equicontinuous sets in $\cL^{(i)}_\alpha(\R^d)$ with respect to the projective norm $\|\cdot\|_{proj}$ on the target space and the $d_{C^1}$-metric on the domain. 
 Here we define for each $K>0$, the following subset
\[\mathcal L^{(i)}_{\alpha,K}(\R^d)=\{\gamma\in \mathcal L^{(i)}:\max\{\|\gamma^\prime\|_\infty, \|\gamma^\prime\|_\alpha\} \le K\}.\]
By the Arzela-Ascoli theorem, for any $K>0$, the set $\mathcal L^{(i)}_{\alpha,K}(\R^d)$ is relatively compact in $\mathcal L^{(i)}_{\alpha}(\R^d)$. Hence, when restricted to this set $\mathcal L^{(i)}_{\alpha,K}(\R^d)$, the signature map $S$ is a homeomorphism onto its image (See Remark \ref{remark:homeo}). In particular, $S^{-1}$ is continuous on this image, when we use the projective norm on the image. The following theorem gives a quantitative estimate on the modulus of continuity of $S^{-1}$.

\begin{thm}\label{thm:mod-cont-S-inverse}
    Let $\alpha\in (0,1]$, $K>0$, $\epsilon_0\in (0,\frac{1}2)$, and $\gamma\in \mathcal L^{(i)}_{\alpha,K}$.
    Denote $G^{(i)}_{\alpha,K} = S(\mathcal L^{(i)}_{\alpha,K}(\R^d))$ the image of $\mathcal L^{(i)}_{\alpha,K}(\R^d)$ under the signature map $S$. Then for any $\epsilon\in (0,1)$, there exists $\delta(\epsilon)>0$, which can be explicitly chosen as
    \[\delta<\frac{\overline C_2}{\sqrt d}\frac{(S^{(1)}_i(\gamma))^n}{(n+1)!} \epsilon\;\textrm{ and }\;n>\overline C_1\cdot \epsilon^{\frac{1}{\left(-\frac{1}2+\epsilon_0\right)\alpha}},\]
where $\overline C_1$ depends on $\epsilon_0$, $\alpha$, and $K$, and $\overline C_2$ depends on $K$ and $S^{(1)}_i(\gamma)$, such that 
    \[d_{C^1}(S^{-1}(\overline S), S^{-1}(S(\gamma))<\epsilon\]
    whenever $\overline S\in G^{(i)}_{\alpha,K}$ satisfies
    $||\overline S-S(\gamma)||_{proj}<\delta$.
\end{thm}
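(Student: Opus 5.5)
The plan is a triangle inequality at a single well-chosen level $n$, combining the quantitative inversion Theorem \ref{thm:quantitative} (applied to both curves) with the coefficient bound of Proposition \ref{prop:tensor-ineq}. Write $\overline S = S(\overline\gamma)$ with $\overline\gamma\in\mathcal L^{(i)}_{\alpha,K}$. By the reduction used in the proof of Theorem \ref{thm:main}, I would work with the $x_i$-coordinate $C_0 s$ with $C_0 = S^{(1)}_i(\gamma)$, and with the other derivatives $x_j'$, $j\neq i$.

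First I need the two curves to share the same linear speed. Since $|\langle e_i,\overline S - S(\gamma)\rangle|\le \|\overline S-S(\gamma)\|_{proj}<\delta$, the constants $\overline C_0 = S^{(1)}_i(\overline\gamma)$ and $C_0$ differ by at most $\delta$. This gives $|\overline x_i' - x_i'|<\delta\le\epsilon/2$ once $\overline C_2$ is small. If one prefers to keep the normalization $(S^{(1)}_i(\gamma))^{n}$ in both inversion formulas, this discrepancy is absorbed by writing $(\overline C_0/C_0)^n = 1+O(n\delta/C_0)$. This term is harmless because $\delta$ carries the factor $C_0^n/(n+1)!$, which is far smaller than $1/n$.

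Fix $x\in[0,1]$ and $j\ne i$. Choose the level $n$ as in the statement and take $p=\floor{nx}$, so that $|p/n - x|<1/n<n^{-1/2}$ and Theorem \ref{thm:quantitative} applies. To be careful, I would use the explicit form of its proof, namely Theorem \ref{thm:quantitative-convergence} with $\delta_n=2n^{-1/2+\epsilon_0}$. This gives a bound valid for all $n$ beyond a threshold depending only on $\epsilon_0,\alpha,K$, with constant $C_1=C_1(\epsilon_0,\alpha,K)$ uniform over $\mathcal L^{(i)}_{\alpha,K}$. Uniformity is the point to check: the threshold from Proposition \ref{prop:key} and the decay $3n^{3/2}e^{-n^{2\epsilon_0}/18}\le n^{-\alpha(1/2-\epsilon_0)}$ are independent of the curve, while $\|f\|_\infty$ and $\|f\|_\alpha$ are bounded by $K$. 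Hence for both $\gamma$ and $\overline\gamma$,
\[\Bigl|x_j'(x)-\tfrac{(n+1)!}{C_0^{n}}S^{(j;i)}_{p,n-p}\Bigr|\le C_1 n^{(-\frac12+\epsilon_0)\alpha}\le \epsilon/3,\]
where the last inequality holds once $n>\overline C_1\epsilon^{1/((-\frac12+\epsilon_0)\alpha)}$, with $\overline C_1$ absorbing $(3C_1)^{1/((\frac12-\epsilon_0)\alpha)}$ and the threshold. (For $\overline\gamma$ one uses $\overline C_0$ here, corrected by the step above.)

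The middle term is then bounded by Proposition \ref{prop:tensor-ineq}:
\[\tfrac{(n+1)!}{C_0^n}|S^{(j;i)}_{p,n-p}(\overline\gamma)-S^{(j;i)}_{p,n-p}(\gamma)|\le \tfrac{(n+1)!}{C_0^n}\delta<\overline C_2\epsilon/\sqrt d.\]
Summing the three pieces gives $|\overline x_j'(x)-x_j'(x)|\le(2/3+\overline C_2/\sqrt d)\epsilon$ for each $j$. Taking the Euclidean norm over the $d$ coordinates, with the $\sqrt d$ factor accounting for the $d-1$ coordinates $j\ne i$, and choosing $\overline C_2$ and the $\epsilon/3$ splits appropriately, gives $|\overline\gamma'(x)-\gamma'(x)|<\epsilon$. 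Since $x$ was arbitrary, $d_{C^1}<\epsilon$.

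The main obstacle is bookkeeping rather than a new idea. The constants in Theorem \ref{thm:quantitative} must be uniform over the class. The $\sqrt d$ aggregation of the errors in the $d-1$ coordinates has to be arranged so that the total stays below $\epsilon$. The mismatch between $S^{(1)}_i(\overline\gamma)$ and $S^{(1)}_i(\gamma)$ must be shown to be negligible at level $n$.
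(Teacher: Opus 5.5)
Your proposal is correct and is essentially the paper's argument: a three-term triangle inequality at a single level $n$ with $p=\floor{nx}$, Theorem \ref{thm:quantitative-convergence} applied to both curves with constants uniform over $\mathcal L^{(i)}_{\alpha,K}$, Proposition \ref{prop:tensor-ineq} for the middle term, and $|\overline S^{(1)}_i-S^{(1)}_i|<\delta$ for both the $i$-th coordinate and the normalization mismatch. The only difference is cosmetic: you absorb the mismatch multiplicatively via $(\overline C_0/C_0)^n=1+O(n\delta/C_0)$, which works because $\frac{(n+1)!}{\overline C_0^{n}}\overline S^{(j;i)}_{p,n-p}=\int_0^1\rho_{p,n-p}\,\overline x_j'\,ds$ is bounded by $K$, whereas the paper splits it off by add-and-subtract with a telescoping bound on $\overline S_i^n-S_i^n$; as you anticipate, the splits must be $\epsilon/(3\sqrt d)$ rather than $\epsilon/3$, exactly as in the paper.
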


\begin{proof}
    For a given $\overline S \in G^{(i)}_{\alpha,K}$, we write it as $\overline S = S(\overline \gamma)$ for some $\overline \gamma \in \cL^{(i)}_{\alpha,K}(\R^d)$. 
    We write $\gamma=(x_1(\cdot),\cdots, x_d(\cdot))$ and $\overline \gamma=(\overline x_1(\cdot),\cdots,\overline x_d(\cdot))$. We write $\overline S^{(j;i)}_{k,l} = \langle e_i^{\otimes k} \otimes e_j \otimes e_i^{\otimes l}, \overline S \rangle$ and $\overline S^{(1)}_i = \langle e_i, \overline S\rangle$.
    
    For any $x\in [0,1]$, for simplicity we choose the naive approximation $p_n=\floor{nx}, q_n=n$, then it satisfies the condition of Theorem \ref{thm:quantitative-convergence}, thus applying the theorem for both $x_j'(x)$ and $\overline{x}_j'(x)$, we have for each $j\neq i$ that
    \begin{equation*}
        \Bigg|x_j'(x)-\frac{(n+1)!\cdot S^{(j;i)}_{p_n,n-p_n}}{(S^{(1)}_i)^{n}}\Bigg|\leq C_1\cdot n^{(-\frac{1}2+\epsilon_0)\alpha},
    \end{equation*}
    and
    \begin{equation*}
        \Bigg|\overline x_j'(x)-\frac{(n+1)!\cdot \overline S^{(j;i)}_{p_n,n-p_n}}{(\overline S^{(1)}_i)^{n}}\Bigg|\leq C_1\cdot n^{(-\frac{1}2+\epsilon_0)\alpha}.
    \end{equation*}
     Note that since both $\gamma,\overline \gamma$ belong to $\mathcal L^{(i)}_{\alpha,K}(\R^d)$, the above two inequalities share the same constant $C_1$, which depends only on $\epsilon_0$, $\alpha$, and $K$. Now for any $\epsilon\in (0,1)$, by choosing
    \begin{equation}\label{eq:thm-modcts-n}
        n>\left(\frac{\epsilon}{3\sqrt d\cdot C_1}\right)^{\frac{1}{\left(-\frac{1}2+\epsilon_0\right)\alpha}},
    \end{equation}
    we have both
    \begin{equation}\label{eq:thm-modcts-1}
        \Bigg|x_j'(x)-\frac{(n+1)!\cdot S^{(j;i)}_{p_n,n-p_n}}{(S^{(1)}_i)^{n}}\Bigg|\leq \frac{\epsilon}{3\sqrt d},
    \end{equation}
    and
    \begin{equation}\label{eq:thm-modcts-2}
        \Bigg|\overline x_j'(x)-\frac{(n+1)!\cdot \overline S^{(j;i)}_{p_n,n-p_n}}{(\overline S^{(1)}_i)^{n}}\Bigg|\leq \frac{\epsilon}{3\sqrt d}.
    \end{equation}
    Next, we estimate (for simplicity, we write $S_i$ for $S^{(1)}_i$ and $\overline S_i$ for $\overline S^{(1)}_i$ below)
    \begin{align}\label{eq:thm-modcts-3}
    \begin{split}
        &\quad \;\Bigg|\frac{(n+1)!\cdot S^{(j;i)}_{p_n,n-p_n}}{S_i^{n}}-\frac{(n+1)!\cdot \overline S^{(j;i)}_{p_n,n-p_n}}{\overline S_i^{n}}\Bigg|\\
        &=(n+1)!\cdot \Bigg|\frac{S^{(j;i)}_{p_n,n-p_n}\overline S_i^{n}-\overline S^{(j;i)}_{p_n,n-p_n}\overline S_i^{n}+\overline S^{(j;i)}_{p_n,n-p_n}\overline S_i^{n}-\overline S^{(j;i)}_{p_n,n-p_n}S_i^{n}}{S_i^n\cdot \overline S_i^n}\Bigg|\\
        &\leq (n+1)!\cdot \left(\frac{1}{ S_i^n}|\overline S^{(j;i)}_{p_n,n-p_n}-S^{(j;i)}_{p_n,n-p_n}|+\frac{|\overline S^{(j;i)}_{p_n,n-p_n}|\cdot |\overline S_i^{n-1}+\overline S_i^{n-2}S_i+\cdots+S_i^{n-1}|}{S_i^n\overline S_i^n}|\overline S_i-S_i|\right)\\
        &\leq \left(\frac{(n+1)!}{S_i^n}+\frac{K^{n+1}}{S_i^n\overline S_i^n}\cdot nK^{n}\right)\delta,
        \end{split}
    \end{align}
    where the last inequality uses Proposition \ref{prop:tensor-ineq} together with the following apriori estimates
    \[ \big|\overline S_{k,n-k}^{(j;i)}\big|\leq \frac{K^{n+1}}{(n+1)!},\quad S_i\leq K, \quad \overline S_i\leq K.\]
    If we choose $\delta$ small enough, say $\delta<\frac{S_i}2$, then it implies that
    $$\overline S_i\geq S_i-\big|\overline S_i- S_i\big|\geq S_i-\delta \geq \frac{S_i}2.$$
    Then for each fixed $K$ and $S_i$, as $n\to \infty$, the term $(n+1)!$ will dominate the growth as in \eqref{eq:thm-modcts-3}, so it further simplifies to
    \begin{align*}
        &\quad \;\Bigg|\frac{(n+1)!\cdot S^{(j;i)}_{p_n,n-p_n}}{S_i^{n}}-\frac{(n+1)!\cdot \overline S^{(j;i)}_{p_n,n-p_n}}{\overline S_i^{n}}\Bigg|\\
        &\leq \left(\frac{(n+1)!}{S_i^n}+\frac{K^{n+1}}{S_i^n\overline S_i^n}\cdot nK^{n}\right)\delta\\
        &<C_2\cdot \frac{(n+1)!}{S_i^n}\cdot \delta,        \end{align*}
    where $C_2$ is a constant depending on $K$ and $S_i$. Hence, by choosing $$\delta< \frac{1}{3C_2\sqrt d}\frac{S_i^n}{(n+1)!}\cdot \epsilon$$ (also choosing $C_2$ properly will guarantee $\delta<\frac{S_i}{2}$), we have
\begin{equation}\label{eq:thm-modcts-4}
    \Bigg|\frac{(n+1)!\cdot S^{(j;i)}_{p_n,n-p_n}}{S_i^{n}}-\frac{(n+1)!\cdot \overline S^{(j;i)}_{p_n,n-p_n}}{\overline S_i^{n}}\Bigg|<\frac{\epsilon}{3\sqrt d},
\end{equation}
as long as $\|\overline S - S(\gamma)\|_{proj} \le \delta$. 
Putting together with \eqref{eq:thm-modcts-1} and \eqref{eq:thm-modcts-2}, we have by the triangle inequality that
\[|x_j'(x)-\overline x_j'(x)|< \frac{\epsilon}{\sqrt d}, \quad \forall j\neq i,\;\forall x\in [0,1],\]
whenever $\|\overline S - S(\gamma)\|_{proj} \le \delta$ and $\delta$ is chosen to be
\[\delta<\frac{\overline C_2}{\sqrt d}\frac{S_i^n}{(n+1)!} \epsilon,\]
and 
\[n>\overline C_1\cdot \epsilon^{\frac{1}{\left(-\frac{1}2+\epsilon_0\right)\alpha}},\]
where $\overline C_1$ is a constant depending on $\epsilon_0$, $\alpha$, and $K$, and $\overline C_2$ depends on $K$ and $S_i$. We can also choose $\overline C_2$ properly so that $\delta<\frac{\epsilon}{\sqrt d}$ holds automatically.

Finally, since 
$$\big|\overline x_i'(x)-x_i'(x)\big|=\big|\overline S_i-S_i\big|\leq ||S(\overline \gamma)-S(\gamma)||_{proj}\le \delta<\frac{\epsilon}{\sqrt d},$$
we obtain
\begin{align*}
    \sup_{x\in [0,1]}|\overline \gamma'(x)-\gamma'(x)|&=\sup_{x\in [0,1]}\left(\sum_{j=1}^d \big|\overline x_j'(x)-x_j'(x)\big|^2\right)^{\frac{1}2}\\
    &\leq \left(\sup_{x\in [0,1]}\sup_{j\in \{1,\cdots, d\}}\big|\overline x_j'(x)-x_j'(x)\big|\right)\cdot \sqrt d\\
    &<\epsilon,
\end{align*}
which is exactly $d_{C^1}(S^{-1}(\overline S), S^{-1}(S(\gamma)) < \epsilon$, as claimed.
\end{proof}

\begin{remark}
 The above result may suggest that the signature inverse $S^{-1}$ may have a ``bad'' modulus of continuity when the image set is endowed with the projective norm, although the signature mapping $S$ is locally Lipschitz for $\tau_{proj}$, see Theorem \ref{thm:cont-of-S}.
\end{remark}

\bibliographystyle{alpha}
\bibliography{myref}
\vskip 0.5cm
\footnotesize{
CL: ShanghaiTech University, Pudong, Shanghai, China. E-mail: \verb|liuchong@shanghaitech.edu.cn|}
{SW: ShanghaiTech University, Pudong, Shanghai, China. E-mail: \verb|wangshi@shanghaitech.edu.cn|}
\end{document}